\newcommand{\xn}{(x_n)_{n \in \mathbb{N}} }
\newcommand{\Mod}[1]{\ (\mathrm{mod}\ #1)}
\DeclareMathOperator{\R}{\mathbb{R}}
\DeclareMathOperator{\N}{\mathbb{N}}
\DeclareMathOperator{\Q}{\mathbb{Q}}
\DeclareMathOperator{\U}{Unif}
\DeclareMathOperator*{\argmax}{arg\,max}
\DeclareMathOperator{\Var}{Var}
\newtheorem{thm}{Theorem}
\newtheorem{lem}{Lemma}[section]
\newtheorem{cor}[thm]{Corollary}
\newtheorem{df}[lem]{Definition}
\newtheorem{proposition}[lem]{Proposition}
\newtheorem{rem}[lem]{Remark}
\newtheorem*{rem*}{Remark}
\newcommand{\ql}{q_{\ell}}
\title{\bf On the metric upper density of Birkhoff sums for irrational rotations}
\author{Lorenz Fr\"uhwirth and Manuel Hauke}
\date{}
\begin{document}

\maketitle

\begin{abstract}
    This article examines the value distribution of $S_{N}(f, \alpha) := \sum_{n=1}^N f(n\alpha)$ for almost every $\alpha$ where $N \in \mathbb{N}$ is ranging over a long interval and $f$ is a $1$-periodic function with discontinuities or logarithmic singularities at rational numbers.
    We show that for $N$ in a set of positive upper density, the order of $S_{N}(f, \alpha)$ is of Khintchine-type, unless the logarithmic singularity is symmetric. Additionally, we show the asymptotic sharpness of the Denjoy\,--\,Koksma inequality for such $f$, with applications in the theory of numerical integration. Our method also leads to a generalized form of the classical Borel-Bernstein Theorem that allows very general modularity conditions.
\end{abstract}

\section{Introduction and main results}

Let $f : \R \to \mathbb{R}$ be $1$-periodic with 
$\int_{[0,1)} |f(x)|\, \mathrm{d}x  < \infty $
and $q \in \mathbb{R}$.
In this article, the object of our interest is \[S_N(f,\alpha,q) := \sum_{n=1}^N f( n \alpha  + q) - N\int_{[0,1)} f(x)\, \mathrm{d}x,\]

which is known as a Birkhoff sum of the irrational circle rotation.
We consider the temporal value distribution along a single orbit of $S_N(f,\alpha,q)$, that is, we fix some initial point $q$ and a rotation parameter $\alpha$, and examine the value distribution of $\{S_N(f,\alpha,q): 1 \leq N \leq M\}$, as $M \to \infty$ for (Lebesgue-) almost every $\alpha$.\\
Since the irrational rotation together with the Lebesgue measure is an ergodic system for all irrational $\alpha$, Birkhoff's ergodic theorem implies that for $1$-periodic $f \in L^1([0,1))$ and almost every $q$, we have
$\lvert S_N(f, \alpha,q)\rvert = o(N).$
If the Fourier coefficients of $f \sim \sum_{n \in \mathbb{Z}} c_n e(nx)$ decay at rate $c_n = O(1/n^2)$
(which holds in particular for $f \in C^2$), then $S_N(f,\alpha,q)$ is bounded for almost every $\alpha$ and all $q \in \mathbb{R}$ (see \cite{quenched,herman}). Thus, the interesting functions to consider are the functions that lack smoothness, in particular functions that have discontinuities or singularities.
\\

In this article, we examine $S_{N}(f,\alpha,q)$ where $q \in \mathbb{Q}$ and all
non-smooth points of $f$ lie at rational numbers. The first class of functions we define is the following (compare to, e.g., \cite{dol_sar_no,quenched}).

\begin{df}[Piecewise smooth functions with rational discontinuities]\label{def_fct1}
    We call a $1$-periodic function $f: \R \to \mathbb{R}$ a
    \textit{piecewise smooth function with rational discontinuities} if there exist $ \nu \geq 1$ and $ 0 \leq x_1 < \ldots < x_{ \nu} < 1$ with $x_i \in \mathbb{Q}, 1 \leq i \leq \nu$  such that the following properties hold:
    \begin{itemize}
        \item $f$ is differentiable on $[0,1)\setminus \{x_1,\ldots,x_{\nu}\}$.
        \item $f'_{\big|[0,1)}$ extends to a function of bounded variation on $[0,1)$.
        \item There exists an $i \in \{1,\ldots,\nu\}$ such that $\lim_{ \delta \rightarrow 0} \left[ f(x_i - \delta) - f(x_i+ \delta) \right] \neq 0$.
    \end{itemize}
\end{df}

This class of functions contains several important representatives such as the sawtooth function $f(x) = \{x\} - 1/2$ 
and the local discrepancy functions with rational endpoints $f(x) = \mathbb{1}_{[a,b]}(\{x\}) - (b-a), a,b \in \mathbb{Q}$. These functions are not only of interest in Discrepancy theory (see \cite{beck2,beck3,schmidt}), but are closely related to the theory of ``deterministic random walks'' (see, e.g., \cite{aake,addo}).

For these local discrepancy functions, a classical theorem of Kesten \cite{kesten_2} shows that $\lvert S_N(f,\alpha,q) \rvert$ is unbounded, since $b - a \notin \mathbb{Z} + \alpha\mathbb{Z}$ for irrational $\alpha$.
In addition to considering essentially smooth $f$, we also examine functions with logarithmic singularities at rational numbers, a class of functions that falls in the framework considered in \cite{dol_sar}.

\begin{df}[Smooth functions with rational logarithmic singularity]\label{def_fct2}
We call a $1$-periodic function $f: \R  \to \mathbb{R}$ with $\int_{[0,1)} f(x) \,\mathrm{d}x = 0$ 
a \textit{smooth function with rational logarithmic singularity} if there exist constants
$c_1,c_2 \in \mathbb{R}$, a $1$-periodic function $ t: \R \to \mathbb{R}$ with bounded variation on $[0,1)$ and $x_1 \in \mathbb{Q}$ such that
\[f(x) = 
\begin{cases}
c_1\log(\lVert x-x_1\rVert ) + c_2\log(\{x-x_1 \}) + t(x), & \text{ if } x \not\equiv x_1 \pmod{1} \\
t(x), & \text{ if } x \equiv x_1 \pmod{1}.
\end{cases} \]
Here and throughout the paper, $\{.\}$ denotes the fractional part and $\lVert . \rVert$ denotes the distance to the nearest integer (for a proper definition see Section \ref{notation}).

If $c_2 = 0$ and $ c_1 \neq 0$, we call the singularity \textbf{symmetric}. If $c_2 \neq 0$, we call it \textbf{asymmetric}.
\end{df}

We examine the maximal and typical oscillations of $S_N(f, \alpha,q)$ for Lebesgue almost every
$\alpha$ where $q \in \mathbb{Q}$ and $f$ is either of the form as in Definition \ref{def_fct1} or Definition \ref{def_fct2}. Our methods give rise to results in two different directions that are elaborated in detail below.

\subsection{Khintchine-type upper density results}

Let us recall the classical Khintchine Theorem from the metric theory of Diophantine approximation.
If $(\psi(q))_{q \in \N}$ is a non-negative sequence, and $q\psi(q)$ is decreasing, then for almost all $\alpha$, the inequality

\[\left\lvert \alpha - \frac{p}{q}\right\rvert < \frac{1}{q \psi(q)} \]
has infinitely many integer solutions $(p, q )\in \mathbb{Z} \times \N $ if and only if $\sum_{k=1}^{\infty}\frac{1}{\psi(k)}$ diverges.

Such a convergence-divergence criterion, often called Khintchine-type result, appears in many different statements
that deal with the metric theory of Diophantine approximation and the closely related theory of continued fractions.
Classical results of this form are, among others, the Borel\,--\,Bernstein Theorem (see Section \ref{prereq}) and another theorem of Khintchine \cite{khin_1923} on the discrepancy of the Kronecker sequence. 
Recall that the discrepancy of a sequence $\xn$ in the unit interval is defined as
\[D_N(\xn) := \sup_{0 \leq a < b \leq 1}\left\lvert\frac{\left |  \{1 \leq n \leq N: x_n \in [a,b)\} \right | }{N} - (b-a)\right\rvert.\]
By \cite{khin_1923}, for an increasing function $ \psi: \R_+ \to \R_+$, one has $D_N((n\alpha)_{n \in \mathbb{N}}) \gg \psi(\log N) + \log N \log \log N$ infinitely often if and only if $\sum_{k =1}^{\infty} \frac{1}{\psi(k)} = \infty$.

Such a Khintchine-type behaviour was also discovered for a certain Birkhoff sum arising from the logarithm of the Sudler product $\prod_{r =1}^{N} 2\lvert \sin(\pi r\alpha) \rvert$.
The analysis of this product led to many interesting developments in various areas in mathematics in the last decades, see, e.g., \cite{sudler_aist_borda,zag_conj,tech_zaf,aj,grepstad_neum,knitan,lubinsky_pade}.
Lubinsky \cite{lubinsky} showed that if $f(x) = \log\left(2 \left\lvert\sin(\pi x)\right\rvert\right)$, then, for almost every $\alpha$ and every monotone increasing $\psi : \R_+ \to \R_+$ with $ \liminf_{k \to \infty}\frac{\psi(k)}{k \log k} = \infty$, the inequalities
\begin{equation}\label{sudler_khin}S_N(f,\alpha,0) \geq \psi(\log N), \quad S_N(f,\alpha,0) \leq -\psi(\log N),\end{equation}
hold for infinitely many $N$, if and only if  $\sum_{k =1}^{\infty} \frac{1}{\psi(k)} = \infty$. This result was refined by Borda \cite{borda} in the following way:
Recall that for a set of integers $A \subseteq \N$, we define its upper density to be
$\limsup_{M \to \infty} \frac{\lvert A_M\rvert}{M}$ where $A_M := \{N \leq M: N \in A\}$. It was shown in  \cite{borda} that in the diverging case, both inequalities \eqref{sudler_khin} hold on a set of positive upper density. This was further improved in \cite{hauke}, where it was shown that the actual upper density equals $1$. Note that $f(x) = \log\left(2 \left\lvert\sin(\pi x)\right\rvert\right)$ does have a single symmetric logarithmic singularity at $0$.
As pointed out in \cite{dol_sar}, the behaviour of Birkhoff sums with $f$ having a symmetric logarithmic singularity is expected to be similar to $f$ being as in Definition \ref{def_fct1}, since the decay of the Fourier coefficients is of the same order. Our first theorem supports this expected behaviour as we obtain a 
Khintchine-type result on the upper density of the same form for piecewise smooth functions with rational discontinuities.

\begin{thm}
\label{ThmUpperDensDiscFunctions}
Let $ \psi : \R_+ \rightarrow \R_+$ be a monotone increasing function, $f$ a function as in Definition \ref{def_fct1}, i.e. $f$ is smooth up to finitely many discontinuities $0 \leq x_1 < \ldots < x_{\nu} < 1$ at rationals, and $q \in \mathbb{Q}$. If $ \sum\limits_{k =1}^{\infty} \frac{1}{\psi(k)} = \infty $, then, for almost all $ \alpha \in [0,1)$, both sets
\begin{equation}
\label{EqSetsUpperDens}
    \left \{ N \in \N \ : \ S_N( f , \alpha ,q) \geq \psi( \log N )  \right\}, \qquad \left \{ N \in \N \ : \ S_N( f , \alpha ,q) \leq - \psi( \log N )  \right\}
\end{equation}
have positive upper density. If $ \sum_{k=1}^{\infty} \frac{1}{\psi(k)} < \infty$, there exists a constant $c > 0$ such that, for almost all $ \alpha \in [0,1)$, the set
\begin{equation*}
    \left \{ N \in \N  :  \lvert S_N( f, \alpha, q) \rvert \geq  \psi( \log N) + c \log N \log \log N  \right \}
\end{equation*}
is finite. 
\end{thm}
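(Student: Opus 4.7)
The plan is to reduce $S_N(f,\alpha,q)$ to a linear form in the Ostrowski digits of $N$, and then to apply a Borel\,--\,Bernstein type criterion to the partial quotients of $\alpha$. First I would peel off the jumps of $f$, writing $f = \sum_{i=1}^\nu \Delta_i B(\,\cdot\, - x_i) + g$, where $B(x) = \{x\} - \tfrac12$ is the sawtooth, $\Delta_i$ is the jump at $x_i$, and $g$ is continuous with $g' \in \mathrm{BV}$. For $g$, Koksma\,--\,Denjoy\,--\,Koksma combined with the Erd\H{o}s\,--\,Tur\'an discrepancy bound for $(n\alpha)$ yields $|S_N(g,\alpha,q)| = O(\log N \log \log N)$ for a.e.\ $\alpha$, which is absorbed into the stated error term. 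After shifting $q \mapsto q - x_i$, matters reduce to the sawtooth case $f = B$ with $q \in \mathbb{Q}$.

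Expanding $N = \sum_{\ell=0}^{L} b_\ell q_\ell$ in its Ostrowski form (with $L \asymp \log N$ by L\'evy's theorem) and telescoping the orbit $(n\alpha + q)_{n \leq N}$ into blocks of length $q_\ell$ yields
\[ S_N(B,\alpha,q) \;=\; \sum_{\ell=0}^{L} b_\ell \, S_{q_\ell}\!\big(B,\alpha,q + \theta_\ell\big) + O(\log N), \]
for appropriate shifts $\theta_\ell$ determined by the lower-order Ostrowski digits. Each factor $S_{q_\ell}(B,\alpha,q+\theta_\ell)$ is $O(1)$ by Denjoy\,--\,Koksma; using $\|q_\ell\alpha\| \asymp 1/q_{\ell+1}$ and the near-equidistribution of $(n\alpha)_{n \leq q_\ell}$, one shows that for a.e.\ $\alpha$ and a positive density of $\ell$ in each parity class these factors have magnitude bounded below by a fixed constant and carry sign $\pm(-1)^\ell$. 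For the convergence half of the theorem, classical Borel\,--\,Bernstein gives $a_{\ell+1} < \psi(\ell)$ for all but finitely many $\ell$ a.s.; combined with the Khintchine/L\'evy bound $\sum_{\ell \leq L} a_{\ell+1} = O(L \log L)$ this yields $|S_N(B,\alpha,q)| \leq \psi(\log N) + O(\log N \log\log N)$, as required.

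For the divergence half, the generalized Borel\,--\,Bernstein theorem announced in the abstract supplies, for a.e.\ $\alpha$ and each fixed parity of $\ell$, infinitely many $\ell$ with both $a_{\ell+1} \geq \psi(\ell)$ and $|S_{q_\ell}(B,\alpha,q+\theta_\ell)| \gtrsim 1$ of prescribed sign. Fixing such an $\ell$ of even parity, say, the window of $N$ of the form $N = j q_\ell + r$ with $a_{\ell+1}/3 \leq j \leq 2 a_{\ell+1}/3$ and $0 \leq r < q_\ell$ contains $\gtrsim a_{\ell+1} q_\ell \asymp q_{\ell+1}$ integers, on each of which the main term has the required sign and magnitude $\gtrsim a_{\ell+1} \geq \psi(\ell) \asymp \psi(\log N)$. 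Taking $M = q_{\ell+1}$ then gives a positive fraction of $N \leq M$ with $S_N(f,\alpha,q) \geq \psi(\log N)$; odd $\ell$ yields the analogous statement for $S_N \leq -\psi(\log N)$. The main obstacle, I expect, is the sign bookkeeping combined with the generalized Borel\,--\,Bernstein theorem: the shifts $\theta_\ell$ induced by the rational starting point $q$ could a priori conspire to degrade the ``main term'' on a bad subsequence of $\ell$, and verifying that the ``positive density of $\ell$ in each parity class with $|S_{q_\ell}| \gtrsim 1$'' condition genuinely survives the extra restriction $a_{\ell+1} \geq \psi(\ell)$ appears to be the technically delicate step.
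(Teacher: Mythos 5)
Your proposal follows the same high-level strategy as the paper — peel off the jumps, analyze the top Ostrowski block via the sawtooth asymptotics, and conclude with a refined Borel--Bernstein argument — but there are a few genuine gaps.

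\textbf{The reduction to a single sawtooth does not go through.} After subtracting $\sum_{i=1}^\nu \Delta_i B(\cdot - x_i)$, you have a weighted sum of sawtooths at \emph{different} rational centers, not a single shifted one. Writing $\{x-x_i\}-\tfrac12 = (\{x\}-\tfrac12) - x_i + \mathds{1}_{[0,x_i)}(x)$ shows this is a sawtooth scaled by $\sum_i\Delta_i$ \emph{plus} a combination of local discrepancy functions. When $\sum_i\Delta_i=0$ (a perfectly valid situation under Definition~\ref{def_fct1}) the pure sawtooth term vanishes, and the remaining indicator contributions can also cancel. This cancellation among the $\nu\geq 2$ pieces — not degradation from the starting point $q$, which you flag — is the true delicacy, and it is exactly what the paper's Lemma~\ref{prop_telescopic_part} and the case analysis in Lemmas~\ref{lem_case_ud1}--\ref{lem_case_posud} handle: the sign and size of the indicator contributions depend on $q_{K-1}$ modulo the denominators $s_i$, which is the reason the generalized Borel--Bernstein (Lemma~\ref{LemInfiniteSets}) imposes a congruence on $q_{K-1}$, not merely on $K$.

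\textbf{The individual block sign claim is not correct as stated.} Within a fixed scale $\ell$, the shift $\theta_\ell$ varies with the top digit $j=b_\ell$, and the block contribution $S_{q_\ell}(B,\alpha,q+\theta_\ell)$ changes sign as $j$ passes $\approx a_{\ell+1}/2$; it does not carry a fixed sign $\pm(-1)^\ell$ over the block. What is true — and what you implicitly use when you restrict to $j\in[a_{\ell+1}/3,2a_{\ell+1}/3]$ — is the \emph{cumulative} asymptotic $S_{jq_\ell}(B,\alpha) = (-1)^{\ell+1}\tfrac{j}{2}\bigl(1-\tfrac{j}{a_{\ell+1}}\bigr)+O(1)$ of Lemma~\ref{LemAsymptoticsSN}, which is of size $\gtrsim a_{\ell+1}$ in that middle window. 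Your conclusion is right but the intermediate justification is off.

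\textbf{The convergence half misattributes the sum bound.} The untrimmed sum $\sum_{\ell\leq L}a_\ell$ is \emph{not} $O(L\log L)$ for a.e.\ $\alpha$ (if it were, the $\psi(\log N)$ term in the theorem would be superfluous). One needs the Diamond--Vaaler trimmed-sum estimate~\eqref{Eq_trimmed_sum}, $\sum_{\ell\leq L}a_\ell-\max_\ell a_\ell \sim \tfrac{L\log L}{\log 2}$, together with the convergent half of Borel--Bernstein to bound $\max_\ell a_\ell$ by $\psi$, which is what produces $\psi(\log N)+c\log N\log\log N$. Finally, your $O(\log N\log\log N)$ control on the smooth remainder is adequate for Theorem~\ref{ThmUpperDensDiscFunctions} (absorbed in the error term) but weaker than the $O(1)$ bound the paper takes from \cite{quenched}, which is needed for the finer density-one corollaries.
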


Theorem \ref{ThmUpperDensDiscFunctions} shows the following interesting behaviour:
Let $\psi$ be monotonically increasing and \\$\liminf_{k \to \infty} \frac{\psi(k)}{k \log k} = \infty$. As soon as there are infinitely many $N \in \N$ such that $\lvert S_N( f , \alpha ,q) \rvert \geq \psi( \log N )$, then immediately a ``positive proportion'' of all natural numbers satisfies this property.\newline

\par{}

In Section \ref{SubsectionProofThmDiscFunctions}, we prove Theorem \ref{ThmUpperDensDiscFunctionsRefined}, which is a slightly stronger result than Theorem \ref{ThmUpperDensDiscFunctions}. It turns out that for many classical functions that fall in the framework of Definition \ref{def_fct1}, the sets \eqref{EqSetsUpperDens} have actually upper density $1$.

\begin{cor}\label{special_fct_dens_1}
    Let $\psi$ and $q$ be as in Theorem \ref{ThmUpperDensDiscFunctions}
    with $\sum_{k =1}^{\infty}\frac{1}{\psi(k)} = \infty$
    . Then, for almost all $\alpha \in [0,1)$, the sets in \eqref{EqSetsUpperDens} have upper density $1$ for the following functions $f$:

    \begin{itemize}
    \item The classical saw-tooth function $f(x) = \{x\} - 1/2$.
    \item The local discrepancy functions with rational endpoints $f(x) = \mathbb{1}_{[a,b]}(\{x\}) - (b-a)$. In particular, this means that the set $ \left \{ N \in \N : D_N( (n \alpha)_{n \in \N} ) \geq \psi( \log N) \right \}$ has upper density $1$, improving the result in \cite{khin_1923}.
    \item the $1/2$-discrepancy function $f(x) = \mathds{1}_{[0,1/2)}(\{x\}) - \mathds{1}_{[1/2,1)}(\{x\})$, which is intensively studied in, e.g., \cite{ralston1,ralston2}.
    \end{itemize}
\end{cor}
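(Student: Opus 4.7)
The plan is to deduce Corollary \ref{special_fct_dens_1} from Theorem \ref{ThmUpperDensDiscFunctionsRefined}, a strengthening of Theorem \ref{ThmUpperDensDiscFunctions} to be established in Section \ref{SubsectionProofThmDiscFunctions}. The refined result will supply a quantitative lower bound on the upper density of the sets in \eqref{EqSetsUpperDens} expressed in terms of the jump structure of $f$ and the decay of its Fourier coefficients; the corollary then reduces to checking that this lower bound equals $1$ for each of the three listed functions.

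I would first treat the sawtooth $f(x) = \{x\} - 1/2$, whose structure is the cleanest. Its Fourier expansion is the pure sine series $-\sum_{n \geq 1} \sin(2\pi n x)/(\pi n)$, with a single jump at the origin of height $-1$ and no smooth component in between. In this regime the refined theorem should yield density $1$ directly, since no competing contribution can offset the signal produced at the continued fraction denominators $q_k$ of $\alpha$.

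Both other examples reduce to this case. Writing $B_1(y) = \{y\} - 1/2$, one has (off the discontinuities) $\mathbb{1}_{[a,b]}(\{x\}) - (b-a) = B_1(x-b) - B_1(x-a)$, and the $1/2$-discrepancy function is, up to an additive constant, the special choice $a=0$, $b=1/2$ scaled by $2$, namely $2\,(B_1(x-1/2) - B_1(x))$. Hence in both cases $S_N(f,\alpha,q)$ decomposes as a fixed linear combination of two sawtooth Birkhoff sums at rational initial points, and the refined theorem can be applied to each summand.

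The main obstacle will be ruling out systematic cancellation between the two sawtooth contributions: a priori these could cancel at the critical times $N \sim q_k$ and thereby suppress the overall signal. For almost every $\alpha$ this cannot occur, because the rational shifts produce phase factors $e(q_k a)$, $e(q_k b)$ that become noncoherent along a density-one subsequence of $k$; this will follow from the standard equidistribution of $(q_k \bmod m)_{k \in \mathbb{N}}$, where $m$ is a common denominator of the jump locations of $f$. With this non-resonance in hand, the refined theorem yields upper density $1$ for the composite functions as well.
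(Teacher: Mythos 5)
Your reduction to a refined quantitative version of Theorem~\ref{ThmUpperDensDiscFunctions} is the right idea, and the paper does prove Corollary~\ref{special_fct_dens_1} by appealing to Theorem~\ref{ThmUpperDensDiscFunctionsRefined}. However, the route you sketch after that diverges from the paper's and, as described, has a genuine gap.

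Your plan is to write $\mathbb{1}_{[a,b]}(\{x\}) - (b-a) = B_1(x-b) - B_1(x-a)$ and apply the sawtooth result to each summand, then argue that the two contributions do not systematically cancel because of ``noncoherent phase factors $e(q_k a), e(q_k b)$''. But this is not how the cancellation works. Lemma~\ref{LemAsymptoticsSN} shows that the dominant term of a sawtooth Birkhoff sum along $N = b q_{K-1}$ is $(-1)^K \tfrac{b}{2}(1 - b/a_K)$, and this leading term is the same for both $B_1(\cdot - a)$ and $B_1(\cdot - b)$ — it does not carry a phase depending on the shift. Consequently, along the subsequence you want to use, the leading contributions of your two sawtooth sums cancel \emph{identically}; this is precisely the statement that $\sum_i A_i = 0$ for a local discrepancy function (and also for the $1/2$-discrepancy function). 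No equidistribution argument for $q_k \bmod m$ can rescue a term that is structurally zero. What actually produces the signal in these cases is the sub-leading ``telescopic'' part, captured in the paper by Lemma~\ref{prop_telescopic_part}, which is sensitive to the fractional parts $\{x_i q_{K-1}\}$. That dependence is controlled not by equidistribution of $q_k \bmod m$ but by forcing $q_{K-1}$ into a specific residue class along infinitely many $K$ with large $a_K$, via the refined Borel--Bernstein Theorem (Lemma~\ref{LemInfiniteSets}, which rests on Kesten's lemma).

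The paper's actual proof of the corollary is a one-line verification: Theorem~\ref{ThmUpperDensDiscFunctionsRefined} yields upper density $1$ whenever $\sum_{i} A_i \neq 0$ or $\sum_i c_i(x_i - x_{i-1}) \neq 0$, where the $c_i$ are the cumulative jump heights. For the sawtooth, $\sum A_i \neq 0$ directly. For $\mathbb{1}_{[a,b]}(\{x\}) - (b-a)$ and the $1/2$-discrepancy function, $\sum A_i = 0$ but $\sum_i c_i(x_i - x_{i-1}) \neq 0$, so the alternative condition applies. To make your sawtooth decomposition work one would have to track the full next-order term coming from the shift, which effectively reconstructs the telescopic decomposition of Lemma~\ref{LemRepresentationf}; you would not avoid that piece of machinery by subtracting two sawtooth sums.
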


Naturally, the question arises whether a similar behaviour can be expected for functions with logarithmic singularities. Note that in many applications (see for example \cite{dol_fay} and the references therein), one is not only interested in symmetric, but also asymmetric singularities. In the next statement we show that for functions $f$ with an asymmetric singularity, there is an analogue of Theorem \ref{ThmUpperDensDiscFunctions} with an additional scaling factor of $\log N$. 

\begin{thm}\label{logthm}
Let $f$ be as in Definition \ref{def_fct2}. Then, for any non-decreasing  $\psi: \R_+ \to \mathbb{R}_{+}$ and for any $q \in \mathbb{Q}$, the following holds:
\begin{enumerate}
    \item[(i)]
If the logarithmic singularity is \textit{asymmetric} and $\sum\limits_{k=1}^{\infty}\frac{1}{\psi(k)} = \infty$, then, for almost every $\alpha \in [0,1)$, we have that both sets
\begin{equation}\label{log_sets}
 \quad    \left \{ N \in \N : S_N( f, \alpha,q) \geq \log N \psi( \log N ) \right \}, \qquad 
 \quad \left \{  N \in \N :  S_N(f,  \alpha,q) \leq - \log N \psi( \log N ) \right \} 
\end{equation}
have upper density $1$. If $\sum_{k=1}^{\infty} \frac{1}{\psi(k)} < \infty $, then there exists a constant $c > 0$ such that, for almost all $ \alpha \in [0,1)$, the set
\begin{equation*}
    \left \{ N \in \N  :  \lvert S_N( f, \alpha, q) \rvert \geq \log N \left( \psi( \log N) + c \log N \log \log N  \right) \right \}
\end{equation*}
is finite.
\item[(ii)] If the logarithmic singularity is \textit{symmetric}, then for almost every $\alpha \in [0,1)$, we have 
\[\lvert S_N(f, \alpha , q) \rvert \ll (\log N)^2\log \log N.\]
\end{enumerate}
\end{thm}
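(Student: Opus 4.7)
My plan is to reduce the theorem to a statement about the growth of the Birkhoff sums over complete return times $q_k$ and then feed this into the Ostrowski expansion $N = \sum_{k=0}^{K} b_k q_k$ with $0 \le b_k \le a_{k+1}$ and $q_K \le N < q_{K+1}$. Writing
\[ S_N(f,\alpha,q) = \sum_{k=0}^{K} \sum_{j=0}^{b_k-1} S_{q_k}\!\bigl(f,\alpha, q + \theta_{k,j}\bigr) \]
for suitable phase shifts $\theta_{k,j}$ depending on the higher Ostrowski digits reduces the whole question to a uniform control of $S_{q_k}(f,\alpha,q')$ in $q'$. The three-distance theorem describes how the orbit $\{j\alpha + q' : 0 \le j < q_k\}$ is arranged near the singularity $x_1$, and this is precisely where the symmetric and asymmetric cases diverge.

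For part (ii), the symmetric structure makes the pair of orbit points flanking $x_1$ contribute with opposite, nearly cancelling weights, giving the Denjoy\,--\,Koksma-type bound
\[ \sup_{q' \in \R} \lvert S_{q_k}(f, \alpha, q') \rvert \ll \log q_k. \]
Plugging this into the Ostrowski expansion yields $\lvert S_N(f,\alpha,q) \rvert \ll \sum_{k=0}^{K} b_k \log q_k \ll \log N \cdot \sum_{k=0}^{K} a_{k+1}$, and combined with the classical metric estimate $\sum_{k \le K} a_{k+1} \ll K \log K$ for a.e.\ $\alpha$ (up to absorbable sub-logarithmic losses, e.g.\ via a Diamond\,--\,Vaaler-type truncation of $a_k$ at height $k(\log k)^{1+\varepsilon}$ together with Gauss\,--\,Kuzmin control of the truncated mean), together with $K \asymp \log N$, one obtains the advertised bound $(\log N)^2 \log \log N$.

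For part (i), the asymmetric singularity prevents the above cancellation and yields the sharper asymptotic
\[ S_{q_k}(f,\alpha,q') = c_2 \log q_k + O(1), \qquad \text{uniformly in } q', \]
because pairing the two closest orbit points on the same side of $x_1$ leaves a net $c_2$-contribution instead of cancelling. Feeding this into the Ostrowski decomposition gives $S_N(f,\alpha,q) = c_2 \bigl(\sum_k b_k\bigr) \log q_K + O(\text{lower order})$, which for $N$ in the arithmetic progression $\{j q_K : 0 < j \le a_{K+1}\}$ is of absolute value $\asymp a_{K+1} \log N$ with a controllable sign. The generalised Borel\,--\,Bernstein theorem alluded to in the abstract then provides, for a.e.\ $\alpha$ and any non-decreasing $\psi$ with $\sum 1/\psi(k) = \infty$, a set of $k$ of density one on which $a_{k+1} \ge \psi(k)$; each such $k$ contributes $\asymp a_{k+1}$ many $N$'s near $q_{k+1}$ that populate the positive (resp.\ negative) set of \eqref{log_sets}, altogether forming a set of upper density one. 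The convergent counterpart follows from classical Borel\,--\,Cantelli applied to $a_{k+1}$ and the same decomposition.

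The principal obstacle is the uniform $q'$-independent asymptotic $S_{q_k}(f,\alpha,q') = c_2 \log q_k + O(1)$ in part (i): the symmetric part of the singularity tries to kill the logarithm, and one must isolate carefully the geometric mechanism---the closest orbit point to $x_1$ and its $q_k$-translate lying on the same side of $x_1$---by which the $c_2$-coefficient survives. A secondary hurdle is the passage from positive upper density to upper density one; this is what forces the use of the generalised Borel\,--\,Bernstein theorem (in the flavour of Corollary~\ref{special_fct_dens_1}) rather than the plain convergence-divergence dichotomy.
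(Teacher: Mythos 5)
There are several genuine gaps in your proposal, and I'll focus on the two most serious.

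\emph{Failure of the uniform block estimates.} Both parts of your argument hinge on $q'$-uniform control of the single-period sum $S_{q_k}(f,\alpha,q')$, but neither claimed estimate is true. In the symmetric case, $\sup_{q'\in\R}\lvert S_{q_k}(f,\alpha,q')\rvert \ll \log q_k$ fails: the sum contains a term $\log\lVert n\alpha+q'-x_1\rVert$ at the orbit point nearest the singularity, and for the phases $q'$ that actually arise (shifts $q+m\alpha$ with $m<N<q_K$) this term is only bounded by $\asymp K\asymp\log q_K$, not $\log q_k$, which for $k\ll K$ is much worse than your claimed bound. The paper handles this by showing (Proposition~\ref{PropDistanceNalphaToq}) that at most one block index $b$ can bring the orbit within $1/(4q_{K_0})$ of $x_1$, bounding that one block's contribution separately by $\ll K$, and estimating all other blocks via the cancellation argument; only the combination yields $K^2\log K$. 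In the asymmetric case, $S_{q_k}(f,\alpha,q')=c_2\log q_k+O(1)$ uniformly in $q'$ is also false: by Proposition~\ref{sum_vs_int_log}, the dominant term is $\sum_j(\varepsilon_j-1/2)/j$ where the offsets $\varepsilon_j$ depend on $q'$ (equivalently, on the block position $b$), and as $b$ ranges over $[0,a_K]$ the quantity $(\varepsilon_j-1/2)$ interpolates continuously from $-1/2$ through $0$ to $+1/2$, with sign further tied to the parity of $k$. That is precisely why Lemma~\ref{LemMainTermEstRationalShift} restricts to $\delta a_K<b<a_K/4$ and imposes the congruence $q_{K-1}\equiv0\pmod s$; without that restriction there is no coefficient in front of $\log q_k$ that you can pin down.

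\emph{Misreading of the Borel--Bernstein input.} You claim the refined Borel--Bernstein theorem gives ``a set of $k$ of density one on which $a_{k+1}\geq\psi(k)$''. This is not so: for any unbounded $\psi$ the Gauss measure of $\{a_1>t\}$ is $\asymp1/t$, so by the ergodic theorem the set of $k$ with $a_{k+1}\geq\psi(k)$ has \emph{density zero}; Borel--Bernstein (or Lemma~\ref{LemInfiniteSets}) only provides infinitely many such $k$, with the additional parity and congruence constraints on $K$ and $q_{K-1}$. The upper density one for the sets in \eqref{log_sets} is not inherited from a density-one set of indices $k$; it comes, as in the proof of Theorem~\ref{ThmUpperDensDiscFunctionsRefined}, from the fact that once $K$ is fixed with $a_K$ dominating $\sum_{i<K}a_i$, the good $N$'s cover a proportion $\geq1-\varepsilon$ of $[1,M_j]$ with $M_j\asymp a_K q_{K-1}$, and one then takes $\limsup$ along these special scales $M_j$. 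Your argument as written conflates these two mechanisms. The heuristic intuition in your last paragraph (flanking points, asymmetric survival of the $\log$, need for the refined Borel--Bernstein) is sound, but the quantitative backbone you propose to hang it on is missing precisely the restrictions and case distinctions that make the paper's proof close.
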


Theorem \ref{logthm} shows the surprising fact that one should expect a completely different oscillation between the Birkhoff sums of functions with symmetric and asymmetric singularity. To see this, we note that $ \psi : \R_+ \to \R_+ $ with $\psi(x) := x \log (x +1) \log \log (x +10)$ satisfies $ \sum_{k=1}^{\infty} \frac{1}{ \psi(k)} = \infty$ and thus, for a function $f$ with an asymmetric logarithmic singularity, the sets in \eqref{log_sets} have upper density $1$. However, the same sets are finite if the underlying Birkhoff sum is generated by a function $g$ with symmetric logarithmic singularity. This is because Theorem \ref{logthm} (ii) tells us that $ \lvert S_N(f, \alpha , q) \rvert \ll (\log N)^2\log \log N = o( \log N \psi(\log N) )$.

\begin{rem}\item
\begin{itemize}
\item One can clearly see from the proof of Theorem \ref{logthm}(i) that it is possible to generalize the result to functions of the form $f(x) = f_1(x) + f_2(x) + f_3(x)$, where $f_1(x)$ is a smooth function with asymmetric singularity at a rational number $x_1$, $f_2$ is a smooth function with finitely many symmetric logarithmic singularities at rational numbers $x_2,x_3,\ldots, x_{n}$ and $f_3$ is a piecewise smooth function with finitely many discontinuities.

\item The actual maximal oscillation of Birkhoff sums with a symmetric logarithmic singularity for generic $\alpha$ remains open.
Using \cite[Proposition 12]{borda}, the result of \cite{lubinsky} and $f$ being a smooth $1$-periodic function with its symmetric logarithmic singularity located at $0$, we have that, for almost every $\alpha \in [0,1)$, $\lvert S_N(f,\alpha,0) \rvert \ll \psi(\log N) + \log N \log \log N$ for any monotone $\psi$ with $\sum_{k=1}^{\infty} \frac{1}{\psi(k)} < \infty.$
Most probably, the bound $(\log N)^2\log \log N$ is not sharp for logarithmic singularities that are located at arbitrary rationals. We did not aim to achieve the best possible bound, but wanted to
stress the different behaviour of symmetric and asymmetric logarithmic singularities.
Possibly, the upper bound for the Birkhoff sum with symmetric logarithmic singularity coincides with the Khintchine-type behaviour in Theorem \ref{ThmUpperDensDiscFunctions}.
A proof of this would probably need to make use of delicate estimates on shifted cotangent sums and is beyond the scope of this paper.
\end{itemize}
\end{rem}

\subsection{Sharpness of the Denjoy--Koksma inequality}\label{koksma_sec}

Recall the classical Denjoy--Koksma inequality (see, e.g., \cite{herman}):
Let $\alpha$ be fixed and let $p_n/q_n$ denote its $n$-th convergent. If $f$ is a $1$-periodic function of bounded variation $\Var(f)$ on $[0,1)$, then, for any $q \in \R$,
\[ \left \lvert S_{q_n}(f, \alpha, q) \right \rvert  = \left\lvert\sum_{k=0}^{q_n -1} f(k\alpha+q) - q_n\int_{0}^{1} f(x)\,\mathrm{d}x\right\rvert
\leq \Var(f).\]

For $N = \sum_{i=0}^{K(N)-1} b_iq_i$ being its Ostrowski expansion (see Section \ref{cf_prop} for details), we immediately obtain the bound

\begin{equation}
\label{koksma_bound}
 \lvert S_{N}(f,\alpha,q)\rvert
\leq \Var(f) \sum_{i =1}^{K(N)}a_i,
\end{equation}

where $\alpha = [0;a_1,a_2,\ldots]$ is the classical continued fraction expansion
and $K(N)$ denotes the integer $K$ such that $q_{K-1} \leq N < q_K$.
It is natural to ask whether \eqref{koksma_bound} is sharp for particular functions $f$. This was essentially already proven in \cite{khin_1923} for both the classical saw-tooth function $\{x\} - 1/2$ and the local discrepancy functions $\mathds{1}_{[a,b]}( \{x\}) - (b-a)$: for almost every $\alpha$, there are infinitely many $N$ where \eqref{koksma_bound} can be reverted up to an absolute positive constant. However, to the best of our knowledge, all results in this direction only show
that this bound is essentially obtained for infinitely many $N$, but there is no statement about the frequency of those $N$. This is shown in the following Theorem.

\begin{thm}\label{koks_sharp}
    Let $f$ be a function with finitely many discontinuities at rationals (see Definition \ref{def_fct1}) and let $q \in \mathbb{Q}$. For fixed $\alpha $ and $N \in \N$, let $K(N)$ denote
     the integer $K$ such that $q_{K-1} \leq N < q_K$.
Then, for almost all $ \alpha \in [0,1)$, both sets
\begin{equation*}
    \left \{ N \in \N \ | \ S_N( f , \alpha, q ) \gg  \sum_{i =1}^{K(N)}a_i \right\}, \qquad \left \{ N \in \N \ | \ S_N( f , \alpha ,q) \ll - \sum_{i =1}^{K(N)}a_i  \right\}
\end{equation*}
have positive upper density. The implied constants depend on $\alpha,f$ and $q$. 
\end{thm}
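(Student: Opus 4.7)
The plan is to combine the Ostrowski-based decomposition of $S_N$ underlying Theorem \ref{ThmUpperDensDiscFunctionsRefined} with a scale-selection argument that isolates indices $K$ at which a single partial quotient dominates the entire Denjoy--Koksma bound. The first step is to observe, via the divergent direction of the Borel--Bernstein theorem (which fits into the general framework developed for Theorem \ref{ThmUpperDensDiscFunctions}), that for almost every $\alpha$ there exist infinitely many $K \in \N$ with $a_{K+1} \geq \bigl(\sum_{i \leq K} a_i\bigr)^2$; along any such subsequence one has $\sum_{i=1}^{K+1} a_i \asymp a_{K+1}$, so it suffices to produce positive upper density of $N$ for which $|S_N(f,\alpha,q)| \gg a_{K+1}$ at scale $M = q_{K+1}$.

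For such a favorable $K$, I would write an arbitrary $N \in [q_K, q_{K+1})$ as $N = b q_K + r$ with $1 \leq b \leq a_{K+1}$ and $0 \leq r < q_K$. The cocycle identity then gives
\[
S_N(f,\alpha,q) = S_r(f,\alpha,q) + \sum_{j=0}^{b-1} S_{q_K}\bigl(f, \alpha,\, q + r\alpha + j q_K \alpha\bigr).
\]
Since $\lVert q_K \alpha \rVert \leq 1/q_{K+1}$ and $j < a_{K+1}$, all $b$ shifts $j q_K \alpha \pmod 1$ lie in an interval of length $\leq 1/q_K$, so each summand differs from $S_{q_K}(f,\alpha, q + r\alpha)$ only by a controllable quantity. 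The Kesten-type analysis for $f$ as in Definition \ref{def_fct1}, which is already an ingredient in the proof of Theorem \ref{ThmUpperDensDiscFunctions}, shows that the map $q' \mapsto S_{q_K}(f,\alpha,q')$ is a step function with boundedly many distinct values, and for almost every $\alpha$ there exist intervals $A^{+}, A^{-} \subset [0,1)$ of positive (and $K$-uniform) measure on which $\pm S_{q_K}(f, \alpha, q') \geq c = c(\alpha, f, q) > 0$, respectively.

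For $N = b q_K + r$ with $b \geq a_{K+1}/2$ and $r\alpha + q \pmod 1 \in A^{+}$, combining the above bounds (using Denjoy--Koksma for $S_r$) yields
\[
S_N(f,\alpha,q) \geq \tfrac{c}{2}\, a_{K+1} - O\Bigl( \sum_{i \leq K} a_i \Bigr) \gg \sum_{i=1}^{K+1} a_i.
\]
The equidistribution of $\{r\alpha + q \pmod 1 : 0 \leq r < q_K\}$ delivers $\asymp |A^{+}|\, q_K$ valid choices of $r$ for every admissible $b$, so the set of "good" $N$ in $[q_K, q_{K+1})$ has cardinality $\asymp |A^{+}|\, q_{K+1}$, a positive proportion of $[1, q_{K+1}]$. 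Letting $M = q_{K+1}$ along the favorable subsequence of $K$'s yields positive upper density of the first set in the theorem; the identical argument with $A^{-}$ handles the second.

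The main obstacle is the Kesten-type non-cancellation step, namely verifying that for almost every $\alpha$ the step function $q' \mapsto S_{q_K}(f, \alpha, q')$ attains values of both signs on positive-measure intervals with constants uniform in $K$. This sign-symmetric non-cancellation already underpins the upper-density half of Theorem \ref{ThmUpperDensDiscFunctions}, so the required metric input is available from that machinery. A subsidiary technical point is controlling the shift error in the cocycle identity: since each summand differs from $S_{q_K}(f,\alpha, q + r\alpha)$ by $O(1)$, the naive total error is $O(b)$, matching the main term in size; this must be sharpened to $o(a_{K+1})$ by exploiting the arithmetic progression structure of the shifts $j q_K \alpha$ together with the BV structure of $S_{q_K}(f,\alpha,\cdot)$ away from its (finitely many) jumps.
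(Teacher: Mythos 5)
Your scheme shares the skeleton of the paper's proof of Theorem~\ref{ThmKoksRefined} (Ostrowski decomposition at a scale where $a_{K+1}$ dominates the preceding partial quotients, Borel--Bernstein plus Diamond--Vaaler to find such $K$'s, cocycle decomposition over the blocks of length $q_K$), but the central sign/magnitude step is asserted rather than established, and in fact fails without an arithmetic restriction on $q_K$ that you do not impose.

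Concretely, the claim that for almost every $\alpha$ the map $q' \mapsto S_{q_K}(f,\alpha,q')$ has ``boundedly many distinct values'' and attains values $\geq c$ resp.\ $\leq -c$ on intervals $A^\pm$ of $K$-uniform positive measure is not correct as stated. First, this function has on the order of $\nu\, q_K$ jumps (one at each $x_i - n\alpha$, $n \leq q_K$), not boundedly many. Second, and more seriously, the uniform lower bound $c>0$ depends on the residue of $q_K$ modulo the denominators of the $x_i$'s. Take $f(x) = \mathds{1}_{[0,1/2)}(\{x\}) - 1/2$: when $q_K$ is even the $q_K$ points $\{n\alpha + q'\}$ are (up to $O(1/(a_{K+1}q_K))$ errors) the equispaced grid $\{m/q_K + q'\}$, and for any $q'$ exactly $q_K/2$ of them land in $[0,1/2)$, so $S_{q_K}(f,\alpha,q') = O(1/a_{K+1})$ for \emph{every} $q'$. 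The sets $A^\pm$ you require simply do not exist along the subsequence of even $q_K$. This is precisely why the paper does not argue at a fixed $K$, but instead proves a \emph{refined} Borel--Bernstein theorem (Lemma~\ref{LemInfiniteSets}) that selects infinitely many $K$ with $a_K$ dominant \emph{and} with prescribed residues for $K \bmod 2$ and $q_{K-1} \bmod s_1\cdots s_\nu$, feeding these into the exact evaluations of $S_{bq_{K-1}}$ for the sawtooth and local discrepancy building blocks (Lemmas~\ref{LemAsymptoticsSN} and~\ref{prop_telescopic_part}, combined in Lemmas~\ref{lem_case_ud1} and~\ref{lem_case_posud}). Nothing in your sketch produces these congruence conditions, and without them the sign is genuinely out of your control.

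A second gap, which you flag as ``subsidiary'' but is in fact a crux, is the boundary-crossing problem in the cocycle sum. Even granting the existence of sign-definite sets $A^\pm$, their components have length of order $1/q_K$ (the quasi-period of $q' \mapsto S_{q_K}(f,\alpha,q')$), while the shifts $jq_K\alpha$ for $0 \leq j < a_{K+1}$ sweep out an interval of the \emph{same} order $1/q_K$. Thus the shifted arguments $q + r\alpha + jq_K\alpha$ inevitably leave any single component of $A^+$, so the naive term-by-term bound is not available for the full range of $j$. The paper sidesteps this entirely by never arguing term-by-term: it estimates the \emph{cumulative} sum $S_{bq_{K-1}}(g,\alpha)$ directly, whose growth in $b$ is quantified by the explicit parabola $(-1)^K\frac{b}{2}(1 - b/a_K) + O(1)$ in the sawtooth case and the exact linear formula in the discrepancy case. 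Your proposal would need a separate argument (restricting $j$ to a shorter range, or averaging over $r$) to recover a lower bound, and the requisite uniformity in $K$ would again lead back to the congruence conditions.
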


Analogously to Corollary \ref{special_fct_dens_1}, the following stronger version of Theorem \ref{koks_sharp} can be obtained.

\begin{cor}\label{koks_dens1}
If $f(x) = \{x\} - 1/2$ or $ f(x) = \mathbb{1}_{[a,b]}(\{x \}) - (b-a)$ we have the following:
For almost all $\alpha \in [0,1)$ and any $\,0 < r < 1$, there exists a constant $C(r) = C(r,f,q) > 0$ such that both sets
\begin{equation*}
    \left \{ N \in \N \ \Big| \ S_N( f , \alpha,q) \geq C(r)  \sum_{i =1}^{K(N)}a_i \right\}, \qquad \left \{ N \in \N \ \Big| \ S_N( f , \alpha , q) \leq - C(r) \sum_{i =1}^{K(N)}a_i \right\}
\end{equation*}
have upper density at least $r$.
\end{cor}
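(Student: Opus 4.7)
The plan is to parallel the derivation of Corollary~\ref{special_fct_dens_1} from Theorem~\ref{ThmUpperDensDiscFunctions}: Theorem~\ref{koks_sharp} already identifies a positive-upper-density set of $N$ on which \eqref{koksma_bound} is essentially saturated, and for the two specific $f$ listed here the extra piecewise-linear structure of $S_N$ allows us to push that density arbitrarily close to $1$.

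For these $f$ I would first establish that $S_{kq_n}(f,\alpha,q)$ grows essentially linearly in $k$. Iterating the cocycle relation together with the Denjoy--Koksma inequality, one has, for every $n$ and every $1 \le k \le a_{n+1}$,
\[
S_{kq_n}(f,\alpha,q) \;=\; k \cdot S_{q_n}(f,\alpha,q) + O_f(1),
\]
while $|S_{q_n}(f,\alpha,q)|$ is bounded below by an absolute positive constant $c_0 = c_0(f,q)$ for infinitely many $n$ of each parity. The latter fact is exactly where the hypothesis on rationality of the discontinuities and of $q$ enters: for these $f$, $S_{q_n}(f,\alpha,q)$ is determined modulo $o(1)$ by the three-distance theorem applied to the orbit $\{k\alpha\}_{k<q_n}$ near the break points, and its magnitude and sign depend only on the parity of $n$ together with the Ostrowski coefficients of the break points and of $q$. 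Writing a general $N \in [q_n,q_{n+1})$ as $N = kq_n + R$ with $0 \le R < q_n$ and bounding $|S_R|$ via \eqref{koksma_bound} then yields
\[
S_N(f,\alpha,q) \;=\; k\cdot S_{q_n}(f,\alpha,q) + O_f\!\left(\textstyle\sum_{i=1}^{n} a_i\right).
\]

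Second, I would fix $r\in(0,1)$ and, via the Borel--Bernstein theorem, extract an almost-sure infinite sequence of indices $n$ along which $a_{n+1} \ge c_0^{-1}(1-r)^{-2}\sum_{i\le n} a_i$ and along which $S_{q_n}(f,\alpha,q)$ has a prescribed sign. For each such $n$, setting $M := q_{n+1}$ and
\[
\mathcal{N}_n \;:=\; \{\, kq_n + R \le M \,:\, (1-r)a_{n+1} \le k \le a_{n+1},\ 0 \le R < q_n \,\},
\]
the cardinality satisfies $|\mathcal{N}_n| \sim r\, a_{n+1}\, q_n \sim r\, M$, since $q_{n+1}\sim a_{n+1}q_n$ when $a_{n+1}$ is large. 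On $\mathcal{N}_n$ the previous display produces $|S_N(f,\alpha,q)| \ge c_0(1-r)a_{n+1}$ with the prescribed sign, and by the choice of subsequence this dominates $\sum_{i\le K(N)} a_i$ up to a factor $C(r)=C(r,f,q)>0$. Taking $M = q_{n+1}\to\infty$ along the subsequence then shows that one of the two sets in the statement has upper density at least $r$.

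The main subtlety, treated in the final step, is to realize both signs with density at least $r$ simultaneously. For the sawtooth (and, by a short reduction, also for $\mathbbm{1}_{[a,b]}(\{x\}) - (b-a)$ with $a,b\in\Q$) one has $S_{q_n}(f,\alpha,q) = (-1)^n \gamma + o(1)$ for a non-zero constant $\gamma=\gamma(f,q)$, which is the explicit three-distance computation. Applying the previous step separately to the even and odd index subsequences, each of which by Borel--Bernstein contains infinitely many $n$ with $a_{n+1}$ arbitrarily large, yields the desired lower bound on the upper density for both the positive and the negative set, completing the proof.
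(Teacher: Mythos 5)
The central step of your argument fails. You claim that iterating the cocycle relation with Denjoy--Koksma gives
\[
S_{kq_n}(f,\alpha,q) \;=\; k\,S_{q_n}(f,\alpha,q) + O_f(1),
\]
for all $1 \le k \le a_{n+1}$, but this is false for the sawtooth. Each application of the cocycle writes $S_{(k+1)q_n}-S_{kq_n}$ as $S_{q_n}$ evaluated at a shifted starting point, and these shifts (which are comparable to $k\delta_n$, i.e.\ at the scale where $S_{q_n}$ genuinely varies) do not produce an $O(1)$ total error; they accumulate. Concretely, Lemma~\ref{LemAsymptoticsSN} of the paper computes $S_{kq_{n}}(\{x\}-1/2,\alpha) = (-1)^{n+1}\tfrac{k}{2}\bigl(1-\tfrac{k}{a_{n+1}}\bigr)+O(1)$: the dependence on $k$ is \emph{quadratic}, vanishing both at $k=0$ and near $k=a_{n+1}$. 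Your proposed block $\mathcal{N}_n$ takes $k\in[(1-r)a_{n+1},a_{n+1}]$, which is exactly the region where this Birkhoff sum collapses to $O(1)$, so the asserted lower bound $|S_N|\ge c_0(1-r)a_{n+1}$ does not hold on $\mathcal{N}_n$. (The auxiliary claim that $|S_{q_n}(f,\alpha,q)|\ge c_0>0$ for infinitely many $n$ is also not justified, though this is secondary.)

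The paper avoids this pitfall by restricting the Ostrowski digit to $b\in[\delta a_K, c\,a_K]$ with a fixed $c<1$ (so as to stay away from the zero at $b\approx a_K$), and measuring density relative to $M_j=\lfloor c\,a_{k_j}q_{k_j-1}\rfloor$; this is Lemma~\ref{lem_case_ud1} together with the proof of Theorem~\ref{ThmKoksRefined}. In fact, the statement you are proving is obtained in the paper simply by noting that the two specific functions satisfy $\sum_i A_i\neq 0$ or $\sum_i c_i(x_i-x_{i-1})\neq 0$ and then invoking Theorem~\ref{ThmKoksRefined} directly. Your argument could be repaired by replacing the range $k\in[(1-r)a_{n+1},a_{n+1}]$ with $k\in[\delta a_{n+1}, c\,a_{n+1}]$, $c<1$, and by using the correct quadratic (or, for the indicator case, the telescoping estimate of Lemma~\ref{prop_telescopic_part}) in place of the false linearity, but as written the proof does not go through.
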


\begin{rem}
Note that in contrast to Corollary \ref{special_fct_dens_1}, we cannot include $r = 1$ in Corollary \ref{koks_dens1} since our method of proof gives $\lim\limits_{r \to 1} C(r) = 0$. This is due to the fact that the convergence-divergence condition in the Khintchine formulation enables to
hide a sufficiently large constant in the divergence condition of $\psi$ (since for $\tilde{\psi}(k) := c\psi(k)$, $\sum_{k=1}^{\infty}\frac{1}{\tilde{\psi}(k)}$ converges if and only if $\sum_{k=1}^{\infty}\frac{1}{{\psi}(k)}$ does)
, which is not possible to do in this setting.
\end{rem}

Theorem \ref{koks_sharp} has consequences in the theory of numerical integration: 
Assuming that there are functions $f$ such that the bound \eqref{koksma_bound}
is attained only along a very sparse subsequence $(N_k)_{k \in \mathbb{N}}$, one could hope with a randomized approach to hit this sequence very rarely. 
Then, one could generate by some (randomized) algorithm an increasing sequence of integers $(M_j)_{j \in \mathbb{N}}$ and consider an irrational $\alpha$ drawn uniformly at random from the unit interval. With high probability, one would expect $\left\lvert S_{M_j}(f,a,q)\right\rvert = o\left(\sum\limits_{i=1}^{K(M_j)} a_i\right)$. 
Theorem \ref{koks_sharp} implies that such an approach will most likely fail. It shows that, almost surely, a positive proportion of those $M_j$ satisfies
\[\lvert S_{M_j}(f,a,q)\rvert \gg \sum_{i=1}^{K(M_j)} a_i
\gg \log M_j\log \log M_j.\] This implies that every low-discrepancy sequence (such as the Kronecker sequences $(\{n\alpha\})_{n \in \mathbb{N}}$ where $\alpha$ is badly approximable) gives a better error bound in numerical integration, regardless of the support of the function $f$ and the chosen algorithm to generate the sequence $(M_j)_{j \in \mathbb{N}}$.\\

Next, assume that $f$ has a singularity, but is still integrable. The singularity makes an application of the classical Denjoy--Koksma inequality impossible since the variation of $f$ is not bounded. However, we can still get a nontrivial bound on $S_{N}(f, \alpha,q)$ provided that the orbit
$\{q + n\alpha: 1 \leq n \leq N\}$ stays away from the singularity. 

\begin{proposition}[Denjoy--Koksma inequality with singularity]
\label{denj_koks_log}
Let $N \in \N, q \in [0,1)$ arbitrary and let $N = \sum_{i=0}^{K(N)-1} b_iq_i$ be its Ostrowski expansion. Assume that $f$ is a $1$-periodic function with a single singularity in $x_1$. Let
$x_1 \in A_N \subseteq [0,1)$, where $A_N$ is an interval (modulo $1$) such that $\{q + n\alpha- x_1\} \notin A_N$ for all $1 \leq n \leq N$. Then,
\begin{equation}
\label{koksma_bound_sing} 
\left\lvert S_{N}(f,\alpha,q)\right\rvert
\ll \sup_{x \in [0,1)\setminus A_N} \lvert f(x)\rvert  \sum_{i =0}^{K(N) -1}b_i
+ N \left\lvert\int_{A_N} f(x) \,\mathrm{d}x \right \rvert.
\end{equation}
In particular, we have, for $K \in \N$, 
\begin{equation*}
    \max_{1 \leq N \leq q_K} | S_N(f, \alpha, q) | \ll \sup_{x \in [0,1)\setminus A_{q_K}} \lvert f(x)\rvert  \sum_{i =1}^{K}a_i+ q_K  \left\lvert\int_{A_{q_{K}}} f(x) \,\mathrm{d}x\right\rvert.
\end{equation*}
\end{proposition}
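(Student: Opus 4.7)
The plan is to reduce the situation to the classical Denjoy\,--\,Koksma inequality by simply truncating $f$ on the neighborhood of the singularity. Concretely, I would define the auxiliary function $\tilde f$ to equal $f$ on $[0,1) \setminus A_N$ and to vanish on $A_N$, extended $1$-periodically. Since $f$ has a unique singular point $x_1$ lying inside $A_N$, and (in the cases covered by Definitions \ref{def_fct1}--\ref{def_fct2}) is monotone on each side of $x_1$ in a fixed neighborhood, the total variation of $\tilde f$ is controlled by $\Var(\tilde f) \ll \sup_{x \in [0,1) \setminus A_N} |f(x)|$, the implied constant absorbing the (finite) variation of $f$ away from $x_1$ together with the two boundary jumps at $\partial A_N$.

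With $\tilde f$ of bounded variation in hand, I would invoke the classical Denjoy\,--\,Koksma inequality block by block along the Ostrowski expansion $N = \sum_{i=0}^{K(N)-1} b_i q_i$. Splitting $\{1,\dots,N\}$ into $\sum_i b_i$ consecutive intervals of lengths $q_i$, each of which represents a translated Kronecker orbit of length $q_i$, classical Denjoy\,--\,Koksma applied to $\tilde f$ on each block yields
\begin{equation*}
\Bigl\lvert \sum_{n=1}^{N} \tilde f(n\alpha + q) - N \int_{0}^{1} \tilde f(x)\, \mathrm{d}x \Bigr\rvert \le \Var(\tilde f) \sum_{i=0}^{K(N)-1} b_i.
\end{equation*}
The avoidance hypothesis $\{q + n\alpha - x_1\} \notin A_N$ for $1 \le n \le N$ guarantees $f(n\alpha+q) = \tilde f(n\alpha+q)$ at every orbit point, while by construction $\int_0^1 f - \int_0^1 \tilde f = \int_{A_N} f$. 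Subtracting $N \int_0^1 f$ and applying the triangle inequality turns the previous display directly into \eqref{koksma_bound_sing}.

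The ``in particular'' consequence then follows by monotonicity of the avoidance condition: if $N \le q_K$, the orbit up to $N$ is contained in the orbit up to $q_K$, so $A_{q_K}$ is still a valid avoidance set for the sum of length $N$, and combined with the standard Ostrowski digit bound $\sum_{i=0}^{K-1} b_i \le \sum_{i=1}^{K} a_i$ one obtains a uniform estimate over $1 \le N \le q_K$. The only real obstacle in this plan is the variation estimate $\Var(\tilde f) \ll \sup_{[0,1) \setminus A_N} |f|$: for a wild integrable $f$ one cannot expect such a bound, but for the singularity types treated in this paper (jump discontinuities and logarithmic peaks) $f$ is monotone on each side of $x_1$ in a fixed neighborhood, so the variation on $[0,1) \setminus A_N$ is comparable to the boundary value of $|f|$ plus a finite constant depending only on $f$, and this is precisely what makes the truncation argument quantitative in the stated form.
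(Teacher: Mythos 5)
Your truncation argument—define $\tilde f$ to equal $f$ off $A_N$ and vanish on $A_N$, apply the classical Denjoy--Koksma inequality to $\tilde f$, and recover the $N\lvert\int_{A_N} f\rvert$ term from the difference of the means—is exactly the paper's own proof, which the authors state in one line immediately after the proposition. Your extra care in justifying $\Var(\tilde f)\ll \sup_{[0,1)\setminus A_N}\lvert f\rvert$ for the monotone-sided singularities considered here is a worthwhile remark the paper leaves implicit, and the passage to the uniform ``in particular'' bound via $\sum_{i=0}^{K-1}b_i\le\sum_{i=1}^{K}a_i$ is also as intended.
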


This bound is not new but was used already in, e.g., \cite{jito,jitoIII, knill_les}.
The statement follows immediately by defining
\[\Tilde{f}(x) := \begin{cases}
f(x) &\text{ if } x \in [0,1)\setminus A_N,
\\ 0 &\text{ otherwise},
\end{cases}\]

and applying the classical Denjoy--Koksma inequality \eqref{koksma_bound} to $\Tilde{f}$.

The following theorem shows that for asymmetric logarithmic singularities, the estimate \eqref{koksma_bound_sing}
is also sharp in the sense of Theorem \ref{koks_sharp},
which in particular extends the consequences for numerical integration to functions with an asymmetric logarithmic singularity.

\begin{thm}\label{koks_sharp_log}
    Let $f$ be a $1$-periodic smooth function with rational asymmetric logarithmic singularity in $x_1$ as in Definition \ref{def_fct2}. For fixed $\alpha \in [0,1), q \in \mathbb{Q}$ and $N \in \mathbb{N}$, we denote by
    $K(N)$ the integer $K$ such that $q_{K-1} \leq N < q_K$. Further, let $A_N = \left (x_1 - g(N),x_1 + g(N) \right )$ be an interval with $g(N) = \min_{n \leq N} \lVert n\alpha + q - x_1 \rVert$.
Then, for almost all $ \alpha \in [0,1)$ and any $0 < r < 1$, there exists a constant $C(r) = C(r,f,q) > 0$ such that both sets
\begin{equation*}\begin{split}
    &\left \{ N \in \N \ | \ S_N( f , \alpha, q ) \geq C(r)  \sup_{x \in [0,1)\setminus A_N}\lvert f(x)\rvert  \sum_{i =1}^{K(N)}a_i
+ N \left\lvert\int_{A_N} f(x)\,\mathrm{d}x\right\rvert \right\},\\ &\left \{ N \in \N \ | \ S_N( f , \alpha ,q) \leq - C(r)\sup_{x \in [0,1)\setminus A_N}\lvert f(x)\rvert  \sum_{i =1}^{K(N)}a_i
- N \left\lvert\int_{A_N} f(x) \,\mathrm{d}x\right\rvert \right\}
    \end{split}
\end{equation*}
have upper density of at least $r$.
\end{thm}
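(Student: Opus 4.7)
The plan is to mirror the proof of Theorem~\ref{koks_sharp} and its strengthening Corollary~\ref{koks_dens1}, with the bounded-variation information used there replaced by the explicit behaviour of $f$ near its asymmetric logarithmic singularity. First I would quantify the two terms on the right of \eqref{koksma_bound_sing} in terms of the continued fraction data of $N$. The explicit form of $f$ near $x_1$ gives $\sup_{x\notin A_N}|f(x)| \asymp \log(1/g(N))$ and $\bigl|\int_{A_N} f(x)\,\mathrm{d}x\bigr| \asymp g(N)\log(1/g(N))$, and for almost every $\alpha$ and every $q \in \Q$, $\log(1/g(N)) \asymp \log q_{K(N)} \asymp \log N$, with constants depending on the rational shift $q - x_1$.

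Next I would call upon the lower-bound engine developed in the proof of Theorem~\ref{logthm}(i). The asymmetry $c_2 \neq 0$ suppresses the near-$x_1$ cancellation that is present in the symmetric case and allows extracting, for almost every $\alpha$ and for $N$ with favourable Ostrowski data $N=\sum_i b_iq_i$, a signed contribution of order $\log N \cdot \sum_i a_i$ from the ``bulk'' of the orbit (i.e.\ those orbit points outside $A_N$), together with a close-encounter term of order $\log(1/g(N))$ arising from the single closest approach of the orbit $\{n\alpha+q\}$ to $x_1$. The close-encounter term matches $N\bigl|\int_{A_N} f\bigr|$ because $Ng(N)\log(1/g(N))\asymp N\bigl|\int_{A_N} f\bigr|$, and the signs of both contributions are governed by the Ostrowski digits and can be prescribed accordingly.

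For the density claim, fix $r<1$ and follow the scheme of Corollary~\ref{koks_dens1}: for almost every $\alpha$, that argument already produces a density-$r$ subset of $N$ on which the bulk contribution attains the required signed lower bound with a constant $C_0(r)$ that degrades as $r\to 1$. Incorporating the close-encounter contribution (which matches the second term in \eqref{koksma_bound_sing} and has sign controlled by the same Ostrowski data) and applying a Borel--Cantelli / counting argument identical in structure to the one in the proof of Corollary~\ref{koks_dens1} delivers upper density at least $r$ in $\N$. As in Corollary~\ref{koks_dens1}, $C(r) \to 0$ as $r \to 1$ because the comparison is with the \emph{exact} Denjoy--Koksma quantity and there is no Khintchine-style slack to absorb large multiplicative constants into a divergent tail series.

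The main obstacle is to make the ``bulk'' and ``close-encounter'' contributions appear \emph{simultaneously}, with a common prescribed sign, on a single set of $N$ of upper density at least $r$. The bulk term is driven by the Ostrowski digits and the partial quotients up to $K(N)$, while the close-encounter term depends on how deeply the orbit dips into $A_N$, a quantity controlled by the tail partial quotients $a_{K(N)+1}, a_{K(N)+2},\ldots$; synchronising the signs and sizes of these two mechanisms on a density-$r$ set of $N$ is the core technical content, and is accomplished by the blockwise decomposition and sign-tracking analysis inherited from the proof of Theorem~\ref{logthm}(i).
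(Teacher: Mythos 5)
Your overall scaffolding—reuse the lower-bound machinery from the proof of Theorem~\ref{logthm}(i) on the subsequence $(k_j)$ where $a_{k_j}$ dominates the preceding partial quotients, and then compare $S_N$ against the Denjoy--Koksma upper bound with singularity—is the right idea and matches the paper's architecture. Where the proposal goes astray is in its treatment of the term $N\bigl|\int_{A_N}f\bigr|$. You treat it as a genuine ``close-encounter'' contribution of the same order as the nearest approach $\log(1/g(N))$, and you then identify as the central technical obstacle the task of synchronizing its sign with the bulk contribution on a density-$r$ set. In the paper's proof that synchronization problem simply does not arise, because the integral term is negligible: using $g(N)\ll 1/q_K$ together with $|\log g(N)|\ll K$ (the latter from Proposition~\ref{PropDistanceNalphaToq} and Khintchine--L\'evy \eqref{Eq_size_of_q_k}) and $N<q_K$, one gets $N\bigl|\int_{A_N}f\bigr|\ll NK/q_K\ll\log q_K$, which on the subsequence $(k_j)$ is $o(a_{k_j})$. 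Meanwhile the first summand of \eqref{koksma_bound_sing}, $\sup_{x\notin A_N}|f|\cdot\sum_{i\le K(N)}a_i$, is $\asymp a_{k_j}\log q_{k_j-1}$ there. Thus the integral term is a full factor $\sim a_{k_j}$ smaller and is absorbed as an error; the estimate $S_N(f_1,\alpha)\ge\tfrac{c(\delta')}{2}a_{k_j}\log q_{k_j-1}$ from \eqref{f1_estim}, together with the bound $|S_N(f_2,\alpha)|\ll k_j^2\log k_j=o(a_{k_j}\log q_{k_j-1})$ from Lemma~\ref{final_symm_log_estim}, then dominates the entire Denjoy--Koksma right-hand side after a single choice of $C(r)=C(\delta')$. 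No sign-tracking of a second mechanism is required.

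Your supporting size claim is also not right: the equivalence implicit in ``the close-encounter term matches $N\bigl|\int_{A_N}f\bigr|$'', namely $Ng(N)\log(1/g(N))\asymp\log(1/g(N))$, would require $Ng(N)\asymp 1$, but Proposition~\ref{PropDistanceNalphaToq} only gives $g(N)\gg\|q_{K+1}\alpha\|/s$, so $g(N)$ can be as small as roughly $1/q_{K+c}$ rather than $\asymp 1/N$, and the two sides can differ by an unbounded factor. The idea missing from your proposal—and the thing that makes the paper's proof of this theorem short—is the observation that the second summand in \eqref{koksma_bound_sing} is $o$ of the first on the subsequence you are already working with, so no separate handling of it is needed.
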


\section{Prerequisites}\label{prereq}
\subsection{Notation}\label{notation}

Given two functions $f,g:(0,\infty)\to \mathbb{R},$ we write $f(t) = O(g(t)), f \ll g$ or $g \gg f$ if
$\limsup_{t\to\infty} \frac{|f(t)|}{|g(t)|} < \infty$. Any  dependence of the value of the limes superior above on potential parameters is denoted by appropriate subscripts. For two sequences $ (a_k)_{k \in \N}$ and $(b_k)_{k \in \N}$ with $b_k \neq 0$ for all $k \in \N$, we write $a_k \sim b_k, k \to \infty$, if $ \lim_{k \to \infty} \frac{a_k}{b_k}=1$.
Given a real number $x\in \mathbb{R}$, we write $\{x\}= x - \lfloor x \rfloor$ for the fractional part of $x$ and $\|x\|=\min\{|x-k|: k\in\mathbb{Z}\}$ for the distance of $x$ to its nearest integer.
We denote the characteristic function of a set $A$ by $\mathds{1}_A$ and understand
the value of empty sums as $0$. We denote the cardinality of a set $A \subseteq \N$ as $ \lvert A \rvert $. For shorter notation, we write $S_N(f,\alpha) := S_N(f,\alpha,0)$. Let $ \sigma \big( X_1, X_2, \ldots \big)$ denote the $\sigma$-algebra generated by random variables $X_1, X_2, \ldots $.

\subsection{Continued fractions}\label{cf_prop}
In this subsection, we collect all classical results from the theory of continued fractions that we need to prove our main results. Every irrational $\alpha \in \R$ has a unique infinite continued fraction expansion denoted by $[a_0;a_1,a_2,\dots ]$ with convergents $p_k/q_k := [a_0;a_1, \dots, a_k]$ that satisfy the recursions
\begin{equation*}
 p_{k+1}  = p_{k+1}(\alpha) = a_{k+1}(\alpha) p_k + p_{k-1}, \qquad q_{k+1}  = q_{k+1}(\alpha) = a_{k+1}(\alpha) q_k + q_{k-1}, \quad k \in \N,
\end{equation*}
with initial values $p_0 = a_0,\; p_1 = a_1a_0 +1,\; q_0 = 1,\; q_1 = a_1$.
For the sake of brevity, we just write $a_k, p_k, q_k$, although these quantities depend on $\alpha$. We know that $p_k/q_k$ are good approximations for $\alpha $ and satisfy the inequalities
\begin{equation*}
\frac{1}{q_{k+1} + q_k} \leq \delta_k := | q_k \alpha - p_k | \leq \frac{1}{q_{k+1}}, \quad k \in \N.
\end{equation*}

Fixing an irrational $\alpha$, the Ostrowski expansion of a non-negative integer $N$ is the unique representation
\begin{equation*}
    N= \sum_{\ell=0}^{K-1} b_{\ell} q_{\ell},
\end{equation*}
where $b_{K-1} \neq 0$, $0 \leq b_0 < a_1$, $ 0 \leq b_{\ell} \leq a_{\ell+1} $ for $ 1 \leq \ell \leq K-1$  and if $b_{\ell} = a_{\ell}$, then $ b_{\ell-1}= 0$.

So far all statements can be made for arbitrary irrational numbers, but since this article considers the almost sure behaviour, we make use of the well-studied area of the metric theory of continued fractions. We state several classical results below which hold for almost every $\alpha \in \R$. We will use them frequently in the proofs of our results.

\begin{itemize}
    \item The Borel-Bernstein Theorem (\cite{bernstein}, see also \cite{bugeaud} for a more modern formulation without monotonicity condition): For any non-negative function $\psi: \mathbb{\R_+ } \to \mathbb{R}_{+}$, we have
    \begin{equation}
        \label{Eqbernstein}
        \left | \left\{k \in \mathbb{N}: a_k > \psi(k)\right\} \right | \text{ is } \begin{cases} \text{ infinite }\hspace{5mm}&\text{if } \sum\limits_{k = 0}^{\infty} \frac{1}{\psi(k)} = \infty,
        \\ \text{ finite} \hspace{5mm} &\text{if }\sum\limits_{k = 0}^{\infty} \frac{1}{\psi(k)} < \infty.\end{cases}
    \end{equation}
\item (Diamond and Vaaler \cite{diamond_vaaler}):
\begin{equation}
\label{Eq_trimmed_sum}
\sum_{\ell \leq K} a_{\ell} - \max\limits_{\ell \leq K} a_{\ell} \sim \frac{K \log K}{\log 2}, \quad K \to \infty.
\end{equation} 
\item (Khintchine and L\'{e}vy, see, e.g., \cite[Chapter 5, §9, Theorem 1]{rock_sz}):
\begin{equation}
\label{Eq_size_of_q_k}
\log q_k \sim \tfrac{\pi^2}{12 \log 2} k \text{ as } k \to \infty.
\end{equation}
\end{itemize}

\section{Functions with discontinuities}
\label{SubsectionProofThmDiscFunctions}

We start this section with a decomposition lemma that is of a similar form to \cite[Appendix A]{quenched}. 

\begin{lem}[Decomposition Lemma]
\label{LemRepresentationf}
Let $f : \R \rightarrow \R$ be as in Definition \ref{def_fct1}, i.e. $f$ is piecewise smooth with (possible) discontinuities at $0 \leq x_1 < \ldots < x_{\nu} < 1$. 
Let 
$g:\R \to \R$ be defined as \[g(x) = \left ( \{x\} - \frac{1}{2} \right) \sum_{i=1}^{\nu} A_i + \sum_{i=1}^{\nu} c_i \left(  \mathbb{1}_{[x_{i-1}, x_i)}(x) -(x_i - x_{i-1} ) \right),\]
where 
$A_i := \lim_{ \delta \rightarrow 0} \left[f(x_i - \delta) - f(x_i + \delta) \right]$, $c_i := \sum_{j=1}^{i}A_j$ and $x_{0} := 0$.
Then, for any $N \in \N, q \in \Q$ and for almost every $\alpha$, we have
\[S_N(f,\alpha,q) = S_N(g,\alpha,q) + O(1).\]

If $f$ satisfies
\begin{equation}\label{aici_condition} \sum_{i =1}^{\nu}A_i \neq 0 \text{ or } \sum_{i =1}^{\nu}c_i(x_{i}-x_{i-1}) \neq 0,\end{equation}
then also $f_y(x) := f(x+y)$, for any $y \in [0,1)$, satisfies \eqref{aici_condition}.
\end{lem}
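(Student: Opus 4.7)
The plan is to construct $g$ so that $f - g$ is smooth enough for its Birkhoff sums to stay bounded almost surely, and then to check that condition \eqref{aici_condition} survives translation.

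For the first claim, I would argue that $g$ has been chosen precisely to absorb every jump of $f$. The sawtooth $(\{x\} - 1/2) \sum_i A_i$ contributes the net jump $A = \sum_i A_i$, and the step-function part $\sum_i c_i (\mathbb{1}_{[x_{i-1}, x_i)} - (x_i - x_{i-1}))$, read cyclically on the circle, redistributes the remaining jumps so that the total jump of $g$ at each $x_i$ equals $A_i$. Consequently $f - g$ is continuous on $\mathbb{R}$. Since the sawtooth differentiates to $1$ and the indicators to $0$, one has $g' \equiv A$ almost everywhere, and therefore $(f - g)' = f' - A$ remains of bounded variation on $[0, 1)$ by Definition \ref{def_fct1}. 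Two integrations by parts then yield the Fourier decay $\widehat{f - g}(n) = O(1/n^2)$, and the classical quenched result cited in the introduction (see \cite{quenched, herman}) gives $S_N(f - g, \alpha, q) = O(1)$ for almost every $\alpha$ and every $q$. Linearity of $S_N$ in its first argument is the first claim.

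For the second claim, the first alternative of \eqref{aici_condition} is automatic, since the discontinuities of $f_y$ are the cyclically shifted points $x_i - y \pmod{1}$ and their jumps are only permuted, so $\sum_i \widetilde{A}_i = \sum_i A_i$. It therefore suffices to handle the regime $\sum_i A_i = 0$ and $\sum_i c_i(x_i - x_{i-1}) \neq 0$. My plan is to identify the genuinely translation-invariant quantity behind the second alternative: the cumulative jump function $C(x) := c_i$ on $[x_{i-1}, x_i)$ (with the convention $C \equiv 0$ on $[x_\nu, 1)$) realises the non-sawtooth component of $g$ as $C(x) - \sum_i c_i(x_i - x_{i-1})$, and because $A = 0$ forces $C$ to be continuous across the wrap-around point, the relevant invariant is the circle integral of $C$. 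This integral is preserved by translation, so the analogous sum for $f_y$ equals the original one up to an explicit constant shift, which keeps the non-vanishing intact once the indexing conventions for $f_y$ are handled carefully.

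The expected obstacle is precisely this final reconciliation of conventions. When the shift $y$ coincides with one of the $x_i$, the first interval $[\widetilde{x}_0, \widetilde{x}_1)$ of the Ostrowski-style expansion for $f_y$ degenerates, and the non-cyclic sum $\sum_j \widetilde{c}_j(\widetilde{x}_j - \widetilde{x}_{j-1})$ can collapse spuriously. A case analysis according to which subinterval $(x_{k-1}, x_k]$ contains $y$, followed by a telescoping identity that uses $c_\nu = A = 0$, handles this bookkeeping cleanly, after which the non-vanishing statement follows directly from the translation-invariance of the Lebesgue integral on $\mathbb{T}$.
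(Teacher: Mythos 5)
Your strategy for the first part is the same as the paper's: isolate the jumps of $f$ in the explicit sawtooth-plus-step function $g$, observe that the remainder is a differentiable function with a derivative of bounded variation, and conclude $O(1)$ Birkhoff sums for the difference. The paper executes this by first introducing $\varphi(x) := f(x) - \sum_i A_i\bigl(\{x - x_i\} - \tfrac{1}{2}\bigr)$, which is continuous by construction since each $A_i\bigl(\{x - x_i\} - \tfrac{1}{2}\bigr)$ has jump exactly $A_i$ at $x_i$, and then rewriting $\sum_i A_i\bigl(\{x - x_i\} - \tfrac{1}{2}\bigr)$ as $g$ via the identity $\{x - x_i\} - \tfrac{1}{2} = \{x\} - \tfrac{1}{2} - x_i + \mathbb{1}_{[0,x_i)}(x)$ followed by an Abel-type rearrangement of the index $i$. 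You assert instead that ``the total jump of $g$ at each $x_i$ equals $A_i$'' and take this on faith. That is precisely the nontrivial step, and it does not reduce to inspection: a direct computation of the jump of the step part $\sum_i c_i\,\mathbb{1}_{[x_{i-1},x_i)}$ at $x_k$, $1 \leq k \leq \nu-1$, yields $c_k - c_{k+1} = -A_{k+1}$, and the sawtooth is continuous at $x_k \in (0,1)$ so it contributes nothing there. You therefore need to supply the same summation-by-parts the paper performs rather than asserting the conclusion; as written this is a genuine gap.

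For the second claim you go further than the paper, which only records it as ``a slightly longer, but elementary calculation'', but the route you sketch contains a false step. The assertion that ``$A = 0$ forces $C$ to be continuous across the wrap-around point'' is wrong: by your own definition $C \equiv c_1$ on $[x_0, x_1) = [0,x_1)$ while $C \equiv 0$ on $[x_\nu, 1)$, so $C$ jumps by $-c_1 = -A_1$ at $0$ for \emph{every} value of $A = \sum_i A_i$. Consequently the circle integral of $C$ is not intrinsic to $f$ modulo rotation: the normalisation ``$C \equiv 0$ on $[x_\nu, 1)$'' is anchored to the base point $0$, which is exactly what moves under translation, and the integral shifts by a partial sum $c_k$ depending on where $y$ lands relative to the $x_i$. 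The ``indexing conventions'' you defer to a final reconciliation are therefore not bookkeeping; they are the whole content of the statement. A proof needs the explicit finite-sum comparison of $\sum_j \widetilde{c}_j(\widetilde{x}_j - \widetilde{x}_{j-1})$ with $\sum_i c_i(x_i - x_{i-1})$ under the relabelling $x_i \mapsto \{x_i - y\}$, and the degenerate case where $y$ coincides with some $x_i$ (so that $\widetilde{x}_1 = 0$ and the first interval collapses) should be treated explicitly rather than waved away, since it is exactly the case where the quantity $\sum_j \widetilde{c}_j(\widetilde{x}_j - \widetilde{x}_{j-1})$ is most prone to vanishing.
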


\begin{proof}
We define $\varphi : \R \rightarrow \R$ as $ \varphi(x) := f(x) - \sum_{i=1}^{\nu} A_i \left( \{ x - x_i\} - \frac{1}{2} \right)$. 
Further, we see $ \{ x - x_i \} - \frac{1}{2} = \{ x \} - \frac{1}{2} - x_i + \mathbb{1}_{[0, x_i)} (x)$ and hence
\begin{align*}
f(x) &= \sum_{i=1}^{\nu}  A_i \left( \{ x - x_i \} - \frac{1}{2} \right) + \varphi(x)   \\
     &= \left( \{ x  \} - \frac{1}{2} \right) \sum_{i=1}^{\nu}  A_i  + \sum_{i=1}^{\nu}  A_i \left( \mathbb{1}_{[0, x_i)} (x) - x_i \right)  +  \varphi(x) \\
     &= \left( \{ x  \} - \frac{1}{2} \right) \sum_{i=1}^{\nu}  A_i  + \sum_{i=1}^{\nu}  c_i \left( \mathbb{1}_{[x_{i-1}, x_i)} (x) -(x_i - x_{i-1} ) \right)  +  \varphi(x)
     \\& = g(x) + \varphi(x).
\end{align*}
By the choice of $f$, there exists a 
function $\psi$ that is differentiable with $\psi'$ being of bounded variation and
 $\psi_{\big|[0,1)\setminus \{x_1,\ldots,x_{\nu}\}} = \varphi_{\big|[0,1)\setminus \{x_1,\ldots,x_{\nu}\}}$. By the properties of $\psi$, we get $S_N(\psi,\alpha,q) = O(1)$ (see Appendix A in \cite{quenched}). Since $\alpha$ is irrational and $q \in \Q$,
 we have $\psi(n\alpha + q) = \varphi(n\alpha + q)$ for any $n \in \N$ and thus,
 \[S_N(f,\alpha,q) = S_N(g,\alpha,q) + S_N(\psi,\alpha,q) =  S_N(g,\alpha,q) + O(1),\] 

which proves the first part of the statement.
For the second part, one sees immediately that $\sum_{i =1}^{\nu}A_i$ is invariant under translation. By a slightly longer, but elementary calculation, one finds that under the assumption of $\sum_{i =1}^{\nu}A_i = 0$, also $\sum_{i =1}^{\nu}c_i(x_{i}-x_{i-1}) \neq 0$ is invariant under translation.
\end{proof}

The definition of the quantities $A_i,c_i$ in Lemma \ref{LemRepresentationf} allows us to state stronger, but more technically involved versions of Theorem \ref{ThmUpperDensDiscFunctions} respectively Theorem \ref{koks_sharp}. This refinement will also immediately imply Corollary \ref{special_fct_dens_1} and Corollary \ref{koks_dens1}.

\begin{thm}
\label{ThmUpperDensDiscFunctionsRefined}
Let $ \psi : \R_+ \rightarrow \R_+$ be a monotonically increasing function with $ \sum_{k =1}^{\infty}\frac{1}{\psi(k)} = \infty$ and let $ f : \mathbb{R} \rightarrow \R$ be as in Definition \ref{def_fct1}, i.e. $f$ is essentially smooth with (possible) discontinuities at finitely many rationals $ 0 \leq x_1 < \ldots < x_{ \nu} < 1$. Let $A_i = \lim_{ \delta \rightarrow 0} \left[ f(x_i - \delta) - f(x_i+ \delta) \right]  $, $ c_i = \sum_{j=1}^i A_j$ for $i \in \{1, \ldots , \nu \} $ and set $x_0 =0$. If $\;\sum\limits_{i = 1}^{ \nu} A_i \neq  0 $ or $ \sum\limits_{i=1}^{ \nu} c_i ( x_{i} - x_{i-1}) \neq 0$, for almost all $\alpha \in [0,1)$ and all $q \in \mathbb{Q}$, both sets
\begin{equation}
\label{EqSetsUpperDensThmRefined}
\left \{ N \in \N \ | \ S_N(f, \alpha,q) \geq \psi( \log N )  \right \}, \qquad \left \{ N \in \N \ | \ S_N(f, \alpha,q) \leq - \psi( \log N )  \right \}
\end{equation}
have upper density $1$. 
\par{}
If $\;\sum_{i = 1}^{ \nu} A_i = 0 $ and $ \sum_{i=1}^{ \nu} c_i ( x_{i} - x_{i-1}) = 0$, for almost all $\alpha \in [0,1)$ and all $q \in \mathbb{Q}$, both sets in \eqref{EqSetsUpperDensThmRefined} have positive upper density.  
\end{thm}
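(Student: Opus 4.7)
The plan is to invoke Lemma \ref{LemRepresentationf} to replace $f$ by the explicit model
\[g(x) = \Bigl(\{x\}-\tfrac12\Bigr)\sum_{i=1}^\nu A_i + \sum_{i=1}^\nu c_i\bigl(\mathbb{1}_{[x_{i-1},x_i)}(x) - (x_i-x_{i-1})\bigr),\]
since $S_N(f,\alpha,q) = S_N(g,\alpha,q)+O(1)$. The rationality of all the $x_i$ and of $q$ means that the orbit $(\{q+n\alpha\})_{n \in \mathbb{N}}$ interacts with the discontinuities of $g$ in a structurally controlled way, making closed-form expressions for $S_{q_k}(g,\alpha,q)$ available.

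The core analytic step is to establish an expansion
\[S_{q_k}(g,\alpha,q) = (-1)^k\,\Lambda(\alpha,q) + O\!\left(\tfrac{1}{a_{k+1}}\right),\]
with an explicit constant $\Lambda(\alpha,q)$ whose vanishing is controlled exactly by the hypothesis: $\Lambda(\alpha,q) \neq 0$ precisely when $\sum A_i \neq 0$ or $\sum c_i(x_i-x_{i-1})\neq 0$. The sawtooth piece contributes via the classical Ostrowski identity for $S_{q_k}(\{\cdot\}-\tfrac12,\alpha,q)$ (exploiting the alternating sign of $q_k\alpha - p_k$); for each indicator summand one uses the rationality of $x_i,q$ to evaluate the discrepancy-type sum exactly up to the same $O(1/a_{k+1})$ correction. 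The translation-invariance clause in Lemma \ref{LemRepresentationf} guarantees that non-vanishing of $\Lambda$ survives the shift by $q$. For the Ostrowski amplification, note that for $1\le m\le a_{k+1}$ the iterates $(j q_k\alpha)_{j=1}^m$ lie in an interval of length $m\delta_k\le 1$, so no discontinuity of $g$ is crossed between successive iterates after translation, giving
\[S_{mq_k}(g,\alpha,q') = m\,S_{q_k}(g,\alpha,q')+O(1) = (-1)^k m\,\Lambda(\alpha,q')+O(1).\]
Writing any $N\in[q_K,q_{K+1})$ as $N=b_K q_K + R$ with $R<q_K$, applying Denjoy--Koksma to $R$, and using \eqref{Eq_trimmed_sum} yields
\[S_N(g,\alpha,q) = (-1)^K b_K\,\Lambda(\alpha,q) + O(K\log K).\]

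The upper-density-$1$ conclusion then comes from a Borel--Bernstein thickening. Fix $\varepsilon>0$ and choose a monotone $\tilde\psi(k) \ge \varepsilon^{-1}\psi(k) + C_0 k\log k$ with $\sum_k 1/\tilde\psi(k)=\infty$ — always possible, as either $\psi(k)\lesssim k\log k$ (so $\tilde\psi \asymp k\log k$) or $\psi(k)\gtrsim k\log k$ (so $\tilde\psi \asymp \varepsilon^{-1}\psi$), and divergence is preserved in both cases. By \eqref{Eqbernstein} applied to even indices (monotonicity guarantees $\sum_{k\text{ even}}1/\tilde\psi(k)=\infty$), the set $\{K\text{ even}:a_{K+1}>\tilde\psi(K)\}$ is a.s.\ infinite. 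For each such $K$, the $N\in[q_K,q_{K+1})$ with $b_K\ge(\psi(K)+C_0 K\log K)/|\Lambda|$ — a $(1-O(\varepsilon))$-fraction of the window by the choice of $\tilde\psi$ — satisfy $S_N(g,\alpha,q)\ge\psi(K)\asymp\psi(\log N)$ via \eqref{Eq_size_of_q_k}. Since $|[q_K,q_{K+1})|/q_{K+1}\to 1$ as $a_{K+1}\to\infty$, the upper density of the positive set in \eqref{EqSetsUpperDensThmRefined} is at least $1-O(\varepsilon)$; letting $\varepsilon\to 0$ along a countable sequence yields upper density $1$, and the odd-indexed variant handles the negative set. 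In the degenerate case $\Lambda\equiv 0$, one extracts the next-order term in the expansion of $S_{q_k}(g,\alpha,q)$, which still produces a linear-in-$b_K$ contribution but with a smaller, possibly $\alpha$-dependent coefficient; the Borel--Bernstein amplification then yields positive, but not full, upper density.

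The main obstacle will be establishing the exact form and vanishing law of $\Lambda(\alpha,q)$, and carrying out the corresponding refined expansion in the degenerate case — as these precisely encode the algebraic hypothesis on the $A_i,c_i,x_i$ into quantitative Birkhoff sum bounds, while everything else reduces to a routine combination of continued-fraction estimates with the Borel--Bernstein amplification.
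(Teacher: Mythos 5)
Your proposal contains a fundamental misconception that breaks the entire argument. You posit an expansion
\[
S_{q_k}(g,\alpha,q) = (-1)^k\,\Lambda(\alpha,q) + O\!\left(\tfrac{1}{a_{k+1}}\right)
\]
with a single constant $\Lambda(\alpha,q)$ independent of $k$, and claim its vanishing is characterised by $\sum A_i = 0$ and $\sum c_i(x_i-x_{i-1})=0$. This is wrong. As the paper's Lemma~\ref{prop_telescopic_part} makes explicit, the indicator pieces of $g$ contribute a leading coefficient involving fractional parts $\{r_i q_{k-1}/s_i\}$ and indicators $\mathds{1}_{\{s_i \mid q_{k-1}\}}$, which \emph{depend on the residue class of $q_{k-1}$ modulo $s_1\cdots s_\nu$} and therefore change with $k$. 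There is no fixed constant $\Lambda(\alpha,q)$; the putative leading coefficient can be zero for some residue classes of $q_{k-1}$ and nonzero for others even when $\sum A_i \neq 0$. Consequently your claimed vanishing criterion for $\Lambda$, and the whole reduction built on it, does not hold.

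This leads to the second, consequential gap: your Borel--Bernstein amplification only handles the parity of $K$. The correct argument must also force $q_{K-1}$ into a chosen residue class modulo $s_1\cdots s_\nu$ (so that the leading coefficient is bounded away from zero with the right sign), simultaneously with $a_K$ being large and $\sum_{i<K}a_i$ being small. Restricting \eqref{Eqbernstein} to even indices does not control $q_{K-1} \bmod d$, since $q_{K-1}$ is a nonlinear function of all earlier partial quotients. The paper needs a genuinely refined Borel--Bernstein theorem (Lemma~\ref{LemInfiniteSets}), built on Kesten's mixing bound for $q_{m+p} \bmod v$ conditioned on $a_1,\dots,a_m$ (Lemma~\ref{LemKesten}) combined with a quasi-independent Borel--Cantelli lemma (Lemma~\ref{LemBorelCantelli}). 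This is the crux of the proof and is entirely absent from your proposal. There are also two smaller issues: the linearity claim $S_{mq_k}(g,\alpha,q') = m\,S_{q_k}(g,\alpha,q')+O(1)$ fails for the sawtooth component, which has a genuine $m^2/a_{k+1}$ term (Lemma~\ref{LemAsymptoticsSN}); and your treatment of the degenerate case by ``extracting the next-order term'' does not match what actually happens — in the paper (Lemma~\ref{lem_case_posud}) the leading linear coefficient vanishes identically, and the usable contribution only appears when $b_{K-1}$ is in a \emph{proper sub-range} $[c'a_K, ca_K]$, which is precisely why the conclusion weakens to positive (not full) upper density.
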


\begin{thm}
\label{ThmKoksRefined}
Let $ \psi,f, \nu , A_i$ and $ c_i$ for $i \in \{1, \ldots, \nu\}$ be as in Theorem \ref{ThmUpperDensDiscFunctionsRefined}. If $\,\sum\limits_{i = 1}^{ \nu} A_i \neq  0 $ or $\;\sum\limits_{i=1}^{ \nu} c_i (x_{i} - x_{i-1}) \neq 0$, for almost all $\alpha \in [0,1)$ and all $q \in \mathbb{Q}$, we have the following:

For any $\,0 < r < 1$, there exists a constant $C(r) = C(r,f,q) > 0$ such that both sets
\begin{equation*}
    \left \{ N \in \N \ \Big| \ S_N( f , \alpha,q ) \geq C(r)  \sum_{i =1}^{K(N)}a_i \right\}, \qquad \left \{ N \in \N \ \Big| \ S_N( f , \alpha ,q) \leq - C(r) \sum_{i =1}^{K(N)}a_i \right\}
\end{equation*}
have upper density of at least $r$. Here $K(N)$ denotes the integer such that
$q_{K(N)-1} \leq N < q_{K(N)}$.
\par{}
If $\;\sum_{i = 1}^{ \nu} A_i = 0 $ and $\;\sum_{i=1}^{ \nu} c_i ( x_{i} - x_{i-1}) = 0$, for almost all $\alpha \in [0,1)$, there exist constants $r_0 = r_0(f,\alpha,q) > 0$ and $C = C(f,\alpha,q) >0$ such that both sets
\begin{equation*}
    \left \{ N \in \N \ \Big| \ S_N( f , \alpha,q ) \geq C  \sum_{i =1}^{K(N)}a_i \right\}, \qquad \left \{ N \in \N \ \Big| \ S_N( f , \alpha,q ) \leq - C \sum_{i =1}^{K(N)}a_i \right\}
\end{equation*}
have upper density of at least $r_0$.
\end{thm}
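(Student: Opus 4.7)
The plan is to bootstrap from Theorem \ref{ThmUpperDensDiscFunctionsRefined} via a dichotomy on the size of the largest partial quotient appearing up to level $K(N)$. First, Lemma \ref{LemRepresentationf} reduces the study of $S_N(f,\alpha,q)$ to $S_N(g,\alpha,q)+O(1)$, where
\[g(x) = A\,\phi(x) + H(x),\]
with $A=\sum_{i=1}^{\nu}A_i$, $\phi(x)=\{x\}-\tfrac{1}{2}$ the classical sawtooth, and $H$ a finite linear combination of centred indicators of rational intervals. Since $q\in\Q$, translating the break points $x_i\mapsto\{x_i-q\}$ preserves the class of Definition \ref{def_fct1} (by the second part of Lemma \ref{LemRepresentationf}), so we may assume $q=0$. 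Set $\Sigma_N:=\sum_{i=1}^{K(N)}a_i$ and $M_N:=\max_{i\le K(N)}a_i$; then \eqref{Eq_trimmed_sum} combined with \eqref{Eq_size_of_q_k} yields $\Sigma_N-M_N\asymp \log N\,\log\log N$ for almost every $\alpha$.

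Accordingly, split $\N$ into a \emph{tame} set $T:=\bigl\{N:M_N\le \tfrac{1}{2}\Sigma_N\bigr\}$ and a \emph{wild} set $W:=\N\setminus T$. On $T$ one has $\Sigma_N\le 2(\Sigma_N-M_N)\ll\log N\,\log\log N$, so Theorem \ref{ThmUpperDensDiscFunctionsRefined} with $\psi(k)=B k\log k$, whose reciprocal sums diverge for every $B$, yields $\pm S_N(f,\alpha,q)\ge B\log N\,\log\log N$ on a set of upper density $1$. Restricting to $T$ gives $\pm S_N(f,\alpha,q)\gg B\,\Sigma_N$, and choosing $B$ large enough produces the required constant $C(r)$ purely from the tame regime.

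The main obstacle is the wild regime, where $\Sigma_N\asymp M_N$ can grow much faster than $\log N\,\log\log N$ and Theorem \ref{ThmUpperDensDiscFunctionsRefined} is too weak. Here I would work directly with the Ostrowski expansion $N=\sum_{i=0}^{K(N)-1}b_iq_i$: a standard Koksma-style analysis of $S_N(\phi,\alpha,0)$ (in the spirit of \cite{borda}) shows that for any level $k^{*}$ with $a_{k^{*}}$ dominating the sum of the lower-order partial quotients, the $(k^{*}-1)$-st Ostrowski digit contributes a term of order $(-1)^{k^{*}-1} b_{k^{*}-1}/2$ to the sawtooth sum; combined with the Denjoy--Koksma bound $|S_N(H,\alpha,0)|\ll 1+\sum_i b_i$, this yields
\[|S_N(g,\alpha,0)|\;\gg\;|A|\,b_{k^{*}-1}\qquad\text{whenever}\qquad b_{k^{*}-1}\ge c\,a_{k^{*}}.\]
For any fixed $c\in(0,1)$, the fraction of $N\in[1,q_K]$ (with $K\ge k^{*}$) satisfying $b_{k^{*}-1}\ge c\,a_{k^{*}}$ is at least $1-c+o(1)$, so on the wild set, where $a_{k^{*}}\asymp \Sigma_N$, this gives $|S_N(g,\alpha,0)|\gg c\,\Sigma_N$ on a set of upper density at least $1-c$. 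Running $k^{*}$ through large partial quotients of even versus odd parity — both of which occur infinitely often for almost every $\alpha$ — captures both signs, and combining with the tame regime yields the required density $\ge r$ for every $r<1$.

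In the degenerate case $A=0$ and $\sum_{i=1}^{\nu}c_i(x_i-x_{i-1})=0$, the sawtooth piece of $g$ vanishes and only the zero-mean rational step function $H$ remains; the Denjoy--Koksma bound $|S_{q_k}(H,\alpha,0)|=O(1)$ then prevents the sharp Ostrowski-level argument above, but by aligning several Ostrowski digits with the jump points of $H$ one can still force $|S_N(H,\alpha,0)|\gg \Sigma_N$ on a positive-density subset. The absence of a single dominating digit prevents pushing the density arbitrarily close to $1$, which is precisely why only the uniform constants $r_0,C>0$ appear in the second part of the statement.
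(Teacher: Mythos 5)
Your tame/wild dichotomy is a genuinely different organizing scheme from the paper's, but both halves of it have gaps that cannot be patched in the form stated.

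\textbf{The tame regime is broken for two independent reasons.} First, the Denjoy--Koksma inequality gives the unconditional upper bound $\lvert S_N(f,\alpha,q)\rvert \le \Var(f)\,\Sigma_N$ for \emph{every} $N$, so the constant in any lower bound of the form $\lvert S_N\rvert \ge C\,\Sigma_N$ can never exceed $\Var(f)$; it is not possible to ``choose $B$ large enough'' and have both $\lvert S_N\rvert\ge B\log N\log\log N$ and $\Sigma_N\ll\log N\log\log N$ hold simultaneously unless $B$ is bounded. Second, and more fundamentally, the upper-density-$1$ set produced by Theorem \ref{ThmUpperDensDiscFunctionsRefined} is achieved along the subsequence $M_j=\lfloor c\,a_{k_j}q_{k_j-1}\rfloor$ constructed in its proof, and for $q_{k_j-1}\le N\le M_j$ one has $K(N)=k_j$ and $a_{k_j}$ dominating $\sum_{i<k_j}a_i$; in your notation these $N$ lie in $W$, not $T$. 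Restricting the density-$1$ set to $T$ therefore produces a set of density zero, and the tame regime contributes nothing.

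\textbf{The wild regime is the right idea but omits the crucial sign control.} Your estimate combines the sawtooth contribution $(-1)^{k^*-1}b_{k^*-1}/2$ with the Denjoy--Koksma bound $\lvert S_N(H,\alpha,0)\rvert\ll \sum_i b_i$, but when $b_{k^*-1}$ is the dominating Ostrowski digit, $\sum_i b_i\asymp b_{k^*-1}$ and the $H$-contribution is of the \emph{same} order as the sawtooth term. One cannot conclude $\lvert S_N(g,\alpha,0)\rvert\gg \lvert A\rvert\,b_{k^*-1}$ from an absolute-value bound on the error: the two pieces may cancel. The paper resolves this precisely by forcing $q_{k^*-1}$ into a prescribed congruence class modulo the denominators of the $x_i$ (so that the $H$-term has known sign, or vanishes, by Lemma \ref{prop_telescopic_part}), together with a prescribed parity of $k^*$ for the sign of the sawtooth term; the existence of infinitely many such $k^*$ with $a_{k^*}$ dominating is exactly what Lemma \ref{LemInfiniteSets} (the refined Borel--Bernstein theorem) supplies, and Lemma \ref{lem_case_ud1} / Lemma \ref{lem_case_posud} package the sign control. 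Nothing in your sketch plays the role of these lemmas. The same issue surfaces even more severely in the degenerate case $A=0$, $\sum c_i(x_i-x_{i-1})=0$, where the sawtooth term is absent and \emph{only} the sign of the $H$-contribution matters; ``aligning several Ostrowski digits with the jump points'' is an intuition, not an argument. In summary, the wild-regime heuristic is aligned with the paper's approach, but it needs the congruence machinery of Section \ref{SubsectionProofThmDiscFunctions} to be made rigorous, and the tame regime should be abandoned.
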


Naturally, the question arises whether the conditions
$\sum_{i =1}^{\nu} A_i \neq 0$ or $\sum_{i=1}^{\nu} c_i(x_{i}-x_{i-1}) \neq 0$ give an exact
characterization of functions that satisfy the statements in Theorem \ref{ThmUpperDensDiscFunctionsRefined} and Theorem \ref{ThmKoksRefined}. We show that these assumptions are not necessary, but without any condition on the interplay of the location of the discontinuities and their jump heights, one cannot hope to achieve upper density $1$. In fact, we provide two classes of functions that, in general, do not satisfy $\sum_{i =1}^{\nu} A_i \neq 0$ or $\sum_{i=1}^{\nu} c_i(x_{i}-x_{i-1}) \neq 0$; for one class, the sets
\begin{equation*}
\left \{ N \in \N \ | \ S_N(f, \alpha,q) \geq \psi( \log N )  \right \} \qquad \left \{ N \in \N \ | \ S_N(f, \alpha,q) \leq - \psi( \log N )  \right \}
\end{equation*}
both have upper density $1$ for any $q \in \mathbb{Q}$. However for the other class of functions, this fails to hold. The proofs of both these statements (Proposition \ref{upp_dens_wo_cond} and Proposition \ref{counterex_density1}) can be found in the Appendix.

\begin{proposition}\label{upp_dens_wo_cond}
    Let 
    \begin{equation}\label{counterexample_form}
    f(x) = \mathds{1}_{ \left [\frac{r_1}{s_1},\frac{r_2}{s_2} \right ]}( \{x \} ) - \mathds{1}_{ \left [\frac{r_2}{s_2},\frac{r_3}{s_3} \right ]}( \{x \} ), \quad r_i,s_i \in \mathbb{Z} \setminus \{0\}, \gcd (r_i,s_i) = 1, \quad r_1^2 + r_3^2 > 0.
    \end{equation}
    Then, for every $q \in \mathbb{Q}$ and almost all $\alpha \in [0,1)$, both sets
\begin{equation}
\left \{ N \in \N \ | \ S_N(f, \alpha,q) \geq \psi( \log N )  \right \}, \qquad \left \{ N \in \N \ | \ S_N(f, \alpha,q) \leq - \psi( \log N )  \right \}
\end{equation}
have upper density $1$. 
\end{proposition}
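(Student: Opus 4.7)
The plan is to reduce the proposition to Theorem \ref{ThmUpperDensDiscFunctionsRefined} by a rational translation argument. First, I would compute the quantities appearing in condition \eqref{aici_condition} for $f$. A short analysis of the jumps at the three discontinuities $r_1/s_1, r_2/s_2, r_3/s_3$ gives $A_i = -1, 2, -1$, so $\sum_i A_i = 0$ in every case. For the generic ordering $0 < r_1/s_1 < r_2/s_2 < r_3/s_3 < 1$, taking $x_0 = 0$, $c_1 = -1$, $c_2 = 1$, $c_3 = 0$, one obtains
\[\sum_{i=1}^{3} c_i(x_i - x_{i-1}) = \frac{r_2}{s_2} - 2\,\frac{r_1}{s_1}.\]
When $r_1 = 0$ (so the first discontinuity coincides with $x_0 = 0$), a direct recomputation yields $r_2/s_2 \neq 0$; when $r_3 = 0$ (so the second interval wraps around and produces a new discontinuity at $0$), the analogous calculation gives $-2\,r_1/s_1$, and the hypothesis $r_1^2 + r_3^2 > 0$ guarantees that this is nonzero.

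Consequently, \eqref{aici_condition} holds for $f$ in every case \emph{except} the degenerate sub-case where $r_1, r_3 \neq 0$ and $r_2/s_2 = 2\,r_1/s_1$ exactly. In all other situations Theorem \ref{ThmUpperDensDiscFunctionsRefined} applies directly to $f$ and gives upper density $1$. For the degenerate sub-case I would translate: consider $f_y(x) := f(x+y)$ with $y \in (0, r_1/s_1) \cap \mathbb{Q}$. The discontinuities of $f_y$ lie at $r_i/s_i - y \in (0,1)$ in the same cyclic order, the jumps are unchanged, and a short calculation yields
\[\sum_{i=1}^{3} c_i(x_i - x_{i-1}) = \left(\frac{r_2}{s_2} - 2\,\frac{r_1}{s_1}\right) + y = y \neq 0,\]
so $f_y$ satisfies \eqref{aici_condition} and remains in the framework of Definition \ref{def_fct1}.

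Applying Theorem \ref{ThmUpperDensDiscFunctionsRefined} to $f_y$ then yields upper density $1$ for both sets $\{N : S_N(f_y, \alpha, q') \geq \psi(\log N)\}$ and $\{N : S_N(f_y, \alpha, q') \leq -\psi(\log N)\}$, for almost every $\alpha \in [0,1)$ and every $q' \in \mathbb{Q}$. Since $S_N(f_y, \alpha, q') = S_N(f, \alpha, q' + y)$ by the definition of $S_N$ together with the translation-invariance of $\int f$, the substitution $q' = q - y \in \mathbb{Q}$ transfers the conclusion back to $f$ at the original $q$. The main obstacle lies in the case analysis of the first paragraph, where one must carefully track which point plays the role of $x_0$ and which serve as the discontinuities $x_i$ when $r_1 = 0$ or $r_3 = 0$, so as to see exactly why the hypothesis $r_1^2 + r_3^2 > 0$ is what makes the step-function part of $f$ non-degenerate; the translation step itself is routine once $y$ is chosen small, rational, and positive.
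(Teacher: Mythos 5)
The crux of your argument is the claim that translating $f$ by a small rational $y$ changes the quantity $\sum_i c_i(x_i - x_{i-1})$, specifically that it becomes $(r_2/s_2 - 2\,r_1/s_1) + y$. This directly contradicts Lemma \ref{LemRepresentationf}, which asserts that when $\sum_i A_i = 0$, the non-vanishing of $\sum_i c_i(x_i - x_{i-1})$ is \emph{invariant} under translation. You cannot both invoke Theorem \ref{ThmUpperDensDiscFunctionsRefined} (whose ``for all $q \in \mathbb{Q}$'' clause is proved precisely by reducing to $q = 0$ via that translation-invariance) and simultaneously rely on the opposite claim to manufacture a translate that satisfies the hypothesis. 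If the invariance holds, your translation never escapes the degenerate case; if it fails, then the reduction to $q=0$ in the proof of Theorem \ref{ThmUpperDensDiscFunctionsRefined} is not justified for $q' \ne 0$, and you cannot apply that theorem at $q' = q - y$ as you do.

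Semantically, the degenerate sub-case is genuinely non-empty and not removable by translation. The translation-invariant quantity hiding behind the decomposition is $\sum_i A_i x_i$ (equal to $-r_1/s_1 + 2\,r_2/s_2 - r_3/s_3$ for your $f$), and when $\sum_i A_i = 0$ this is unchanged by any shift $x_i \mapsto x_i - y$. Taking, say, $x_1 = 1/6$, $x_2 = 1/2$, $x_3 = 5/6$ gives $\sum_i A_i x_i = 0$ and $r_1^2 + r_3^2 > 0$, so Proposition \ref{upp_dens_wo_cond} must cover a case that Theorem \ref{ThmUpperDensDiscFunctionsRefined} cannot reach even after translation. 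This is exactly why the paper does not argue via Theorem \ref{ThmUpperDensDiscFunctionsRefined} at all: its proof instead applies Lemma \ref{prop_telescopic_part} directly to $S_{b_{K-1}q_{K-1}}(f,\alpha)$ and runs a case analysis on the divisibility relations among $s_1, s_2, s_3$, choosing congruence classes for $q_{K-1}$ modulo products of the $s_i$ and using the hypothesis $r_1^2 + r_3^2 > 0$ to guarantee one of the fractional-part expressions in \eqref{counterex_analysis} is bounded below by a positive constant. That congruence-class argument is the essential missing ingredient in your proposal; the translation trick alone cannot replace it.
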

\begin{rem}
Let $f(x)= \mathds{1}_{ \left [\frac{1}{4},\frac{1}{2} \right ]}(\{ x \} ) - \mathds{1}_{\left [\frac{1}{2},\frac{3}{4} \right ]}( \{x \})$ which satisifes the assumptions of Proposition \ref{upp_dens_wo_cond}. One can easily check that $ \sum_{i=1}^{ \nu} A_i = \sum_{i =1}^{\nu} c_i ( x_i - x_{i-1} ) = 0$. Theorem \ref{ThmUpperDensDiscFunctionsRefined} (and hence Theorem \ref{ThmUpperDensDiscFunctions}) only shows that the sets in \eqref{EqSetsUpperDens} have positive upper density. Proposition \ref{upp_dens_wo_cond} shows that we have indeed upper density $1$ meaning that the assumptions in Theorem \ref{ThmUpperDensDiscFunctionsRefined} are not sharp.
\end{rem}

The following proposition shows that there are functions of the form as in Definition \ref{def_fct1} where the sets in \eqref{EqSetsUpperDens} do not have upper density $1$. 
\begin{proposition}\label{counterex_density1}
    Let $u,v,w$ be positive integers with $u < v < w/2$, $\gcd(u,w) = \gcd(v,w) = 1$ and let
    \[f(x) = \mathds{1}_{\left[\frac{u}{w},\frac{v}{w}\right]} ( \{x \})
    - \mathds{1}_{\left[ 1- \frac{v}{w}, 1 -\frac{u}{w}\right]} ( \{x \}) .\]
Then, there exists a monotone increasing function $ \psi : \R_+ \rightarrow \R_+$ with $ \sum_{k=1}^{\infty} \frac{1}{\psi(k)} = \infty$ and a constant $0 < \delta  < 1$ such that, for almost every $\alpha \in [0,1)$, the set
\begin{equation*}
    \left \{ N \in \N : S_N( f, \alpha) \geq \psi( \log N) \right\} 
\end{equation*}
has upper density of at most $ 1- \delta $.
\end{proposition}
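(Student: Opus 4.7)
The central feature of $f$ is its antisymmetry: since $[1-v/w, 1-u/w]$ is the reflection of $[u/w, v/w]$ through $0$ on the torus, one checks directly that $f(1-x) = -f(x)$ almost everywhere. Combined with $\{n(1-\alpha)\} = 1 - \{n\alpha\}$ and $\int_0^1 f = 0$, this yields the Birkhoff identity
\[ S_N(f, 1-\alpha) = -S_N(f, \alpha) \quad \text{for every } N \in \N \text{ and almost every } \alpha \in [0,1). \]
I will take $\psi(k) = k$, which is monotone increasing with $\sum_{k=1}^\infty 1/\psi(k) = \infty$.

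For each $N$, set $A_N^{\pm} := \{\alpha \in [0,1) : \pm S_N(f, \alpha) \geq \psi(\log N)\}$. The antisymmetry together with the invariance of Lebesgue measure under $\alpha \mapsto 1-\alpha$ gives $\lambda(A_N^+) = \lambda(A_N^-)$, and since these two sets are disjoint for $\psi(\log N) > 0$, one has $\lambda(A_N^+) \leq 1/2$. Applying Fubini yields
\[ \int_0^1 \frac{|A^+(\alpha) \cap [1,M]|}{M}\, d\alpha = \frac{1}{M}\sum_{N=1}^M \lambda(A_N^+) \leq \frac{1}{2} \]
for every $M$. Since the integrands are uniformly bounded by $1$, the reverse Fatou lemma gives
\[ \int_0^1 \bar{d}\bigl(A^+(\alpha)\bigr)\, d\alpha \leq \frac{1}{2}. \qquad (\ast) \]
In particular $\bar{d}(A^+(\alpha)) < 1$ on a set of positive measure, already ruling out the upper density $1$ conclusion of Theorem~\ref{ThmUpperDensDiscFunctionsRefined}.

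The main obstacle is to upgrade $(\ast)$ to an almost-sure pointwise bound. My plan is to establish that $\alpha \mapsto \bar{d}(A^+(\alpha))$ is almost surely constant; combined with $(\ast)$, this forces the constant to be at most $1/2$ and yields the proposition with $\delta = 1/2$. I would argue constancy by showing that the event $\{\bar{d}(A^+) > c\}$ is, modulo null sets, a tail event of the continued fraction sequence $(a_k)_{k \geq 1}$, so that Kolmogorov's $0$--$1$ law for the ergodic Gauss shift applies. The technical crux is controlling how finite modifications of the initial partial quotients of $\alpha$ (which induce a rational translate $\alpha \mapsto \alpha + p/q$) affect the Birkhoff sum: splitting $\sum_{n \leq N} f(n(\alpha + p/q))$ into $q$ sub-sums indexed by the residue class of $n$ modulo $q$ (each a Birkhoff sum for the rotation by $q\alpha$) and applying Denjoy--Koksma together with the bounded variation of $f$ should confirm that such translations leave the upper density unchanged. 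This last verification is the most delicate part of the plan.
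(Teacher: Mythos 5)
Your antisymmetry observation $f(1-x) = -f(x)$ and the resulting identity $S_N(f,1-\alpha) = -S_N(f,\alpha)$ are correct, and they isolate the same structural feature that drives the paper's proof. The Fubini bound $\int_0^1 \bar{d}(A^+(\alpha))\,\mathrm{d}\alpha \leq 1/2$ is also correct. However, the remainder of the plan has a genuine gap, and the specific route you outline rests on a false premise.

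The critical step you defer --- upgrading the integrated bound to an almost-sure pointwise bound via a $0$--$1$ law --- is not carried out, and the observation underpinning it is incorrect: a finite modification of the initial partial quotients of $\alpha$ does \emph{not} induce a rational translation $\alpha \mapsto \alpha + p/q$. It induces a M\"obius transformation $\alpha \mapsto (a\alpha+b)/(c\alpha+d)$ with $\bigl(\begin{smallmatrix}a&b\\c&d\end{smallmatrix}\bigr) \in GL_2(\mathbb{Z})$ (the two numbers become M\"obius images of the common tail $T^m\alpha$). Consequently the residue-class decomposition and Denjoy--Koksma comparison you sketch does not apply. Even setting this aside, establishing that $\bar{d}(A^+(\alpha))$ is unchanged under such M\"obius transformations --- i.e.\ that $\{\bar{d}(A^+) > c\}$ really is a Gauss-shift tail event --- would require comparing the Birkhoff sums $S_N(f,\alpha)$ and $S_N(f,\alpha')$ uniformly in $N$ for M\"obius-related $\alpha,\alpha'$, which is a delicate matter (the sums are themselves unbounded and of size comparable to the thresholds involved). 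None of this is established, so the conclusion $\delta = 1/2$ is unsupported.

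The paper's proof avoids all of this by a direct construction: writing $M$ in range $[q_{K-1},q_K)$, letting $K_0 = \argmax_{k\leq K}a_k$, and restricting to the set $A_M^{\delta}$ of $N$ whose Ostrowski digit $b_{K_0-1}(N)$ is at most $\delta a_{K_0}$ (a set of proportion $\geq \delta$ in $[1,M]$). For such $N$, the decomposition $S_N = S_{N'} + S_{b_{K_0-1}q_{K_0-1}}(\cdot,\cdot,N'\alpha) + S_{N''}$ is controlled as follows: the two outer pieces are $O(K\log K)$ by Denjoy--Koksma and the trimmed-sum estimate \eqref{Eq_trimmed_sum}, and, crucially, the middle piece vanishes \emph{exactly} because Lemma~\ref{prop_telescopic_part} evaluates $S_{bq_{K_0-1}}(f,\alpha)$ in closed form and the symmetric placement of the two intervals makes the fractional-part contributions cancel regardless of the residue of $q_{K_0-1}$ modulo $w$. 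Thus $|S_N| \ll \log N \log\log N$ on a set of density $\geq \delta$, and taking $\psi(k) = k\log k \log\log(k+10)$ gives the result. Note that this is the same antisymmetry you identified, but exploited pointwise and constructively rather than only in measure. If you want to pursue your approach, the soft measure-theoretic part is fine, but the almost-sure constancy needs a completely different justification than the one sketched.
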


\begin{proof}[Proof of Theorem \ref{ThmUpperDensDiscFunctions}, Theorem \ref{koks_sharp}, Corollary \ref{special_fct_dens_1} and Corollary \ref{koks_dens1}]
Corollary \ref{special_fct_dens_1} follows immediately from Theorem \ref{ThmUpperDensDiscFunctionsRefined} since all functions considered in Corollary \ref{special_fct_dens_1} satisfy $\;\sum\limits_{i = 1}^{ \nu} A_i \neq  0 $ or $ \sum\limits_{i=1}^{ \nu} c_i ( x_{i} - x_{i-1}) \neq 0$. Analogously, Theorem \ref{koks_sharp} and Corollary \ref{koks_dens1} follow directly from Theorem \ref{ThmKoksRefined}. Theorem \ref{ThmUpperDensDiscFunctionsRefined} implies the statement in Theorem \ref{ThmUpperDensDiscFunctions} for the case $ \sum_{k=1}^{\infty} \frac{1}{ \psi(k) } = \infty$,
so we are left with the case $\sum_{k=1}^{\infty} \frac{1}{ \psi(k) } < \infty $.
By the Denjoy--Koksma inequality \eqref{koksma_bound}, we have
\begin{equation*}
    \lvert S_N(f, \alpha, q) \rvert \leq \Var(f) \left( \sum_{i=1}^{K(N)} a_i \right),
\end{equation*}
where $q_{K(N)-1} \leq N < q_{K(N)}$. We have $ \sum_{i=1}^{K(N)} a_i  = a_{K_0} +  \sum_{i=1, i \neq K_0}^{K(N)} a_i $ with $K_0 = \argmax_{i=1, \ldots, K(N)} a_i$. 
We define $\tilde{\psi} (k) := c_1 \psi( c_2 k)$ for constants $c_1, c_2 > 0$ specified later. Since $\psi$ is monotone, it follows immediately that $ \sum_{k=1}^{\infty} \frac{1}{\tilde{\psi}(k) } < \infty$. Thus, \eqref{Eqbernstein} implies that, for sufficiently large $N$, we have $ a_{K_0} \leq \tilde{\psi}(K_0) $. Moreover, by \eqref{Eq_trimmed_sum} there exists an absolute constant $ \tilde{c} > 0$ such that $ \sum_{i=1, i \neq K_0}^{K(N)} a_i \leq \tilde{c} K(N) \log K(N) $. Together, we get 
\begin{align*}
\lvert S_N(f, \alpha , q) \rvert & \leq \Var(f) \left( \tilde{\psi}(K_0) + \tilde{c} K(N) \log K(N) \right) \\
& \leq \psi( \log N) + c \log N \log \log N,
\end{align*}
where we choose $c_1$ and $c_2$ such that 
\[  \Var (f) \tilde{\psi}(K_0 ) \leq \psi( \log N)\]
and $c > 0 $ in a way that $ \tilde{c}  \Var(f) K(N) \log K(N) \leq c \log N \log \log N$. Thus, there exists a constant $c > 0$ such that, for almost all $\alpha \in [0,1)$, there are only finitely many $N \in \N$ with
\begin{equation*}
\lvert S_N(f, \alpha, q) \rvert \geq \psi( \log N) + c \log N \log \log N.
\end{equation*} 
\end{proof}

\subsection{Heuristic of the proofs}

We will briefly line out the main ideas of the proof of Theorems \ref{ThmUpperDensDiscFunctionsRefined} and \ref{ThmKoksRefined}. One of the core tools we are using is the well-known result in metric number theory that for almost every $\alpha \in [0,1)$, there are infinitely many $K \in \N$ such that $a_K$ dominates the sum of the preceding partial quotients, that is 
$\sum_{i=1}^{K-1} a_i = o(a_K)$. Here, $K$ will always satisfy this property. For $q_{K-1} < N < q_K$ and $N = b_{K-1}q_K + N'$, $N' < q_{K-1}$, we first get rid of $S_{N'}(f,\alpha)$ by an application of the Denjoy--Koksma inequality. The rest of the proof is to show that essentially $S_{b_{K-1}q_{K-1}}(f,\alpha) \gg a_K$ for most $N$, which then implies morally both Theorems \ref{ThmUpperDensDiscFunctionsRefined} and \ref{ThmKoksRefined} by an application of the Borel-Bernstein Theorem. 
As $\{n\alpha\}_{n =1}^{q_{K-1}}$ is close to being uniformly distributed in the unit interval, it is natural to analyze
\[S_{(b+1)q_{K-1}}(f,\alpha) - S_{bq_{K-1}}(f,\alpha)
= \sum_{n=1}^{q_{K-1}} f(n\alpha + bq_{k-1}\alpha) - q_{K-1}\int_{0}^{1}f(x) \,\mathrm{d}x
\] for every $b \leq b_{K-1}-1$. For $f$ of the form considered in Theorems \ref{ThmUpperDensDiscFunctionsRefined} and \ref{ThmKoksRefined}
and $\frac{b}{a_k} \in [c',c]$ for some $0 \leq c' < c < 1$,
one obtains \[(-1)^{K}\left(S_{(b+1)q_{K-1}}(f,\alpha) - S_{bq_{K-1}}(f,\alpha)\right) \gtrapprox  d\] 
with $d > 0$, provided that
$q_{K-1}$ satisfies some congruence relation that depends on the location of the discontinuities
(Lemma \ref{lem_case_posud}). 
 If $\frac{b}{a_k} \in [0,d')$ we get \[(-1)^{K}\left(S_{(b+1)q_{K-1}}(f,\alpha) - S_{bq_{K-1}}(f,\alpha)\right) \gtrapprox  0.\] 
Thus, for $\delta > 0$ and $N$ with $(\delta + c') a_K < b_{K-1}(N) < c a_K$, we have
\[(-1)^KS_N(f,\alpha) \approx S_{b_{K-1}q_{K-1}}(f,\alpha) \gtrapprox \delta d a_K \gg \sum_{i=1}^{K}a_i.\]
Letting $\delta \to 0$, we see that the desired inequality holds on a proportion of at least
$\frac{c-c'}{c}$ many elements among $\{1,\ldots,ca_K\}$.
By a refinement of the Borel-Bernstein Theorem, we ensure that there are infinitely many odd respectively even $K$ that both satisfy $\sum_{i =1}^{K-1} a_i = o(a_K)$ and this certain congruence relation on $q_{K-1}$. Thus, we obtain the positive upper density by considering the subsequence $ca_K$
where the $K$ are chosen out of this infinite set, giving upper density of at least $\frac{c-c'}{c} > 0$. 

Under the assumptions $\,\sum\limits_{i = 1}^{ \nu} A_i \neq  0 $ or $\;\sum\limits_{i=1}^{ \nu} c_i (x_{i} - x_{i-1}) \neq 0$, it is possible to prove 
that we can choose in the above discussion $c' = 0$ (Lemma \ref{lem_case_ud1}), which leads to the result with upper density $1$.

\subsection{Preparatory Lemmas}
Before we come to the proof of Theorems \ref{ThmUpperDensDiscFunctionsRefined} and \ref{ThmKoksRefined}, we need a few auxiliary results. The first lemma treats the sawtooth function, which in view of Lemma \ref{LemRepresentationf} is a building block for the decomposition of $f$.
\begin{lem}
\label{LemAsymptoticsSN}
Let $\alpha \in [0,1)$ be an irrational number and $f(x) = \left( \{ x\} - \frac{1}{2} \right)$.
If $a_K$ is sufficiently large, then, for $0 \leq b \leq a_K$, we have
\begin{align*}
  S_{bq_{K-1}}( f, \alpha ) & =  (-1)^K\frac{b}{2} \left(1 - \frac{b}{a_{K}}\right) + O \left(1\right).
\end{align*}
\end{lem}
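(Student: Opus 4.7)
The plan is to split $S_{bq_{K-1}}(f,\alpha)$ into $b$ blocks of length $q := q_{K-1}$ and evaluate each block in closed form, using the fine structure of the rotation orbit. Writing $n = jq + m$ with $0 \le j \le b-1$ and $1 \le m \le q$, and using that $q_{K-1}\alpha \equiv \eta \pmod 1$ with $\eta := (-1)^{K-1}\delta_{K-1}$, I obtain
\[
 S_{bq_{K-1}}(f,\alpha) = \sum_{j=0}^{b-1} T_j, \qquad T_j := \sum_{m=1}^{q} f(m\alpha + j\eta).
\]
Setting $\theta := \alpha - p_{K-1}/q$ and $\sigma(m) := mp_{K-1} \bmod q$ (a bijection $\{1,\dots,q\} \to \{0,1,\dots,q-1\}$), I have the basic representation $m\alpha \equiv \sigma(m)/q + m\theta \pmod 1$.

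The key arithmetic input is the classical identity $q_K\delta_{K-1} + q_{K-1}\delta_K = 1$, which in particular yields $a_K q\delta_{K-1} < 1$. From this I deduce that for every $1 \le m \le q$ and $0 \le j \le a_K-1$, the quantity $\sigma(m)/q + m\theta + j\eta$ already lies in $[0,1)$, with the sole exception $m=q$ in the case $K$ even, where $\sigma(q)=0$ and a single wraparound produces the explicit value $\{q\alpha + j\eta\} = 1 - (j+1)\delta_{K-1}$. This case analysis — splitting $K$ odd (where $\eta > 0$) from $K$ even (where $\eta<0$) — is the main technical obstacle, but it is purely arithmetic. Once the wraparounds are pinned down, $T_j$ becomes linear in $j$: bijectivity of $\sigma$ gives $\sum_m \sigma(m)/q = (q-1)/2$, and the remaining linear contributions combine so that both cases collapse into the single formula
\[
 T_j = \tfrac{(-1)^K}{2} + \tfrac{(q+1)\eta}{2} + q j\eta.
\]

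Summing over $0 \le j \le b-1$ yields
\[
 S_{bq_{K-1}}(f,\alpha) = \tfrac{(-1)^K b}{2} + \tfrac{b(q+1)\eta}{2} + q\eta\cdot\tfrac{b(b-1)}{2}.
\]
The same identity $q_K\delta_{K-1} + q_{K-1}\delta_K = 1$, combined with the easy bounds $q_{K-2}/q_K \le 1/a_K$ and $q_{K-1}\delta_K < 1/a_K$, gives $q\delta_{K-1} = 1/a_K + O(1/a_K^2)$. Substituting, $q\eta\cdot b(b-1)/2 = -(-1)^K b^2/(2a_K) + O(1)$ uniformly for $b \le a_K$, while the term $b(q+1)\eta/2$ is bounded by $a_K(q+1)\delta_{K-1}/2 = O(1)$. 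Combining these estimates yields the claimed
\[
 S_{bq_{K-1}}(f,\alpha) = (-1)^K\tfrac{b}{2}\!\left(1 - \tfrac{b}{a_K}\right) + O(1).
\]
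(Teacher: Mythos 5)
Your proof is correct and takes essentially the same approach as the paper's: decompose the Birkhoff sum into $b$ blocks of length $q_{K-1}$, exploit the bijection $m \mapsto mp_{K-1} \bmod q_{K-1}$, and note that after reindexing the argument of $\{\cdot\}$ stays in $[0,1)$ so the fractional part can be evaluated directly. The main difference is in the level of detail: the paper only writes out the case $K$ odd (where no wraparound occurs) and absorbs the small shifts into a running $O(1/(q_{K-1}a_K))$, whereas you compute each block $T_j$ exactly in closed form and explicitly track the single wraparound at $m=q$ for $K$ even, so your version covers both parities without appealing to "analogously." The small-quantity estimates ($q\delta_{K-1}=1/a_K + O(1/a_K^2)$, the boundedness of $b(q+1)\eta/2$) are the same in substance as the paper's use of $\delta_{K-1}=1/(q_{K-1}a_K(1+\varepsilon_K))$ with $\varepsilon_K = O(1/a_K)$. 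No gaps.
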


\begin{proof}
We only consider the case where $K$ is odd since the other case can be treated analogously. We thus have 
\begin{equation*}
\frac{1}{q_K + q_{K-1}} \leq \delta_{K-1} = q_{K-1}\alpha - p_{K-1} \leq \frac{1}{q_K }.
\end{equation*}

Since $q_K = a_K q_{K-1} + q_{K-2}$, we get the asymptotic expression $ \delta_{K-1} = \frac{1}{q_{K-1} a_K ( 1 + \varepsilon_K)}$, where $ \varepsilon_K= O( a_K^{-1})$. We obtain
\begin{align*}
S_{b q_{K-1} }( f, \alpha) & = \sum_{n=1}^{b q_{K-1}} \left( \left \{ n \alpha  \right\} - \frac{1}{2}\right) \\
& = \sum_{u=0}^{b-1} \sum_{n=1}^{q_{K-1}} \left( \left \{ n \alpha + u q_{K-1} \alpha   \right\} - \frac{1}{2} \right) \\
& = \sum_{u=0}^{b-1} \sum_{n=1}^{q_{K-1}} \left( \left \{ n \frac{p_{K-1}}{q_{K-1}} + \frac{n}{q_{K-1}} \delta_{K-1} + u \delta_{K-1}   \right\} - \frac{1}{2} \right) \\
& = \sum_{u=0}^{b-1} \sum_{n=1}^{q_{K-1}} \left( \left \{ n \frac{p_{K-1}}{q_{K-1}} + O \left( \frac{1 }{q_{K-1}a_K } \right)  +\frac{u}{q_{K-1}a_K (1 +\varepsilon_K)}   \right\} - \frac{1}{2} \right) \\
& = \sum_{u=0}^{b-1} \sum_{n=0}^{q_{K-1} -1 } \left(   \frac{n}{q_{K-1}} + O \left( \frac{1 }{q_{K-1}a_K } \right)  +\frac{u}{q_{K-1}a_K (1 + \varepsilon_K) }  \right)  - \frac{b q_{K-1} }{2} .
\end{align*}
In the second last line we used that $ \delta_{K-1}= \frac{1}{q_{K-1} a_K ( 1 + \varepsilon_K)}$ and we employed $n \leq q_{K-1}$ in the inner summation. Further, we used that $\gcd(p_{K-1}, q_{K-1} ) =1$ and hence the remainders of $n p_{K-1}$ modulo $q_{K-1}$ are precisely the integers $n=0, \ldots , q_{K-1}-1$. Finally, we omitted the fractional part $ \{ \cdot \} $, since $ \frac{n}{q_{K-1}} + O \left( \frac{1 }{q_{K-1}a_K } \right)  +\frac{u}{q_{K-1}a_K(1+\varepsilon_K)} < 1 $ holds for all $n$ and $u$, provided $a_K$ is sufficiently large. This leads to
\begin{align*}
\sum_{u=0}^{b-1} \sum_{n=0}^{q_{K-1}-1} 
\Bigg( \frac{n}{q_{K-1}}  + O \Bigg( \frac{1}{q_{K-1}a_K}  \Bigg) & +\frac{u}{q_{K-1}a_K(1+\varepsilon_K)} \Bigg) - \frac{b q_{K-1}}{2} \\
&= \frac{q_{K-1}( q_{K-1} -1) b}{2 q_{K-1}} + O(1) + \frac{ b( b -1 ) }{2 a_K (1+\varepsilon_K)} - \frac{b q_{K-1}}{2} \\
&=  - \frac{ b}{2} + \frac{ b^2 }{2 a_K (1+\varepsilon_K)}  + O(1) \\
& = - \frac{b}{2} \left(1 - \frac{b}{a_{K}}\right) + O \left(1\right),
\end{align*}
where we also made use of the estimate $b \leq a_K$.
\end{proof}

The next lemma treats the local discrepancy function, which in view of Lemma \ref{LemRepresentationf} is also a building block for the decomposition of $f$.
\begin{lem}
\label{prop_telescopic_part}
Let $f = \mathds{1}_{\left [ \frac{r_1}{s_1},\frac{r_2}{s_2} \right ]}$ with $r_i,s_i \in \mathbb{N}$ and $\gcd(r_1,s_1) = \gcd(r_2,s_2) = 1$. Further, let $\alpha = [0;a_1,a_2,\ldots]$ be fixed and let both $k \in \N$ and $a_k$ be sufficiently large. Moreover, let $b \in \mathbb{N}$
such that $b  \leq \frac{a_k}{2 s_1 s_2}$. Then, we have

\[ S_{bq_{k-1}}(f,\alpha)= b \left( \left\{\frac{r_1}{s_1}q_{k-1} \right\}
+ (-1)^{k-1}\mathds{1}_{\{s_1 \mid q_{k-1}\}} - \left\{\frac{r_2}{s_2}q_{k-1} \right\}
- (-1)^{k-1}\mathds{1}_{\{s_2 \mid q_{k-1}\}} \right),
\]
where $ \mathbb{1}_{ s_i | q_{k-1} } = 1$ if $s_i | q_{k-1}$ and $ \mathbb{1}_{ s_i | q_{k-1} } = 0$ if $s_i \nmid q_{k-1}$.
\end{lem}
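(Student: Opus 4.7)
The plan is to telescope the sum over $b$ full periods of length $q_{k-1}$ and exploit the lattice-like structure of the orbit $\{n\alpha\}_{n=1}^{q_{k-1}}$. Writing each $m \in \{1,\ldots,bq_{k-1}\}$ uniquely as $m = n + uq_{k-1}$ with $u \in \{0,\ldots,b-1\}$ and $n \in \{1,\ldots,q_{k-1}\}$, I would set
\[
S_{bq_{k-1}}(f,\alpha) \;=\; \sum_{u=0}^{b-1}\Big[T_u - q_{k-1}\bigl(\tfrac{r_2}{s_2} - \tfrac{r_1}{s_1}\bigr)\Big], \qquad T_u := \sum_{n=1}^{q_{k-1}} f\!\bigl(\{n\alpha + uq_{k-1}\alpha\}\bigr),
\]
and aim to show that every $T_u$ equals $q_{k-1}(r_2/s_2 - r_1/s_1)$ plus exactly the parenthesised expression from the lemma, which then gives the stated identity after summing over $u$ and factoring out $b$.

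To compute $T_u$ I would use the convergent identity $q_{k-1}\alpha - p_{k-1} = (-1)^{k-1}\delta_{k-1}$, together with the fact that, since $\gcd(p_{k-1},q_{k-1}) = 1$, the map $n \mapsto j_n := np_{k-1}\bmod q_{k-1}$ is a bijection of $\{1,\ldots,q_{k-1}\}$ onto $\{0,\ldots,q_{k-1}-1\}$. Modulo~$1$,
\[
\{n\alpha + uq_{k-1}\alpha\} \;=\; \Big\{\tfrac{j_n}{q_{k-1}} + (-1)^{k-1}\Big(\tfrac{n}{q_{k-1}} + u\Big)\delta_{k-1}\Big\},
\]
so each orbit point is the lattice point $j_n/q_{k-1}$ shifted by a small signed amount of absolute value at most $(1+u)\delta_{k-1} \le (1+b)\delta_{k-1}$. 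Combined with $\delta_{k-1} \le 1/q_k \le 1/(a_k q_{k-1})$, the hypothesis $b \le a_k/(2s_1s_2)$ keeps the displacement strictly below $1/(s_1s_2 q_{k-1})$ once $a_k$ is sufficiently large.

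Next I would carry out the boundary analysis at each endpoint $r_i/s_i$. If $s_i \nmid q_{k-1}$, then the nearest lattice point lies at distance at least $1/(s_iq_{k-1})$, which for $a_k$ large strictly exceeds the displacement bound; no orbit point crosses such a boundary, and counting reduces to counting lattice points in $[r_1/s_1, r_2/s_2]$ via $\lfloor r_2q_{k-1}/s_2\rfloor - \lceil r_1q_{k-1}/s_1\rceil + 1$, which after expressing $\lfloor\cdot\rfloor$ and $\lceil\cdot\rceil$ through fractional parts produces exactly the $\{r_iq_{k-1}/s_i\}$ contributions in the formula. If $s_i \mid q_{k-1}$, then $r_i/s_i$ is itself a lattice point, and the sign of $(-1)^{k-1}$ decides whether the orbit image at that point remains in the closed interval or crosses out: for positive sign the left-endpoint orbit is pushed into the interior while the right-endpoint orbit is pushed out, and for negative sign the roles are reversed. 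These gains and losses are the source of the terms $(-1)^{k-1}\mathds{1}_{s_i\mid q_{k-1}}$. Crucially, none of these conclusions depend on $u \in \{0,\ldots,b-1\}$, so all $T_u$ coincide, and summing over $u$ yields the factor $b$ in the final identity.

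The main obstacle is exactly this boundary bookkeeping. One must track the signed displacement $(n/q_{k-1}+u)(-1)^{k-1}\delta_{k-1}$ uniformly across all $n$ and $u$, verify that for every admissible $n$ it stays strictly inside the $\Omega(1/(s_iq_{k-1}))$ gap at a non-lattice endpoint, and treat the exceptional index $n=q_{k-1}$ whose orbit point lies at $(-1)^{k-1}\delta_{k-1}\bmod 1$ and therefore sits near $0$ or near $1$ depending on the parity of $k$, so it can a priori interact with the wrap-around at the endpoints of the fundamental domain. Taking both $k$ and $a_k$ sufficiently large absorbs all the resulting error terms and delivers the clean closed-form expression claimed in the lemma.
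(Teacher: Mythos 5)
Your proposal is correct and follows essentially the same route as the paper: decompose into $b$ blocks of length $q_{k-1}$, use the bijection $n\mapsto np_{k-1}\bmod q_{k-1}$ to turn the orbit into lattice points $j/q_{k-1}$ plus a signed displacement controlled by $\delta_{k-1}\le 1/(a_k q_{k-1})$, and then count lattice points in $[r_1/s_1,r_2/s_2]$ with the boundary bookkeeping at endpoints governed by whether $s_i\mid q_{k-1}$ and by the sign $(-1)^{k-1}$. The paper simply carries out the count explicitly in one representative case ($k$ odd, $s_1\mid q_{k-1}$, $s_2\nmid q_{k-1}$) and declares the others analogous, which is exactly the uniform analysis you describe.
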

\begin{proof}
We show the statement only in the case where $k$ is odd and $s_1 | q_{k-1}$ and $s_2 \nmid q_{k-1}$. The other cases can be treated analogously. Thus, we need to show that
\begin{equation*}
 S_{bq_{k-1}}(f,\alpha)= b \left( 1 - \left \{ \frac{r_2}{s_2}q_{k-1} \right \} \right).
\end{equation*}
We recall that $S_{b q_{k-1}}(f, \alpha) = \left | \left \{ 1 \leq n \leq b q_{k-1} : \{ n \alpha  \} \in \left [ 
 \frac{r_1}{s_1}, \frac{r_2}{s_2} \right ] \right \} \right | - b q_{k-1} \left(  \frac{r_2}{s_2} -  \frac{r_1}{s_1} \right) $. In the following, we use that for odd $k$, we have $ 0 \leq \delta_{k-1} = {q_{k-1}}\alpha - p_{k-1}  = \frac{1}{a_{k}q_{k-1} (1+o_{a_k}(1))}$. We obtain
 \begin{align*}
\left | \left \{ 1 \leq n \leq b q_{k-1} : \{ n \alpha  \} \in \left [ 
 \frac{r_1}{s_1}, \frac{r_2}{s_2} \right ] \right \} \right |
 & = \sum_{j=0}^{b-1} \left |  \left \{ 1 \leq n \leq q_{k-1} : \{ n \alpha + j q_{k-1} \alpha \} \in \left [ 
 \frac{r_1}{s_1}, \frac{r_2}{s_2} \right ] \right \} \right |  \\
 & = \sum_{j=0}^{b-1} \left | \left \{ 1 \leq n \leq q_{k-1} : \left \{ n \frac{p_{k-1}}{q_{k-1}} + n \frac{\delta_{k-1}}{q_{k-1}} +  j  \delta_{k-1}  \right \} \in \left [ 
 \frac{r_1}{s_1}, \frac{r_2}{s_2} \right ] \right \} \right | \\
& = \sum_{j=0}^{b-1} \left | \left \{ 0 \leq n \leq q_{k-1} - 1 : \left \{  \frac{n}{q_{k-1}} + m(n) \frac{\delta_{k-1}}{q_{k-1}} +  j \delta_{k-1}  \right \} \in \left [ 
 \frac{r_1}{s_1}, \frac{r_2}{s_2} \right ] \right \} \right | ,
 \end{align*}
 where $  m(n) := p_{k-1} n \mod{q_{k-1}}$ and we used that $p_{k-1}$ and $q_{k-1}$ are coprime in the last line. Let $ \varepsilon_n := m(n) \frac{\delta_{k-1}}{q_{k-1}} +  j \delta_{k-1} $. Now we have to count the number of $0 \leq n \leq q_{k-1} -1$ such that $ \frac{n}{q_{k-1}} + \varepsilon_n \in \left [ \frac{r_1}{s_1}, \frac{r_2}{s_2} \right ] $. Since $k$ is odd and we assume that $ b \leq \frac{a_k}{2 s_1 s_2}$, we have $ 0 \leq \varepsilon_n \leq \frac{1}{a_{k} q_{k-1} (1 + o_{a_k}(1))} +  \frac{1}{2 s_1 s_2 q_{k-1} (1 + o_{a_k}(1)) } $. Using $s_1 \mid q_{k-1}$ and $ \varepsilon_n \geq 0$, we see that the smallest integer $n$ with $0 \leq n \leq q_{k-1}-1$ such that $ \frac{n}{q_{k-1}} + \varepsilon_n \geq \frac{r_1}{s_1} $ is $n = \frac{r_1}{s_1}q_{k-1}$. Since $s_2 \nmid q_{k-1}$ and $ \varepsilon_n \leq \frac{1}{a_{k} q_{k-1} (1 + o_{a_k}(1))} +  \frac{1}{2 s_1 s_2q_{k-1} (1 + o_k(1)) } < \frac{1}{s_2 q_{k-1}}$ (since $a_k \geq  2 s_1 s_2$, if $a_k$ is sufficiently large, where also $1 +o_{a_k}(1)$ is close to $1$), we have that the largest integer $n$ with $0 \leq n \leq q_{k-1} -1$ such that $ \frac{n}{q_{k-1}} + \varepsilon_n \leq \frac{s_2}{r_2}$ is $n = \left \lfloor \frac{r_2}{s_2} q_{k-1} \right \rfloor$. This means, the number of integers $0 \leq n \leq q_{k-1} -1$ such that  $  \frac{n}{q_{k-1}} + \varepsilon_n   \in \left [ 
 \frac{r_1}{s_1}, \frac{r_2}{s_2} \right ] $ is equal to $ \left \lfloor \frac{r_2}{s_2} q_{k-1} \right \rfloor - \frac{r_1}{s_1} q_{k-1} +1 = 1 - \left \{ \frac{s_2}{r_2} q_{k-1} \right \} + q_{k-1} \left ( \frac{r_2}{s_2} - \frac{r_1}{s_1}\right)$. This leads to
 \begin{align*}
\sum_{j=0}^{b-1} \left | \left \{ 1 \leq n \leq q_{k-1}: \left \{ n \frac{p_{k-1}}{q_{k-1}} + n\frac{\delta_{k-1}}{q_{k-1}} +  j \delta_{k-1} \right \} \in \left [ 
 \frac{r_1}{s_1}, \frac{r_2}{s_2}  \right ] \right \} \right |  = b \left( 1 - \left \{ \frac{s_2}{r_2} q_{k-1} \right \} + q_{k-1} \left ( \frac{r_2}{s_2} - \frac{r_1}{s_1} \right) \right).
 \end{align*}
 This implies that $ S_{b q_{k-1} } ( f, \alpha) = b \left( 1 -  \left \{ \frac{s_2}{r_2} q_{k-1} \right \} \right)$, as claimed.
\end{proof}

\begin{lem}
\label{lem_case_ud1}
Let $f$ be as in Definition \ref{def_fct1}, and let $A_i,c_i,g$ as in Lemma \ref{LemRepresentationf}.
Further, assume that $ \sum_{i = 1}^{ \nu} A_i \neq  0 $ or $ \sum_{i=1}^{ \nu} c_i ( x_{i} - x_{i-1}) \neq 0$.
Then, there exist constants $c, c' > 0$, and integers $\alpha_j,\beta_j,\gamma_j,\delta_j, j = 1,2$ (depending on $f$) such that the following holds:
\begin{itemize}
\item If $K \equiv \alpha_1 \pmod{\beta_1}, \quad q_{K-1} \equiv \gamma_1 \pmod{\delta_1}$, then for any integer $b$ with $0 \leq b \leq ca_{K}$, we have for sufficiently large $K$ and $a_K$,
\begin{equation}
\label{EqPosEstimate}
S_{bq_{K-1}}(g,\alpha) \geq b c' + O \left( \sum_{i=1}^{K-1} a_i \right).
\end{equation}
\item If $K \equiv \alpha_2 \pmod{\beta_2}, \quad q_{K-1} \equiv \gamma_2 \pmod{\delta_2}$, then for any integer $b$ with $0 \leq b \leq c a_{K}$, we have have for sufficiently large $K$ and $a_K$,
\begin{equation}
\label{EqNegEstimate}
S_{bq_{K-1}}(g,\alpha) \leq -bc' + O \left( \sum_{i=1}^{K-1} a_i \right).
\end{equation}
\end{itemize}
\end{lem}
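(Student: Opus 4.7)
The plan is to compute $S_{bq_{K-1}}(g,\alpha)$ explicitly as a function of $b$, of $K \bmod 2$, and of the residue of $q_{K-1}$ modulo $s := \mathrm{lcm}(s_1,\ldots,s_\nu)$ (writing the discontinuities in lowest terms as $x_i = r_i/s_i$), and then to exhibit concrete congruence classes realizing each of the two required bounds. Using the decomposition of $g$ in Lemma \ref{LemRepresentationf} into a sawtooth piece $(\{x\}-\tfrac12)\sum_i A_i$ plus centered indicators $\sum_i c_i(\mathds{1}_{[x_{i-1},x_i)}-(x_i-x_{i-1}))$, I would apply Lemma \ref{LemAsymptoticsSN} to the sawtooth and Lemma \ref{prop_telescopic_part} to each indicator. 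Fixing $c = c(f) > 0$ small enough that $b \leq c a_K$ satisfies the hypothesis $b \leq a_K/(2 s_{i-1} s_i)$ of Lemma \ref{prop_telescopic_part} for every $i$, these combine to give
$$S_{bq_{K-1}}(g,\alpha) = b\Bigl[(-1)^K \frac{1 - b/a_K}{2}\sum_{i=1}^{\nu} A_i + L\Bigr] + O(1),$$
where $L = L(K \bmod 2,\, q_{K-1}\bmod s)$ is an explicit linear combination of the $c_i$ determined by the values of $\{r_i q_{K-1}/s_i\}$ and $\mathds{1}_{s_i \mid q_{K-1}}$. The $O(1)$ error is absorbed by the looser $O(\sum_{i\leq K-1}a_i)$ allowance of the lemma.

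If $\sum_{i=1}^{\nu} A_i \neq 0$, I would choose $q_{K-1} \equiv 0 \pmod{s}$, so that $s_i \mid q_{K-1}$ for every $i$. In Lemma \ref{prop_telescopic_part}, every fractional part then vanishes and every indicator equals $(-1)^{K-1}$; consecutive contributions cancel telescopically and $L \equiv 0$. The bracketed slope reduces to the sawtooth term, which for $b \leq a_K/2$ is bounded in absolute value below by $\tfrac14|\sum_i A_i|$ with sign $(-1)^K \operatorname{sign}(\sum_i A_i)$. Choosing $\alpha_1,\alpha_2 \in \{0,1\}$ to realize each sign, together with $\beta_j = 2$, $\gamma_j = 0$, $\delta_j = s$, and $c' = |\sum_i A_i|/4$, then yields both \eqref{EqPosEstimate} and \eqref{EqNegEstimate}.

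If instead $\sum_{i=1}^{\nu} A_i = 0$ while $S := \sum_{i=1}^{\nu} c_i(x_i - x_{i-1}) \neq 0$, the sawtooth contribution vanishes and the bracketed quantity reduces to $L$. I would evaluate $L$ on the four congruence classes indexed by $K \bmod 2$ and $q_{K-1} \equiv \pm 1 \pmod{s}$. Using $\{-r_i/s_i\} = 1 - r_i/s_i$ for $s_i > 1$ and tracking the boundary term arising from $x_0 = 0$ (whose denominator $s_0 = 1$ always divides $q_{K-1}$), a direct computation produces the four values $\{S,\; c_1 - S,\; -c_1 - S,\; S - 2c_1\}$ (whose sum $-2c_1$ provides a sanity check). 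A short case analysis on the signs of $S$ and $c_1$ shows that these four values cannot all be of the same sign, so one class gives $L > 0$ and another gives $L < 0$, each of absolute value at least a positive constant depending only on $f$; taking $c'$ to be half this minimum yields the two required congruence tuples. The main technical obstacle here is precisely this sign analysis: the boundary point $x_0 = 0$ breaks the clean telescoping available in the first case, and it is only after evaluating $L$ at all four residue-parity pairs that one can exclude a degenerate configuration in which $L$ is of a single sign across the admissible classes.
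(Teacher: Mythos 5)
Your proposal is correct, and in the first case ($\sum_i A_i \neq 0$: kill the indicator contributions by forcing $s_i \mid q_{K-1}$ for all $i$, then use the sawtooth asymptotics and pick the parity of $K$ to set the sign) it is essentially identical to the paper's argument; the only cosmetic difference is that you use $\mathrm{lcm}(s_1,\dots,s_\nu)$ where the paper uses the product $s_1\cdots s_\nu$.

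In the second case ($\sum_i A_i = 0$, $S := \sum_i c_i(x_i - x_{i-1}) \neq 0$) you take a genuinely different route. The paper commits immediately to the single congruence class $q_{K-1} \equiv -1 \pmod{s_1\cdots s_\nu}$, notes that then $\{r_i q_{K-1}/s_i\} = 1 - x_i$ for each $i$, and observes that after the telescoping the bracketed slope collapses exactly to $S$; it then dispatches the opposite-sign inequality with ``analogously''. You instead tabulate the slope $L$ over all four pairs $(K \bmod 2, q_{K-1} \equiv \pm 1 \bmod s)$ — obtaining $\{S,\ c_1 - S,\ -c_1 - S,\ S - 2c_1\}$, which I checked agrees with a direct computation — and close with the observation that $L(\text{odd},-1) = S$ while $L(\text{odd},+1) + L(\text{even},+1) = -2S$, so one of the latter two has sign opposite to $S$. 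That sign argument is correct and, since you never need $S$ to be of a particular sign, it handles the positive and negative inequalities symmetrically and with the same constant $c'$; in that sense your version actually fills in the paper's ``analogously'' more explicitly than the paper itself does. The price is a somewhat longer computation (four classes rather than one), and you implicitly assume $x_1 > 0$ so that every $s_i > 1$ when writing $\{-r_i/s_i\} = 1 - r_i/s_i$; the paper makes this assumption explicitly ``without loss of generality'' (if $x_1 = 0$, the $[x_0,x_1)$ indicator is empty and the $c_1$ term drops out, so either a rational translation or a trivial modification covers it). It would be worth stating that reduction in your write-up, but it is not a gap.
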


\begin{proof}
We only prove \eqref{EqPosEstimate}, the inequality \eqref{EqNegEstimate} can be shown analogously.
First, assume that $ \sum_{i=1}^{\nu} A_i > 0$. Let $x_i = \frac{r_i}{s_i}$ with $\gcd(r_i, s_i)=1$. We set $c :=  \frac{1}{4  s_1 \cdots s_{\nu} }$ and choose $\alpha_1 := 1, \beta_1 :=2, \gamma_1:= 0$ and $ \delta_1 := s_1\cdots s_{\nu}$. Assume that $K \in \N$ satisfies $ K \equiv \alpha_1 \Mod{\beta_1}$ and $q_{K-1} \equiv \gamma_1 \Mod{\delta_1}$.
\par{}
By Lemma \ref{LemRepresentationf}, we can write $g(x) = \left( \sum_{i=1}^{\nu} A_i \right) \left( \left \{ x \right \} - \frac{1}{2} \right) + \sum_{i=1}^{\nu} c_i \left( \mathbb{1}_{[x_{i-1}, x_i]}(\{x\} ) - (x_{i} - x_{i-1} ) \right)$, where we set $x_0 := 0$. This leads to 
\begin{equation*}
S_{bq_{K-1}}(g, \alpha) = \left( \sum_{i=1}^{\nu} A_i \right) \sum_{n=1}^{b q_{K-1}} \left( \{ n \alpha \} - \frac{1}{2} \right) + \sum_{n=1}^{b q_{K-1}} \sum_{i=1}^{\nu} c_i \left( \mathbb{1}_{[x_{i-1}, x_i]}( \{n \alpha \} ) - (x_{i} - x_{i-1} ) \right).
\end{equation*}
By Lemma \ref{prop_telescopic_part}, the second sum above is equal to $0$. Since $K-1$ is even, we get by Lemma \ref{LemAsymptoticsSN}
\begin{align*}
\left( \sum_{i=1}^{\nu} A_i \right) \sum_{n=1}^{b q_{K-1}} \left( \{ n \alpha \} - \frac{1}{2} \right) & \geq  \left( \sum_{i=1}^{\nu} A_i \right) \frac{b}{2} \left( 1 - \frac{b}{a_K}  \right)  + O \left( 1 \right) \\
& \geq \left( \sum_{i=1}^{\nu} A_i \right) \frac{b}{2}  \left( 1 -  c\right) + O \left( 1 \right).
\end{align*}
We define $c':= \frac{1}{2} \left( \sum_{i=1}^{\nu} A_i \right)( 1 -c) $ which is a positive constant, since $c < 1$. This finishes the proof in case of $ \sum_{i=1}^{\nu} A_i > 0$. The case $ \sum_{i=1}^{\nu} A_i < 0 $ can be handled analogously.
\par{}
Now assume $ \sum_{i=1}^{\nu} A_i = 0$, but $ \sum_{i=1}^{\nu} c_i ( x_i - x_{i-1}) \neq 0$. This implies $ \nu \geq 2$ and we assume without loss of generality $x_1 > 0$. Further let $ \sum_{i=1}^{\nu} c_i ( x_i - x_{i-1}) > 0$, the case where $  \sum_{i=1}^{\nu} c_i ( x_i - x_{i-1}) < 0$ can be treated analogously. Let $x_i = \frac{r_i}{s_i}$ with $\gcd(r_i,s_i)=1$ for $i=0, \ldots, \nu$. We choose $ \alpha_1 = 0$, $ \beta_1 = 2$, $ \gamma_1 = s_1 \cdots s_{\nu}-1$ and $ \delta_1 = s_1 \cdots s_{\nu}$ (we note that $s_1 \cdots s_{\nu}  \geq  2$). Let $K \in \N$ such that $ K \equiv \alpha_1 \Mod{\beta_1}$, $q_{K-1} \equiv \gamma_1 \Mod{\delta_1}$ and define $c:= \frac{1}{4 s_1 \cdots s_{\nu}}$. Then, for $0 \leq b \leq c a_K$, we get
\begin{align*}
S_{b q_{K-1}} & = \sum_{n=1}^{bq_{K-1}} \sum_{i=1}^{\nu} c_i \left( \mathbb{1}_{[x_{i-1}, x_i]}( \{n \alpha \} )- (x_i - x_{i-1}) \right) \\ 
& = b \sum_{i=2}^{\nu} c_i   \left(\left \{ \frac{r_{i-1}}{s_{i-1}} q_{K-1} \right\} -  \left \{ \frac{r_i}{s_i} q_{K-1} \right\}  \right) + b c_1 \left( 1 - \left \{ \frac{r_1}{s_1} q_{K-1} \right \} \right).
\end{align*}
In the last line, we applied Lemma \ref{prop_telescopic_part} and used that $ s_i \nmid q_{K-1}$ for every $i=1, \ldots , \nu $. By the choice of $ \gamma_1$ and $\delta_1$, we have $q_{K-1} \equiv -1 \Mod{s_i}$ for all $i=1, \ldots, \nu$ and therefore 
\begin{equation*}
b \sum_{i=2}^{\nu} c_i   \left(\left \{ \frac{r_{i-1}}{s_{i-1}} q_{K-1} \right\} -  \left \{ \frac{r_i}{s_i} q_{K-1} \right\}  \right) + b c_1 \left( 1 - \left \{ \frac{r_1}{s_1} q_{K-1} \right \} \right) = 
b \sum_{i=1}^{\nu} c_i   \left( \frac{r_i}{s_i}  -  \frac{r_{i-1}}{s_{i-1}}  \right).
\end{equation*}
By now defining $c' =  \sum_{i=1}^{\nu} c_i   \left( \frac{r_i}{s_i}  -  \frac{r_{i-1}}{s_{i-1}}  \right) =  \sum_{i=1}^{\nu} c_i  (x_i - x_{i-1}) > 0$, the proof is finished.
\end{proof}

Next, we consider the analogue of the previous lemma in the case $ \sum_{i = 1}^{ \nu} A_i = \sum_{i=1}^{ \nu} c_i ( x_{i} - x_{i-1}) =0$, where we aim for a positive upper density.

\begin{lem}
\label{lem_case_posud}
Let $f$ be as in Definition \ref{def_fct1}, and let $A_i,c_i, g$ as in Lemma \ref{LemRepresentationf}. Further, assume that $ \sum_{i = 1}^{ \nu} A_i = \sum_{i=1}^{ \nu} c_i ( x_{i} - x_{i-1}) = 0$.
Then, there exist constants $c,c', d > 0$ with $ c' < c $ and integers $\alpha_j,\beta_j,\gamma_j,\delta_j, j = 1,2$ (all depending on $f$) such that the following holds:
\begin{itemize}
\item
If $K \equiv \alpha_1 \pmod{\beta_1}, \quad q_{K-1} \equiv \gamma_1 \pmod{\delta_1}$, then for $ c' a_{K} \leq b \leq  c a_{K}$, we have for sufficiently large $K$ and $a_K$ 
\begin{equation}
\label{Eq_lowerbound_Lem_posud}
S_{bq_{K-1}}(g,\alpha) \geq d a_K .
\end{equation}
\item
If $K \equiv \alpha_2 \pmod{\beta_2}, \quad q_{K-1} \equiv \gamma_2 \pmod{\delta_2}$, then for $ c'a_{K} \leq b \leq c a_{K}$, we have for sufficiently large $K$ and $a_K$ 
\begin{equation}
\label{Eq_upperbound_Lem_posud}
S_{bq_{K-1}}(g,\alpha) \leq -d a_K.
\end{equation}
\end{itemize}
\end{lem}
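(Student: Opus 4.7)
The strategy parallels the proof of Lemma \ref{lem_case_ud1}: extract an exact linear-in-$b$ formula for $S_{bq_{K-1}}(g,\alpha)$ and show that its coefficient is nonzero on some congruence class of $(K,q_{K-1})$. Since $\sum_i A_i = 0$, the sawtooth summand in Lemma \ref{LemRepresentationf} vanishes and $g$ reduces to the pure step function $g(x) = \sum_{i=1}^\nu c_i(\mathds{1}_{[x_{i-1},x_i)}(x) - (x_i - x_{i-1}))$. Writing $x_i = r_i/s_i$ in lowest terms, setting $M := s_1 \cdots s_\nu$ and $c := 1/(2M)$, and invoking Lemma \ref{prop_telescopic_part} on each indicator summand, one obtains, for every $0 \leq b \leq c\,a_K$ (with $K, a_K$ sufficiently large), the identity
\[
S_{bq_{K-1}}(g,\alpha) \;=\; b \cdot D(K \bmod 2,\, q_{K-1} \bmod M),
\]
where Abel summation together with $c_\nu = \sum_i A_i = 0$ gives
\[
D(K,t) \;=\; \sum_{j=1}^{\nu-1} A_{j+1}\bigl(\{x_j t\} + (-1)^{K-1} \mathds{1}_{s_j \mid t}\bigr) \;+\; (-1)^{K-1} c_1.
\]

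The core of the proof is to exhibit residues $(\alpha_1,\gamma_1)$ with $D(\alpha_1,\gamma_1) > 0$ and $(\alpha_2,\gamma_2)$ with $D(\alpha_2,\gamma_2) < 0$. The two standing degeneracies rule out exactly the natural choices used in Lemma \ref{lem_case_ud1}: the case $t \equiv 0 \pmod{M}$ produces $D = \pm\sum_i A_i = 0$, and the case $t \equiv -1 \pmod{s_i}$ for all $i$ produces $D = \pm\sum_i c_i(x_i - x_{i-1}) = 0$. These are, however, only two residue classes in $\{0,1\}\times\mathbb{Z}/M\mathbb{Z}$. If $D$ were identically zero on this set, subtracting the two parities of $K$ would yield $\sum_{j=1}^{\nu-1} A_{j+1}\mathds{1}_{s_j \mid t} + c_1 \equiv 0$ in $t$, and summing them would yield $\sum_{j=1}^{\nu-1} A_{j+1}\{x_j t\} \equiv 0$ in $t$. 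A finite-dimensional evaluation argument (for instance, plugging in $t = 1$ kills every divisibility indicator and forces $c_1 = 0$, after which evaluating on enough further residues pins down each $A_{j+1}$ using that the $x_j$ are pairwise distinct rationals) would then force every $A_i$ to vanish, contradicting the existence of a genuine jump guaranteed by the third bullet of Definition \ref{def_fct1}. Hence $D$ takes both strictly positive and strictly negative values; pick corresponding representatives and set $\beta_j := 2$, $\delta_j := M$ for $j = 1, 2$.

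Setting $c' := c/2$ and $d := c' \min\bigl(D(\alpha_1,\gamma_1),\, -D(\alpha_2,\gamma_2)\bigr) > 0$, the proof concludes immediately: for $K \equiv \alpha_1 \pmod{\beta_1}$, $q_{K-1} \equiv \gamma_1 \pmod{\delta_1}$, $K$ and $a_K$ large, and $c' a_K \leq b \leq c\, a_K$, the displayed identity gives $S_{bq_{K-1}}(g,\alpha) = b\,D(\alpha_1,\gamma_1) \geq c' a_K\, D(\alpha_1,\gamma_1) \geq d\, a_K$, proving \eqref{Eq_lowerbound_Lem_posud}; the bound \eqref{Eq_upperbound_Lem_posud} follows symmetrically, and the $O(1)$ discrepancy between $S_N(f)$ and $S_N(g)$ from Lemma \ref{LemRepresentationf} is absorbed into the constants for $K,a_K$ large enough. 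The main obstacle will be the non-vanishing argument in the middle step: one has to extract just enough information from the two algebraic degeneracies $\sum A_i = \sum A_i x_{i-1} = 0$ to confirm that they do not, by some further accidental cancellation, force $D$ to vanish on every single residue class.
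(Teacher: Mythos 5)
There is a genuine gap at the step where you claim $D$ must be nonzero on some residue class: the quantity $D$ can be \emph{identically} zero. The cleanest witness is the function $f(x) = \mathds{1}_{[u/w, v/w]}(\{x\}) - \mathds{1}_{[1-v/w, 1-u/w]}(\{x\})$ from Proposition \ref{counterex_density1}, which satisfies the hypotheses of this lemma. For any $b$ in the range covered by Lemma \ref{prop_telescopic_part}, the computation in the appendix gives
\[
S_{bq_{K-1}}(f,\alpha) \;=\; b\Bigl(\bigl\{\tfrac{q_{K-1}u}{w}\bigr\} + \bigl\{\tfrac{q_{K-1}(w-u)}{w}\bigr\} - \bigl\{\tfrac{q_{K-1}v}{w}\bigr\} - \bigl\{\tfrac{q_{K-1}(w-v)}{w}\bigr\}\Bigr),
\]
and the bracket vanishes for \emph{every} residue $q_{K-1} \bmod w$ and both parities of $K$, since $\{pt/w\} + \{(w-p)t/w\} = \mathds{1}_{w\nmid t}$ whenever $\gcd(p,w)=1$. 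Your ``finite-dimensional evaluation'' step therefore cannot force all $A_i$ to vanish: the identity $\{a\} + \{-a\} = \mathds{1}_{a\notin\mathbb{Z}}$ introduces a hidden linear relation among the functions $t\mapsto\{x_j t\}$ that your argument overlooks. (A side issue that compounds this: your displayed formula for $D$ stems from the printed definition $c_i = \sum_{j\le i}A_j$ in Lemma \ref{LemRepresentationf}, but the identity $\sum A_i\mathds{1}_{[0,x_i)} = \sum c_i\mathds{1}_{[x_{i-1},x_i)}$ used in its proof forces $c_i = \sum_{j\ge i}A_j$; with the correct convention the Abel summation gives $D(K,t) = -\sum_{i=1}^{\nu}A_i\bigl(\{x_i t\}+(-1)^{K-1}\mathds{1}_{s_i\mid t}\bigr)$, which vanishes identically in the example. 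Your version of $D$ is simply not equal to $S_{bq_{K-1}}(g,\alpha)/b$.)

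A self-contained sanity check confirms the gap. If your argument went through, you could take $c'=0$, and the density count in the proof of Theorem \ref{ThmUpperDensDiscFunctionsRefined} would then yield upper density $1$ even under $\sum A_i = \sum c_i(x_i-x_{i-1})=0$ --- contradicting Proposition \ref{counterex_density1}. The constraint $c'>0$ is essential, so no argument relying on the exact linear formula valid for $b\le a_K/(2M)$ can prove this lemma.

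The paper's proof works precisely because it leaves that linear regime. It sets $c'=(x_1+x_2)/2$, $c=x_2$, so $b$ is comparable to $a_K$ but far beyond $a_K/(2M)$, and it tracks what happens as the shift $bq_{K-1}\delta_{K-1}$ carries the orbit $\{n\alpha + bq_{K-1}\alpha\}_{n\le q_{K-1}}$ past the discontinuity at $x_1$: for $u\in[x_1 a_K(1+\varepsilon),\,b]$ the shifted block collects one extra hit in $[x_1,x_2]$, producing a surplus $c_2\bigl(b-\lceil x_1 a_K(1+\varepsilon)\rceil\bigr)\gg a_K$. That overflow is the real content of the lemma, and it is invisible in the range where Lemma \ref{prop_telescopic_part} gives an exact --- and here zero --- answer.
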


\begin{proof}
We only show \eqref{Eq_lowerbound_Lem_posud}, since \eqref{Eq_upperbound_Lem_posud} can be shown analogously. By assumption $ c_1 = \sum_{i=1}^{\nu} A_i = 0$, thus there exist at least two $A_i \neq 0$. To keep notation simple we assume $A_1 \neq  0$ and thus $ c_2 = \sum_{i=2}^{\nu}A_i \neq 0 $. We assume $ c_2 > 0$ and note that the case where $c_2 < 0$ can be handled similarly.
\par{}
Let $x_i = \frac{r_i}{s_i}$ for all $i=0, \ldots , \nu $. We choose $ \alpha_1 = 1$, $ \beta_1 = 2$, $ \gamma_1 = 1$ and $ \delta_1 =  s_1 \cdots s_{\nu}$. We take $K \in \N$ with $K \equiv \alpha_1 \Mod{\beta_1}$, $ q_{K-1} \equiv \gamma_1 \Mod{\delta_1}$ and $a_K > \frac{8}{(x_2 - x_1) c_2} $. Moreover, let $c' = \frac{x_1 + x_2}{2}$ and $c= x_2$, which implies $0 < c' < c$. Let $b \in \N$ with $c' a_K < b < c a_K$ and consider 
\begin{align*}
S_{b q_{K-1}}( g, \alpha ) & = \sum_{n=1}^{b q_{K-1}} \sum_{i=1}^{\nu} c_j \left ( \mathbb{1}_{[x_{i-1}, x_i]}( \{ n \alpha \} ) - (x_i -x_{i-1}) \right) \\
& = \sum_{i=1}^{\nu}   c_i \left ( \left | \left \{ 1 \leq n \leq b q_{K-1} : \{ n \alpha \} \in [x_{i-1}, x_i] \right\} \right | - b q_{K-1} (x_i -x_{i-1}) \right).
\end{align*}
Now, for every $1 \leq i \leq \nu$, we provide a suitable estimate for $ \left |  \left \{ 1 \leq n \leq b q_{K-1} : \{ n \alpha \} \in [x_{i-1}, x_i] \right \} \right | -  b q_{K-1} (x_i -x_{i-1}) $ from below. Starting with the case $i \geq 3 $, we observe that
\begin{align*}
\left |  \left \{ 1 \leq n \leq b q_{K -1 } : \{ n \alpha \} \in [x_{i-1},x_{i}] \right \} \right |  = \sum_{u=0}^{b -1 }  \left | \left \{ 1 \leq n \leq q_{K -1 } : \{ n \alpha + u q_{K -1 } \alpha \} \in [x_{i-1},x_{i}] \right \} \right |.
\end{align*}

For $0 \leq u \leq b -1 $, we have $ \left | \left \{ 1 \leq n \leq q_{K-1} : \{ n \alpha + u q_{K-1} \alpha \} \in [x_{i-1},x_{i}] \right \} \right | = \left | \left \{ 1 \leq n \leq q_{K-1} : \left \{ n \frac{p_{K-1}}{q_{K-1}} + \varepsilon_n \right  \} \in [x_{i-1},x_{i}] \right \} \right | $, where $  \varepsilon_n := u q_{K-1} \left( \alpha - \frac{p_{K-1}}{q_{K-1}} \right) + n \left( \alpha - \frac{p_{K-1}}{q_{K-1}} \right) $. Since $2 | (K-1)$ and $ u < b \leq x_2 a_{K} $, we have that $ 0 \leq \varepsilon_n  \leq \frac{x_2}{q_{K-1}}$. Thus, we get 
\begin{align*}
\left | \left \{ 1 \leq n \leq q_{K-1} :\left \{ n \frac{p_{K-1}}{q_{K-1}} + \varepsilon_n  \right \} \in [x_{i-1},x_{i}] \right \} \right | 
& \geq 
\left | \left \{ 0 \leq n \leq q_{K-1} -1 :  \frac{n}{q_{K-1}}  \geq x_{i-1} ,  \frac{n}{q_{K-1}} + \frac{x_2}{q_{K-1}}  \leq x_{i} \right \} \right | \\
& = \lfloor x_i q_{K-1} \rfloor - \lceil x_{i-1} q_{K-1} \rceil + 1, \\
\end{align*}
where we used that, for $i \geq 3$, we have $x_i > x_2$ and thus, the largest integer $n$ such that $\frac{n}{q_{K-1}} + \frac{x_2}{q_{K-1}} \leq x_i$ is $ n= \lfloor x_i q_{K-1} \rfloor $. Moreover, the smallest $n$ such that $ \frac{n}{q_{K-1}} \geq x_{i-1}$ is $n= \lceil x_{i-1} q_{K-1} \rceil $. Since $q_{K-1} \equiv 1 \Mod{s_1 \ldots s_{\nu}}$, we have $ \{ q_{K-1} x_i \} = x_i$ for any $1 \leq i \leq \nu$ (recall that the $s_i$ denote the denominators of the rationals $x_i$) and thus, in case of $ i \geq 3$, we have shown that 
\begin{equation*}
\left | \left \{ 1 \leq n \leq q_{K-1} : \{ n \alpha + u q_{K-1} \alpha \} \in [x_{i-1},x_{i}] \right \} \right | \geq (x_i -x_{i-1}) q_{K-1} - (x_i -x_{i-1})
\end{equation*}
holds for any $0 \leq u \leq b - 1$. For the case $i=1$, a similar argument as before shows that for any $0 \leq u \leq b -1$, we have
\begin{equation*}
 \left | \left \{ 1 \leq n \leq q_{K-1} : \{ n \alpha + u q_{K-1} \alpha \} \in [0,x_1] \right \} \right | \geq x_1 q_{K-1} - x_1.
\end{equation*}
 
 Now we turn our attention to the case of $i=2$, where we establish a slightly different lower bound for $  \left | \left \{ 1 \leq n \leq q_{K-1} : \{ n \alpha + u q_{K-1} \alpha \} \in [x_1, x_2] \right \} \right | $. First, we consider those $u$ with $0 \leq u < x_1 a_{K} (1 + \varepsilon)$. We choose $ \varepsilon > 0$ small enough such that $x_1 (1 + 4 \varepsilon ) < x_2 $. In that case, we get the same lower bound as before, i.e., we establish $  \left | \left \{ 1 \leq n \leq q_{K-1} : \{ n \alpha + u q_{K-1} \alpha \} \in [x_1, x_2] \right \} \right | \geq (x_2 -x_1) q_{K-1} - (x_2 -x_1)$. We are now in the case, where $ x_1 a_{K} (1 + \varepsilon)  \leq u \leq x_2 a_{K} -1 $ (which is a complete case distinction, since $u < b \leq x_2 a_{K}$), we get $ \left | \left \{ 1 \leq n \leq q_{K-1} : \left \{ n \alpha + u q_{K-1} \alpha  \right \} \in [x_1, x_2] \right \} \right | = \left | \left \{ 1 \leq n \leq q_{K-1} : \left \{ n \frac{p_{K-1}}{q_{K-1}} + \varepsilon_n \right \} \in [x_1, x_2] \right \} \right | $, where we again set $ \varepsilon_n = u q_{K-1} \left( \alpha - \frac{p_{K-1}}{q_{K-1}} \right) + n \left( \alpha - \frac{p_{K-1}}{q_{K-1}} \right)  $. Since $2 | (K-1)$ and $ a_{K} x_1 ( 1 + \varepsilon ) \leq u \leq  a_{K} x_2 - 1 $, we have that for any $1 \leq n \leq q_{K-1}$, $ \frac{x_1 (1+ \varepsilon)}{q_{K-1}( 1 + o_{a_K}(1))} \leq \varepsilon_n \leq \frac{x_2 }{q_{K-1}} $. Further, we use that $ \frac{1+ \varepsilon}{1 + o_{a_K}(1)} \geq 1$ if $K$ is sufficiently large and hence $ \frac{x_1 }{q_{K-1}} \leq \varepsilon_n$. This gives us the estimate
\begin{align*}
\left | \left \{ 1 \leq n \leq q_{K-1} : \left \{ n \frac{p_{K-1}}{q_{K-1}} + \varepsilon_n \right \} \in [x_1, x_2] \right \} \right |
& \geq \left | \left \{ 0 \leq n \leq q_{K-1} -1 :   \frac{n}{q_{K-1}} + \frac{x_1 }{q_{K-1}} \geq x_1,   \frac{n}{q_{K-1}} + \frac{x_2}{q_{K-1}} \leq x_2 \right \} \right | .
\end{align*}
Here we used $\gcd(p_{K-1}, q_{K-1})=1$ which implies that $ n p_{K-1}$ runs through all remainder classes modulo $q_{K-1}$.
The smallest $0 \leq n \leq q_{K-1} -1$ such that $  \frac{n}{q_{K-1}} + \frac{x_1}{q_{K-1}} \geq x_1 $ is $n = \lfloor x_1 q_{K-1} \rfloor = x_1 q_{K-1} - x_1$, which follows from the congruence relation satisfied by $ q_{K-1}$. The largest $n$ such that $ \frac{n}{q_{K-1}} + \frac{x_2}{q_{K-1}} \leq x_2$ is $n= \lfloor x_2 q_{K-1} \rfloor = x_2 q_{K-1} - x_2$. These estimates lead to
\begin{align*}
\left | \left \{ 0 \leq n \leq q_{K-1} -1 :   \frac{n}{q_{K-1}} + \frac{x_1 }{q_{K-1}} \geq x_1,   \frac{n}{q_{K-1}} + \frac{x_2}{q_{K-1}} \leq x_2 \right \} \right | & = \lfloor x_2 q_{K-1} \rfloor - \lfloor x_1 q_{K-1} \rfloor + 1 \\
& = (x_2 -x_1) q_{K-1} + (1 -(x_2 -x_1)) . 
\end{align*}
Now we can combine all the estimates we obtained before, in order to get
\begin{align*}
\sum_{i=1}^{\nu} c_i \sum_{u=0}^{b -1 }  \left | \left \{ 1 \leq n \leq q_{K-1} : \{ n \alpha + u q_{K-1} \alpha \}   \in [x_{i-1}, x_i] \right \} \right | & - (x_i - x_{i-1}) q_{K-1} \\
& \geq - \sum_{i=1}^{\nu} c_i \sum_{u=0}^{b -1 }   (x_i - x_{i-1})   \\ 
& \phantom{=}+  c_2 \left | \{ 0 \leq u \leq b- 1 :   a_{K} x_1 ( 1 + \varepsilon ) \leq u \leq  a_{K} x_2 - 1 \} \right | \\
& = 0 + c_2 (  b - \lceil a_{K} x_1 (1 + \varepsilon ) \rceil ) \\
& \geq a_{K} c_2 \left ( \frac{x_1 + x_2}{2} - (1 + \varepsilon ) x_1  \right)  - 2 \\
& \geq a_{K} c_2 \left ( \frac{x_1 + x_2}{2} - (1 + 2 \varepsilon ) x_1  \right) . 
\end{align*}
We used the overall assumption of $ \sum_{i=1}^{\nu} c_i  (x_i - x_{i-1}) = 0$ and $ -2 \geq - c_2 a_K \varepsilon  $, since $a_K$ is large. The proof is now finished by defining $d := c_2 \left ( \frac{x_1 + x_2}{2} - (1 + 2 \varepsilon )  x_1 \right)  > 0$. 
\end{proof}

\subsection{Refining the Borel-Bernstein Theorem}

We see that Lemma \ref{lem_case_ud1} and Lemma \ref{lem_case_posud} give us lower bounds on Birkhoff sums, provided that $a_K$ does not only dominate the sum of the preceding partial quotients, but both
$K$ and $q_{K-1}$ additionally satisfy certain modularity conditions. Without having to satisfy these extra conditions, the existence of infinitely many such $a_K$ for generic $\alpha$ could be deduced from a combination of the Borel-Bernstein Theorem and the estimate \eqref{Eq_trimmed_sum} on the trimmed sum of partial quotients. The aim of this section is to establish a version of the Borel-Bernstein Theorem (Lemma \ref{LemInfiniteSets}) that allows to include additional assumptions on $K$ and $q_{K-1}$. We make use of some known auxiliary results that are stated for the reader's convenience in full detail below.

\begin{lem}(Lemma C in \cite{ErdoesRenyi1959})
\label{LemBorelCantelli}
Let $(\Omega, \mathcal{F}, \mathbb{P})$ be a probability space with events $(A_n)_{n \in \N}$ such that $ \sum_{n=1}^{\infty} \mathbb{P} [A_n] = \infty $ and
\begin{equation*}
\limsup_{N \rightarrow \infty } \frac{ \left(  \sum_{K=1}^{N} \mathbb{P}[A_K] \right)^2 }{\sum_{K,L=1}^{N} \mathbb{P}[A_K \cap A_L]} =1.
\end{equation*}
Then, 
\begin{equation*}
\mathbb{P}[ \limsup_{n \rightarrow \infty} A_n] = 1.
\end{equation*}
\end{lem}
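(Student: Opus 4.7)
The plan is the classical second moment method. Introduce the non-negative counting random variable $X_N := \sum_{K=1}^{N} \mathds{1}_{A_K}$, so that the hypotheses of the lemma read $\mathbb{E}[X_N] = \sum_{K=1}^N \mathbb{P}[A_K] \to \infty$ and $\limsup_{N \to \infty} (\mathbb{E}[X_N])^2 / \mathbb{E}[X_N^2] = 1$. An application of the Cauchy--Schwarz inequality to the identity $X_N = X_N \cdot \mathds{1}_{\{X_N > 0\}}$ yields the pointwise Paley--Zygmund bound
\[\mathbb{P}[X_N > 0] \geq \frac{(\mathbb{E}[X_N])^2}{\mathbb{E}[X_N^2]}.\]
Passing to the $\limsup$ in $N$ and invoking the second hypothesis then gives $\mathbb{P}[\bigcup_{K \geq 1} A_K] = 1$, i.e.\ "at least one $A_K$ occurs almost surely".

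The main step of the proof is to upgrade this to "infinitely many $A_K$ occur almost surely", which is precisely the event $\limsup_n A_n$. For an arbitrary fixed $m \in \N$ I would apply the same Paley--Zygmund argument to the truncated variable $Y_N := \sum_{K=m}^N \mathds{1}_{A_K} = X_N - X_{m-1}$. Since $X_{m-1}$ is deterministically bounded by $m-1$, we have $\mathbb{E}[Y_N] = \mathbb{E}[X_N] - O_m(1)$ and $\mathbb{E}[Y_N^2] = \mathbb{E}[X_N^2] + O_m(\mathbb{E}[X_N])$. The assumption forces $\mathbb{E}[X_N^2] \sim (\mathbb{E}[X_N])^2$ along some subsequence $N_k \to \infty$, and since $\mathbb{E}[X_N] \to \infty$ along that subsequence the additive corrections become negligible when taking the ratio. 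Hence
\[\limsup_{N \to \infty} \frac{(\mathbb{E}[Y_N])^2}{\mathbb{E}[Y_N^2]} = 1,\]
and the first step gives $\mathbb{P}[\bigcup_{K \geq m} A_K] = 1$ for every $m \in \N$. Continuity of probability from above then yields $\mathbb{P}[\limsup_n A_n] = \lim_{m \to \infty} \mathbb{P}[\bigcup_{K \geq m} A_K] = 1$, as required.

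The only slightly delicate point is the elementary bookkeeping that verifies the $\limsup = 1$ hypothesis is inherited by the tail sequence $(A_K)_{K \geq m}$; once this is settled there is no genuine obstacle. I would not expect to need any independence or mixing assumption on the events $A_n$, which is exactly what makes this lemma a useful replacement for the classical independent-events version of the second Borel--Cantelli lemma in the continued-fraction setting where the events $\{a_k > \psi(k)\}$, $\{q_{k-1} \equiv \gamma \pmod{\delta}\}$ etc.\ exhibit nontrivial but quantitatively controllable correlations.
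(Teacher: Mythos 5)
The paper does not prove this lemma; it is stated as a citation of Lemma~C in Erd\H{o}s--R\'enyi~(1959), so there is no in-paper argument to compare against. Your proof is correct and is essentially the standard second-moment (Paley--Zygmund) derivation of this Chung--Erd\H{o}s/Kochen--Stone type strengthening of Borel--Cantelli. The two points that need checking are both sound: Paley--Zygmund applied to $X_N=\sum_{K\le N}\mathds{1}_{A_K}$ indeed gives $\mathbb{P}[\bigcup_{K\ge 1}A_K]\ge\limsup_N (\mathbb{E}X_N)^2/\mathbb{E}X_N^2=1$ (using that $\mathbb{P}[X_N>0]$ is monotone in $N$, so the $\limsup$ on the left is a genuine limit); and the tail bookkeeping works because $0\le Y_N\le X_N$ pointwise gives $\mathbb{E}[Y_N^2]\le\mathbb{E}[X_N^2]$, while $\mathbb{E}[Y_N]\ge\mathbb{E}[X_N]-(m-1)$, so along a subsequence with $(\mathbb{E}X_N)^2/\mathbb{E}X_N^2\to 1$ and $\mathbb{E}X_N\to\infty$ the ratio $(\mathbb{E}Y_N)^2/\mathbb{E}Y_N^2$ is squeezed to~$1$. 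A marginally slicker route, avoiding the tail-truncation altogether, is the Kochen--Stone variant: apply Paley--Zygmund to bound $\mathbb{P}[X_N\ge\varepsilon\,\mathbb{E}X_N]\ge(1-\varepsilon)^2(\mathbb{E}X_N)^2/\mathbb{E}X_N^2$, note that $\limsup_N\{X_N\ge\varepsilon\,\mathbb{E}X_N\}\subseteq\limsup_n A_n$ because $\mathbb{E}X_N\to\infty$, invoke the reverse Fatou inequality, and let $\varepsilon\to 0$. Both routes are elementary and use no independence, which, as you observe, is exactly what makes the lemma usable for the correlated events arising from continued-fraction digits in the paper.
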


\begin{lem}(Lemma 2.5 in \cite{Kesten1960})
\label{LemKesten} 
Let $v \in \N$ with $v \geq 2$, and $0 \leq u \leq v-1$ and define $k = k(v) := \left | \{(u_1,u_2): 0 \leq u_1,u_2 < v: \gcd(u_1,u_2,v) = 1\} \right | $. Further, let $m,p \in \N$, $ \alpha  \sim \U ( [0,1))$, define \\ $\mathscr{F} := \sigma \big( a_{1}(\alpha ),..., a_{m}(\alpha ) \big) $ and let $E \in \sigma \big( a_{m+p+1}(\alpha ), a_{m+p+2}(\alpha ),... \big)$. Then, there exist constants $C > 0$ and $ \lambda \in (0,1)$ such that
\begin{equation}\label{decay} 
\left | \mathbb{P} \left [  q_{m+p}(\alpha ) \equiv u \Mod{v}, \alpha  \in E \ | \mathscr{F} \right]  - \frac{1}{k} \mathbb{P} \left[\alpha \in E \right] \right | \leq C \lambda^{\sqrt{p}} .
\end{equation}
\end{lem}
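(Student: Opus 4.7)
The plan is to combine two independent mixing phenomena. The pair $X_n := (q_{n-1}, q_n) \Mod{v}$ is, via the recursion $q_n = a_n q_{n-1} + q_{n-2}$, a Markov chain on the finite state space $S_v := \{(u_1, u_2) \in (\mathbb{Z}/v\mathbb{Z})^2 : \gcd(u_1, u_2, v) = 1\}$ of cardinality $k = k(v)$, with transition $(u_1, u_2) \mapsto (u_2, a u_2 + u_1)$ driven by $a = a_n \Mod{v}$. For each fixed $a$ this map is a bijection of $S_v$, so the averaged transition kernel is doubly stochastic and $\mathrm{Unif}(S_v)$ is stationary, assigning mass $1/k$ to each state. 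Under the Gauss measure every residue class of $a$ has positive probability, so after finitely many steps a Doeblin minorisation yields a geometric total-variation bound $\|\mathcal{L}(X_{n+p} \mid X_n = x) - \mathrm{Unif}(S_v)\|_{\mathrm{TV}} \leq C_1 \lambda_1^p$, uniformly in $x \in S_v$. Because $X_m$ is $\mathscr{F}$-measurable, this immediately gives $|\mathbb{P}[X_{m+p} = s \mid \mathscr{F}] - 1/k| \leq C_1 \lambda_1^p$ for each $s \in S_v$.

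The coupling with the tail event $E \in \sigma(a_{m+p+1}, a_{m+p+2}, \ldots)$ is where the rate $\lambda^{\sqrt{p}}$ enters, via a two-scale decomposition. I would split $p = p_1 + p_2$ with $p_1 = \lfloor \sqrt{p} \rfloor$ and condition on the intermediate digits $a_{m+1}, \ldots, a_{m+p_1}$ by the tower property. After these $p_1$ steps the chain has equilibrated, so $X_{m+p}$ is independent of $X_m$ up to error $O(\lambda_1^{p_1})$. Simultaneously, the $\psi$-mixing property of the continued-fraction process, equivalently exponential decay of correlations for the Gauss map $T(x) = \{1/x\}$ (the Kuzmin--L\'evy theorem), decouples $E$ from the $\sigma$-algebra generated by $a_1, \ldots, a_{m+p_1}$ across the remaining gap of $p_2$ digits: $|\mathbb{P}[E \mid a_1, \ldots, a_{m+p_1}] - \mathbb{P}[E]| = O(\lambda_2^{p_2})$. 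Assembling both estimates via the triangle inequality yields a total error of order $\lambda_1^{p_1} + \lambda_2^{p_2}$; choosing $p_1 = \lfloor \sqrt{p} \rfloor$ and setting $\lambda := \max(\lambda_1, \lambda_2)^{1/2}$ balances the two contributions and produces the claimed bound $C \lambda^{\sqrt{p}}$.

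I expect the main obstacle to be making the two mixing estimates coherent. On the Markov-chain side the Doeblin bound is uniform in the starting state, so the $\mathscr{F}$-dependence is absorbed automatically; however, the Gauss-map $\psi$-mixing estimate is naturally phrased under the invariant Gauss measure, whereas here one works with the conditional law given $\mathscr{F}$ and given the intermediate digits. Transferring the estimate requires a bounded-density comparison between these two laws, which is standard via Kuzmin but needs care when the intermediate block contains exceptionally large partial quotients. A secondary bookkeeping point is that $q_{m+p} \Mod{v}$ is only one coordinate of the pair $X_{m+p}$, so the cleanest formulation proves the bound at the level of pairs on $S_v$ and then interprets the constant $1/k$ as the uniform mass of a point in $S_v$; once this is in place, the two-scale argument gives the stated estimate directly.
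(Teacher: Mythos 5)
The paper does not prove this statement: it is cited verbatim as Lemma 2.5 of Kesten (1960), and the remark following it in the paper explicitly defers to Kesten (1962) for the identification of the admissible state space (hence the value of $k$) and to Sz\"usz (1962) for the sharpening of the rate from $\lambda^{\sqrt{p}}$ to $\lambda^{p}$. So there is no in-paper argument to compare yours against; what follows is an assessment of your plan on its own terms.

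Your outline captures the right ingredients but contains three real gaps, none of which is the sort of bookkeeping you anticipate at the end. First, $X_n=(q_{n-1},q_n)\bmod v$ is \emph{not} a Markov chain under Lebesgue (or Gauss) measure: for that you would need $a_{n+1}$ to be conditionally independent of the past given $X_n$, but the partial quotients are dependent. The correct Markov object is the pair $(T^n\alpha, X_n)$ on $[0,1)\times S_v$, and a Doeblin minorisation on the finite fibre $S_v$ alone is not available; what is needed is a transfer-operator estimate for the skew product. Second, ``doubly stochastic, hence $\mathrm{Unif}(S_v)$ is stationary'' gives you a stationary law but not irreducibility or aperiodicity on $S_v$. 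That the chain actually reaches all of $S_v$ is precisely the nontrivial content of Kesten (1962), as the paper's remark notes, and cannot be taken for granted. Third, and most importantly, your two-scale split does not separate the two events that actually need decoupling. The event $\{q_{m+p}\equiv u\}$ lives in $\sigma(a_1,\dots,a_{m+p})$ and $E$ lives in $\sigma(a_{m+p+1},\dots)$; these $\sigma$-algebras are adjacent, with zero gap. Conditioning on $\sigma(a_1,\dots,a_{m+p_1})$ and invoking $\psi$-mixing across the remaining $p_2$ digits does decouple $E$ from the initial block, but $q_{m+p}$ is not $\sigma(a_1,\dots,a_{m+p_1})$-measurable --- it depends on exactly the $p_2$ digits that sit flush against $E$. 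Note that $\mathbb{P}[E\mid a_1,\dots,a_{m+p}]$ depends only on the ratio $\theta:=q_{m+p-1}/q_{m+p}$ (the conditional density of $T^{m+p}\alpha$ on the cylinder is $(1+\theta)/(1+t\theta)^2$), and this quantity is \emph{not} close to $\mathbb{P}[E]$; it merely lies within a bounded factor. So the heart of the lemma is the asymptotic independence of the residue pair $X_{m+p}$ from the ratio $\theta$, given $\mathscr{F}$, together with uniformisation of $X_{m+p}$ on $S_v$. That coupled statement is genuinely harder than the two one-sided mixing facts you interleave, and it is not recovered by Kuzmin density comparisons alone. As a side remark, if your interleaving did close, taking $p_1=p_2=p/2$ would already yield an exponential rate $\lambda^{p}$ rather than $\lambda^{\sqrt{p}}$; the $\sqrt{p}$ in Kesten's bound is an artifact of his specific (and subtler) argument, later removed by Sz\"usz.
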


\begin{rem}
    In \cite{Kesten1960}, the quantity $k$ from Lemma \ref{LemKesten} is not given explicitly. The fact that all pairs $(u_1,u_2)$ with $ \gcd (u_1,u_2,v) = 1$ are admissible and thus $k(v)$ can be defined as in Lemma \ref{LemKesten} follows from \cite{Kesten1962}. The decay rate of \eqref{decay} was improved in \cite{Szusz} to be exponential, that is of the form $C\lambda^{p}$.
\end{rem}

\begin{lem}
\label{LemInfiniteSets}
Let $\psi : \R_+ \rightarrow \R_+ $ be a monotonically increasing function such that $ \sum_{K=1}^{\infty} \frac{1}{\psi(K)} = \infty$ and let $b,d \in \N$ with $b,d \geq 2$. Then, for any fixed $0 \leq a \leq b-1$ and $0 \leq c \leq d-1$, for almost all $\alpha \in [0,1)$, the set
\begin{align*}
&  \left \{ K \in \N \ \Bigg | \  K \equiv a \Mod{b} , q_{K-1}  \equiv  c \Mod{d}, \psi(K) < a_K < K^2, \sum_{i=1}^{K-1}a_i \leq 2  K \log K  \right \}
\end{align*}
has infinite cardinality.
\end{lem}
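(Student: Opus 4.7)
The plan is to apply the quantitative Borel--Cantelli lemma (Lemma~\ref{LemBorelCantelli}) to the events
\[
A_K := \{ \psi(K) < a_K < K^2 \} \cap \{ q_{K-1} \equiv c \Mod{d} \} \cap \Bigl\{ \sum_{i=1}^{K-1} a_i \leq 2 K \log K \Bigr\}
\]
indexed along the arithmetic progression $K \equiv a \pmod{b}$. Monotonicity of $\psi$ combined with $\sum_k 1/\psi(k) = \infty$ yields divergence of the restricted series $\sum_{K \equiv a \pmod b} 1/\psi(K)$, so this restriction is costless.

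First I would establish a lower bound $\mathbb{P}(A_K) \gg 1/\psi(K)$. Under Lebesgue (equivalent to Gauss) measure, the Gauss--Kuzmin statistics give $\mathbb{P}(\psi(K) < a_K < K^2) \asymp 1/\psi(K)$; Lemma~\ref{LemKesten} applied with $m=0$ and $p = K-1$ gives $\mathbb{P}(q_{K-1} \equiv c \Mod{d}) \geq 1/k(d) - C\lambda^{\sqrt{K-1}}$, bounded below by a positive constant for $K$ large (and for $c$ admissible, which in the applications is guaranteed by the choices of $\gamma_j, \delta_j$ in Lemmas~\ref{lem_case_ud1}--\ref{lem_case_posud}); the trimmed-sum asymptotic \eqref{Eq_trimmed_sum} together with the Borel--Bernstein tail bound $\mathbb{P}(\max_{\ell \leq K-1} a_\ell > 0.4\,K \log K) = O(1/\log K)$ shows that the third event has probability tending to $1$. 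Since the first constraint is measurable with respect to $\sigma(a_K)$ while the latter two live in $\sigma(a_1, \ldots, a_{K-1})$, exponential mixing of the Gauss map makes them asymptotically independent, yielding $\mathbb{P}(A_K) \gg 1/\psi(K)$ and hence $\sum_K \mathbb{P}(A_K) = \infty$.

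The main work is to verify the Erdős--Rényi ratio condition. For $K < L$ both in the progression I would condition on $\sigma(a_1, \ldots, a_K)$, which already determines $A_K$, and split $A_L$ into the modular constraint $\{q_{L-1} \equiv c \Mod d\}$ (decoupled from $\sigma(a_1,\dots,a_K)$ up to an error $C\lambda^{\sqrt{L-K}}$ by Lemma~\ref{LemKesten} with $m = K$ and $p = L-K-1$) and the remaining constraints on $a_L$ and on $\sum_{i < L} a_i$ (decoupled from the past via exponential mixing of the Gauss map and \eqref{Eq_trimmed_sum}). This gives $\mathbb{P}(A_K \cap A_L) \leq (1 + o(1))\,\mathbb{P}(A_K)\mathbb{P}(A_L) + C\lambda^{\sqrt{L-K}}$ when $L-K$ is large; for close pairs with $L - K \leq (\log N)^2$ the trivial bound $\mathbb{P}(A_K \cap A_L) \leq \mathbb{P}(A_K)$ suffices, and their contribution to the double sum is $o\bigl((\sum_{K \leq N} \mathbb{P}(A_K))^2\bigr)$ since $\sum_{K \leq N} \mathbb{P}(A_K) \to \infty$.

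Putting these ingredients together, Lemma~\ref{LemBorelCantelli} yields $\mathbb{P}(\limsup_K A_K) = 1$, which is precisely the claim. The main obstacle I anticipate is the careful bookkeeping in the quasi-independence step: three conceptually different types of constraints (local in $a_K$, global in $\sum_{i < K} a_i$, and modular in $q_{K-1}$) must be decoupled simultaneously at both indices $K$ and $L$, and one must verify that the subexponential error $C\lambda^{\sqrt{L-K}}$ from Lemma~\ref{LemKesten}, once summed over $K, L \leq N$, is still negligible compared with $(\sum_{K \leq N}\mathbb{P}(A_K))^2$.
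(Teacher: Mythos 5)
Your overall strategy---a quantitative Borel--Cantelli argument (Lemma~\ref{LemBorelCantelli}) combined with Kesten's correlation estimate (Lemma~\ref{LemKesten})---matches the paper's, but there is a genuine gap in how you handle the trimmed-sum constraint $\bigl\{\sum_{i<K} a_i \leq 2K\log K\bigr\}$. This event is measurable with respect to $\sigma(a_1,\ldots,a_{K-1})$, i.e.\ it depends on the entire past, not only on the partial quotient $a_K$. Consequently, when you try to estimate $\mathbb{P}(A_K \cap A_L)$ for $K<L$ by conditioning on $\sigma(a_1,\ldots,a_K)$, the trimmed-sum part of $A_L$ cannot be placed in the future $\sigma$-algebra $\sigma(a_{L},a_{L+1},\ldots)$ that Kesten's lemma requires, and ``exponential mixing of the Gauss map'' does not decouple it from the past---the two events share the coordinates $a_1,\ldots,a_K$. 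For the same reason, your lower bound $\mathbb{P}(A_K)\gg 1/\psi(K)$ is not established: the trimmed-sum and modularity events both live in $\sigma(a_1,\ldots,a_{K-1})$, and a union bound against $\mathbb{P}(\text{trimmed sum fails})=O(1/\log K)$ is far too lossy when $\psi$ grows faster than $\log K$.

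The paper sidesteps all of this by never putting the trimmed-sum condition into the Borel--Cantelli events. After the harmless reduction to $\psi(K)/(K\log K)\to\infty$, it works with $E_K := \{K\equiv a\ (\mathrm{mod}\ b),\, \psi(K)<a_K<K^2\}$ (depending only on $a_K$) and $A_K := E_K\cap\{q_{K-1}\equiv c\ (\mathrm{mod}\ d)\}$, for which $\mathbb{P}[A_K\mid A_L] = \tfrac{1}{k}\mathbb{P}[E_K] + O(\lambda^{\sqrt{K-L}})$ is a direct application of Kesten's lemma, giving the summable error $\mathbb{P}[A_L]\,O(\lambda^{\sqrt{K-L}})$. (Your error term $C\lambda^{\sqrt{L-K}}$ without the factor $\mathbb{P}[A_K]$ also makes the final summation estimate more delicate than you indicate.) The trimmed-sum bound is then obtained \emph{a posteriori} and for free: since $\psi(K)>K\log K/\log 2$ for large $K$, any index $K$ with $a_K>\psi(K)$ must, by the almost-sure Diamond--Vaaler asymptotic \eqref{Eq_trimmed_sum}, have $a_K$ as the unique maximum among $a_1,\ldots,a_K$, whence $\sum_{i<K}a_i = \sum_{i\leq K}a_i - \max_{i\leq K}a_i \leq 2K\log K$ for large $K$ a.s. The paper also proves divergence $\sum_K\mathbb{P}[A_K]=\infty$ via a neat trick---redefining $\tilde\psi(K)=\psi(K)$ on the progression and $\tilde\psi(K)=K^2$ off it, then applying classical Borel--Bernstein once---rather than your Gauss--Kuzmin lower bound, though both routes work for that step. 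You should adopt the paper's separation of the trimmed-sum condition; as written, the quasi-independence step in your proposal does not go through.
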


\begin{rem}
    In particular, Bernstein's Theorem can be strengthened in the following way:
 For any monotonic increasing function $\psi: \mathbb{\R_+ } \to \mathbb{R}_{+}$ and any positive
 integers $a,b,c,d$ we have, for almost every $\alpha \in [0,1)$,
    \begin{equation*}
        \left | \left\{K \in \mathbb{N}: a_K > \psi(K),  K \equiv a \Mod{b} , q_{K-1}  \equiv  c \Mod{d}\right\} \right |  \text{ is } \begin{cases} \text{ infinite \hspace{5mm}if\;\; } \sum_{K = 0}^{\infty} \frac{1}{\psi(K)} = \infty,
        \\ \text{ finite \hspace{7mm} if \;\;}\sum_{K = 0}^{\infty} \frac{1}{\psi(K)} < \infty.\end{cases}
    \end{equation*}

The method of proof even allows replacing the condition $ K \equiv a \Mod{b}$ with a condition of the form $K \in A \subseteq \N$, where $A$ has positive lower density. For our purpose the given version is sufficient.
\end{rem}

\begin{proof}
We first show that for almost every $ \alpha \in [0,1)$, the set $ \big  \{ K \in \N \ | \ K \equiv a \Mod{b} , \psi(K) < a_K < K^2 \big \} $ has infinite cardinality. We can assume without loss of generality that $\liminf_{K \to \infty} \frac{\psi(K)}{K \log K} = \infty$ since the result then also follows for all slower growing $\psi $. Now we define
\begin{equation*}
    \tilde{\psi }(K) := 
    \begin{cases}
     \psi \left( K \right ) & \text{ if } K \equiv a \pmod b\\
     K^{2} & \text{ else}.
    \end{cases}
\end{equation*}

Since $\psi$ is monotone, we have that $\sum_{K=1}^{\infty} \frac{1}{\tilde{\psi}(K)} = \infty$ 
$\left( \text{since } \sum_{K =1}^{Nb} \frac{1}{\tilde{\psi}(K)} \geq \frac{1}{b}\sum_{K =b}^{Nb} \frac{1}{\psi(K)} \underset{N \to \infty}{\to} \infty\right).$
By \eqref{Eqbernstein}, there exist infinitely many $K$ such that $a_K > \tilde{\psi}(K)$. Again by \eqref{Eqbernstein}, there are only finitely many $K \in \N$ such that $a_K > K^2$ and thus, we can conclude that there are infinitely many $K \in \N $ with
$K \equiv a \pmod b$ and $\psi(K) < a_K < K^2$. Now we introduce the sets 
\begin{align*}
E_K &:= \left \{ \alpha \in [0,1) \ | \  K \equiv a \Mod{b} , \psi(K) < a_K < K^2 \right  \}, \\
A_K &:= \left  \{ \alpha \in [0,1) \ | \   K \equiv a \Mod{b} , q_{K-1} \equiv c \Mod{d}, \psi(K) < a_K < K^2 \right \}.
\end{align*}
In the following, we show that Lemma \ref{LemBorelCantelli} can be applied to the sequence of sets $(A_{K})_{K \in \N}$. To that end, we define $ k = \left | \{ 0 \leq u_1,u_2 \leq d-1 \ | \ \gcd(u_1,u_2,d)=1 \} \right | $ and we note that $E_K$ only depends on $a_{K}$. Further, we will denote the $1$-dimensional Lebesgue measure on $[0,1)$ by $\mathbb{P}$. Using Lemma \ref{LemKesten}, we get
\begin{equation}
\label{EqProbAKviaEK}
\mathbb{P} \left[ A_K \right] = \frac{1}{k} \mathbb{P} \left[ E_K \right] + O \left( \lambda^{\sqrt{K}} \right),
\end{equation}
with $\lambda \in (0,1)$. This gives us $\sum_{K=1}^{ \infty} \mathbb{P} \left[ A_K \right] = \infty$, since there exist infinitely many $K \in \N$ such that $ \mathbb{P}[E_K] =1$ by the first part of this proof. This shows the first assumption in Lemma \ref{LemBorelCantelli}, i.e. $\sum_{K = 1}^{\infty } \mathbb{P}[A_K] = \infty $. Now we take $K,L \in \N$ with $L+1 \leq K$ and consider 
\begin{align*}
\mathbb{P}\left[ A_K \cap A_L \right] & = \mathbb{P}[A_L] \mathbb{P} \left[ A_K | A_L \right] \\
&= \mathbb{P}\left[ A_L \right] \mathbb{P} \left[ \psi(K) <  a_{K} < K^2  , q_{K-1} \equiv c \Mod{d} , K \equiv a \Mod{b} | A_L \right] \\
&= \mathbb{P}\left[ A_L \right] \left(  \frac{1}{k} \mathbb{P}\left[ E_K \right] + O\left( \lambda^{\sqrt{K-L}} \right) \right) \\
& = \mathbb{P}\left[ A_L \right] \mathbb{P} \left[ A_K \right] + \mathbb{P}[A_L] O\left( \lambda^{\sqrt{K-L}} \right),
\end{align*}
where we employed Lemma \ref{LemKesten} and used the estimate from \eqref{EqProbAKviaEK} in the last line. We fix $N \in \N$ sufficiently large and consider
\begin{equation}
\label{EqSumofProbsAK}
    \sum_{K,L =1}^N \mathbb{P} \left[ A_K \cap A_L \right] =  \sum_{K,L =1}^N \mathbb{P} \left[ A_K \right]  \mathbb{P} \left[ A_L \right] + 2 \sum_{L+1 \leq K \leq N} \mathbb{P}[A_L] O \left( \lambda^{ \sqrt{K-L}} \right) +  \sum_{K=1}^{N} \left( \mathbb{P} \left[ A_K \right] - \mathbb{P} \left[ A_{K} \right]^2 \right) .
\end{equation}
Next, we obtain an upper bound for two of the sums in \eqref{EqSumofProbsAK}. First, we get
\begin{equation*}
2 \sum_{L+1 \leq K \leq N} \mathbb{P}[A_L] O \left( \lambda^{ \sqrt{K-L}} \right) = 2 \sum_{L=1}^{N-1} \mathbb{P} \left[ A_L \right] \sum_{K=L+1}^N O \left( \lambda^{\sqrt{K-L}} \right) \leq O(1) \sum_{L=1}^N\mathbb{P} \left[ A_L \right],
\end{equation*}
where we used that $\lambda \in (0,1)$. Moreover, we get
\begin{equation*}
 \sum_{K=1}^{N} \left( \mathbb{P} \left[ A_K \right] - \mathbb{P} \left[ A_{K} \right]^2 \right) \leq \sum_{K=1}^N \mathbb{P}[A_K]
\end{equation*}
and thus, we have
\begin{align*}
 \sum_{L+1 \leq K \leq N} \mathbb{P}[A_L] O \left( \lambda^{ \sqrt{K-L}} \right) + \sum_{K=1}^{N} \left( \mathbb{P} \left[ A_K \right] - \mathbb{P} \left[ A_{K} \right]^2 \right) \leq O(1) \sum_{K=1}^N \mathbb{P}[A_K] = o \left( \left( \sum_{K=1}^N \mathbb{P}[A_K] \right)^2 \right).
\end{align*}
For the equality in the previous equation, we used that $ \sum_{K=1}^{ \infty } \mathbb{P} [A_K] = \infty$. In total, we have shown that 
\begin{equation*}
\limsup_{N \rightarrow \infty } \frac{ \left(  \sum_{K=1}^{N} \mathbb{P}[A_K] \right)^2 }{\sum_{K,L=1}^{N} \mathbb{P}[A_K \cap A_L]} = \limsup_{N \rightarrow \infty } \frac{1}{1+ o_N(1)} =1.
\end{equation*}
Lemma \ref{LemBorelCantelli} now gives us that $\mathbb{P} \left[ \limsup_{K \rightarrow \infty} A_K \right] =1 $ or, in other words, for almost all $\alpha \in [0,1)$, there are infinitely many $K \in \N$ such that $ \psi(K) < a_{K} < K^2 $ , $K \equiv a \Mod{b}$ and  $q_{K-1} \equiv c \Mod{d}$. 
\par{}
For $K$ sufficiently large, we have
$\psi(K) > \frac{K \log K}{\log (2)}$, so \eqref{Eq_trimmed_sum} shows that
$\max\limits_{\ell \leq K-1} a_{\ell} < a_K$ and thus, applying \eqref{Eq_trimmed_sum} again
leads to $\sum\limits_{i= 1}^{K-1} a_{i} \leq 2 K \log K$ (note that $ \frac{1}{\log 2} \leq 2$).
\end{proof}

We have now all ingredients to turn our attention to the proofs of Theorem \ref{ThmUpperDensDiscFunctionsRefined} and Theorem \ref{ThmKoksRefined}.

\subsection{Proofs of Theorem \ref{ThmUpperDensDiscFunctionsRefined} and Theorem \ref{ThmKoksRefined}}

Note that the class of functions considered in both Theorems \ref{ThmUpperDensDiscFunctionsRefined} and \ref{ThmKoksRefined} is closed under translation by rational numbers, and by Lemma \ref{LemRepresentationf} the same holds for the condition $\sum_{i =1}^{\nu}A_i \neq 0 \text{ or } \sum_{i =1}^{\nu}c_i(x_{i}-x_{i-1}) \neq 0$. Thus, we can assume without loss of generality that $q = 0$. Moreover, we can assume that $ \lim_{K \rightarrow \infty } \frac{\psi(K)}{K \log K} = \infty $ since the result then follows also for slower growing $\psi$.

Assume first that $\sum_{i = 1}^{ \nu} A_i \neq  0 $ or $ \sum_{i=1}^{ \nu} c_i ( x_{i} - x_{i-1}) \neq 0$.
Let $c,c', \alpha_1,\beta_1,\gamma_1,\delta_1$ be as in Lemma \ref{lem_case_ud1}. By Lemma \ref{LemInfiniteSets}, for almost every $\alpha$, there exist infinitely many $K$ such that
\begin{align*}
&  \left \{ K \in \N \Bigg |  K \equiv \alpha_1 \Mod{\beta_1} , q_{K-1}  \equiv  \gamma_1 \Mod{\delta_1}, \tilde{\psi}(K) < a_K < K^2, \sum_{i=1}^{K-1}a_i \leq 2  K \log K  \right \},
\end{align*}
where $\tilde{\psi}(k) = C_1 \psi( C_2 k)$ with $C_1, C_2 > 0$ specified later. Denote by $(k_j)_{j \in \mathbb{N}}$ the increasing sequence of integers such that the above holds.
Now let $N \leq c a_{k_j}q_{k_j-1}$ be arbitrary. Thus, we can write $N = b_{k_j-1}q_{k_j-1} + N'$ where $N' < q_{k_j -1}$ and  $b_{k_j-1} \leq c a_{k_j}$. Defining $g$ as in Lemma \ref{LemRepresentationf} and $\Tilde{g}(x) = g(x + b_{k_j-1}q_{k_j-1}\alpha)$, we have

\begin{equation}
\label{pf_ud_1}
\begin{split}
S_N(f,\alpha) &= S_N(g,\alpha) + O(1)\\
&= S_{b_{k_j-1}q_{k_j-1}}(g,\alpha) +  S_{N'}(\Tilde{g},\alpha) + O(1)\\
&\geq   b_{k_j-1}c' +  O\left(\sum_{i=1}^{k_j-1} a_i\right),
\end{split}
\end{equation}

where we used Lemma \ref{lem_case_ud1} and the Denjoy--Koksma inequality in the last line. Let $M_j := \lfloor c  a_{k_j }q_{k_j-1} \rfloor $ and, for $\delta > 0$, we define the set $A_j^{ \delta} :=  \Bigg \{ 1 \leq N \leq M_j : \delta \leq \frac{b_{k_j-1}(N)}{a_{k_j}}  \Bigg\}  $. 
We note that $\lim_{\delta \rightarrow 0 } | A_j^{ \delta} | = M_j $. Thus, fixing $\varepsilon> 0$, we can choose $\delta > 0$ such that $ \frac{| A_j^{ \delta} |}{M_j} \geq 1- \varepsilon$ for all sufficiently large $j$. Let $C_1, C_2 > 0$ such that, if $ a_{k_j} \geq \tilde{\psi}(k_j) = C_1 \psi( C_2 k_j)$ it follows that $ b_{k_j -1}c' + O\left( \sum_{i=1}^{k_j -1} a_i  \right) \geq \psi( \log N)$ for all $N \in A_j^{\delta} $. We note that $C_1, C_2$ only depend on $ \delta > 0$, since $k_j$ is chosen such that $ \sum_{i=1}^{k_j -1 } a_i \leq 2 k_j \log (k_j) = o( \psi( k_j) ) $. This yields
\begin{align*}
\frac{ \left | \left \{  1 \leq N \leq M_j : S_N( f, \alpha) \geq \psi( \log N )  \right \}  \right | }{M_j} & \geq  \frac{ \left | \left \{  N \in A_j^{\delta} : b_{k_j-1}c' +  O\left(\sum_{i=1}^{k_j-1} a_i\right) \geq \psi( \log N ) \right \}   \right |}{M_j} \\
& \geq \frac{ \left | \left \{  N \in A_j^{\delta} :  a_{k_j }   \geq \tilde{\psi}( k_j  )  \right \} \right |}{M_j} \\
& = \frac{|A_j^{ \delta} | }{M_j} \geq 1- \varepsilon. 
\end{align*}

By taking the limes inferior as $j \rightarrow \infty $ and letting $\varepsilon\rightarrow 0$, we get
\begin{equation*}
\liminf_{j \rightarrow \infty} \frac{ \left | \left \{  1 \leq N \leq M_j : S_N( f, \alpha) \geq \psi( \log N ) \right \}  \right |}{M_j}  = 1.
\end{equation*}
This shows the claimed upper density $1$ in Theorem \ref{ThmUpperDensDiscFunctionsRefined} for the set $ \{N \in \N \:\ S_N( f, \alpha) \geq \psi( \log N )  \}$ in case of $ \sum_{i=1}^{\nu } A_i \neq  0$ or $ \sum_{i=1}^{ \nu} c_i ( x_{i} - x_{i-1}) \neq 0 $.

In order to prove the first part of Theorem \ref{ThmKoksRefined}, let $0 < r< 1$ be fixed. Choosing $\delta = \delta(r)$ sufficiently small such that $ \frac{| A_j^{ \delta} |}{M_j} \geq r$, we can deduce from \eqref{pf_ud_1}
that, for $N \in  A_j^{ \delta}$, we have
\[S_N(f,\alpha) \geq c'\delta a_{k_j} + O\left( \sum_{i=1}^{k_j -1} a_i  \right).\]
By choosing $C(r):= \frac{c'\delta}{2}$, the first statement of Theorem \ref{ThmKoksRefined} follows, since the sequence $(k_j)_{j \in \N}$ is chosen such that $a_{k_j}$ dominates $\sum_{i =1}^{k_j-1} a_i$.\\

\par{}
Now we prove the remaining parts of Theorem \ref{ThmUpperDensDiscFunctionsRefined}, where we need to show that if $ \sum_{i=1}^{\nu } A_i = \sum_{i=1}^{ \nu} c_i ( x_{i} - x_{i-1}) =0  $, then the set $ \left \{ N \in \N : S_N(f, \alpha) \geq \psi( \log N )  \right \}$ has positive upper density. Let $ c, c', d, \alpha_1, \beta_1, \gamma_1, \delta_1$ be as in Lemma \ref{lem_case_posud}. By Lemma \ref{LemInfiniteSets}, there are infinitely many $K$ such that

\begin{align*}
&  \left \{ K \in \N \Bigg |  K \equiv \alpha_1 \Mod{\beta_1} , q_{K-1}  \equiv  \gamma_1 \Mod{\delta_1}, \psi(K) < a_K < K^2, \sum_{i=1}^{K-1}a_i \leq 2  K \log K  \right \}.
\end{align*}

Denote by $(k_j)_{j \in \mathbb{N}}$ the increasing sequence of integers such that the above holds.
Now let $N \in \N$ with $ c'a_{k_j}q_{k_j} \leq N \leq ca_{k_j}q_{k_j}$ be arbitrary. Thus, we can write $N = b_{k_j-1}q_{k_j-1} + N'$ where $N' < q_{k_j-1}$ and  $c a_{k_j} \leq b_{k_j-1} \leq a_{k_j}$. 
Arguing as in \eqref{pf_ud_1}, we obtain by Lemma \ref{lem_case_posud}

\begin{equation}\label{pf_pos_ud}
S_N(f,\alpha) \geq b_{k_j-1}d +  O\left(\sum_{i=1}^{k_j-1} a_i\right).
\end{equation}

Let $M_j := \lfloor c a_{k_j }q_{k_j-1} \rfloor $ and $A_j :=  \left \{ 1 \leq N \leq N_j : c'  \leq \frac{b_{k_j-1}(N)}{a_{k_j}} \leq c  \right\}  $. We note that there exists $r_0 > 0$ such that $\frac{|A_j|}{M_j} \geq r_0 > 0 $ for all sufficiently large $j \in \N$. Similar to the first part of this proof, we get
\begin{align*}
\frac{ \left | \left \{  1 \leq N \leq M_j : S_N( f, \alpha) \geq \psi( \log N )  \right \}  \right | }{M_j} & \geq  \frac{ \left | \left \{  1 \leq N \leq M_j : b_{k_j-1} d +  O\left(\sum_{i=1}^{k_j-1} a_i\right) \geq \psi( \log N ) \right \}   \right |}{M_j} \\
& \geq  \frac{ \left | \left \{  N \in A_j : b_{k_j-1} d +  O\left(\sum_{i=1}^{k_j-1} a_i\right) \geq \psi( \log N ) \right \}   \right |}{M_j} \\
& =  \frac{|A_j | }{M_j} \geq  r_0. 
\end{align*}
By taking the liminf as $j \rightarrow \infty $, we obtain
\begin{equation*}
\liminf_{j \rightarrow \infty} \frac{ \left | 1 \leq N \leq M_j : S_N( f, \alpha) \geq \psi( \log N )  \right |}{M_j} \geq r_0  > 0.
\end{equation*}
This shows the claimed positive upper density in Theorem \ref{ThmUpperDensDiscFunctionsRefined} for the set $ \{N \in \N \:\ S_N( f, \alpha) \geq \psi( \log N )  \}$ in case of $ \sum_{i=1}^{\nu } A_i = \sum_{i=1}^{ \nu} c_i ( x_{i} - x_{i-1}) = 0 $. 

To prove the second statement of Theorem \ref{ThmKoksRefined}, we see that, for $N \in A_j$, by \eqref{pf_pos_ud}, we have 
$S_N(f,\alpha) \geq dc' a_{k_j} + O\left(\sum_{i=1}^{k_j-1} a_i\right)$. Since $a_{k_j}$ dominates $ \sum_{i=1}^{k_j-1} a_i$ by construction and $\frac{|A_j|}{M_j} \geq r_0 > 0 $, the statement follows immediately.

\section{Functions with logarithmic singularities}
\subsection{Heuristic of the proofs}

We will briefly line out the main ideas of the proof of Theorems \ref{logthm} and Theorem \ref{koks_sharp_log}.
Again, we are using that, for almost every $\alpha \in [0,1)$, $\sum_{i =1}^{K-1} a_i = o(a_K)$ for infinitely many $K \in \N$. Here, $K$ will always satisfy this property. For $q_{K-1} < N < q_K$ and $N = b_{K-1}q_k + N'$, $N' < q_{K-1}$, we get rid of $ S_{N'}(\ldots)$ by an application of the Denjoy--Koksma inequality with singularity \eqref{koksma_bound_sing}. We make sure to stay away from the singularity $x_1 = \frac{r}{s}$ by the fact that
if $\lVert N\alpha - \frac{r}{s} \rVert$ is small, then so is $\lVert sN\alpha \rVert$ (Proposition \ref{PropDistanceNalphaToq}). Thus, 
we can morally work with the homogeneous case of Diophantine approximation and the corresponding metric theory gives sufficient estimates.

Again, we analyze $S_{(b+1)q_{K-1}}(f,\alpha) - S_{bq_{K-1}}(f,\alpha)$ for every $b \leq b_{K-1}-1$ and observe that \[\left\{\{n\alpha + bq_{K-1}\alpha\}\right\}_{n =1}^{q_{K-1}}\approx
\left\{\frac{j + \frac{b_{K-1}}{a_K}}{q_{K-1}}\right\}_{j =1}^{q_{K-1}}.\] 

In the asymmetric case, we see that $f(x) = \log\{x\}$ is monotonically increasing on $[0,1)$.
Comparing $f\left(\frac{j + \frac{b_{K-1}}{a_K}}{q_{K-1}}\right)$ with $\int\limits_{j/q_{K-1}}^{(j+1)/q_K} f(x) \,\mathrm{d}x$, the value of $\frac{b_{K-1}}{a_K}$ is decisive and, for some $c,d > 0$ and $\frac{b}{a_K}  \in [0,c]$, this leads to an estimate (see Lemma \ref{LemMainTermEstRationalShift}) of the form
\[(-1)^K\left(S_{(b+1)q_{K-1}}(f,\alpha) - S_{bq_{K-1}}(f,\alpha)\right) \gtrapprox d \log q_K \gg
\log N.\] Then the proof can be concluded similarly to the proof of Theorem \ref{ThmUpperDensDiscFunctionsRefined}.

In the symmetric case $f(x) = \log \lVert x \rVert$, we see that, for $j \leq q_{K-1}/2$, we have
$f\left(1 - \frac{j -1 + \frac{b_{K-1}}{a_K}}{q_{K-1}}\right)
= f\left(\frac{j + \left(1 - \frac{b_{K-1}}{a_K}\right)}{q_{K-1}}\right)
$. So, the terms $f\left(\frac{j + \frac{b_{K-1}}{a_K}}{q_{K-1}}\right)-\int\limits_{j/q_{K-1}}^{(j+1)/q_K} f(x) \,\mathrm{d}x$ and $f\left(\frac{1- (j + \frac{b_{K-1}}{a_K})}{q_{K-1}}\right)- \int\limits_{1 - (j-1)/q_{K-1}}^{1 - j/q_{K-1}} f(x) \,\mathrm{d}x$ are of opposite sign and lead to some cancellation (Lemma \ref{final_symm_log_estim}). This cancellation is responsible for the different behaviour of symmetric and asymmetric singularities.

\subsection{Asymmetric logarithmic singularities}

\begin{proposition}\label{sum_vs_int_log}
\label{sum_vs_int}
    Let $x_j = \frac{j + \varepsilon_{j}}{q_{\ell}}, 0 \leq j \leq q_{\ell} -1$, for $0 <\varepsilon_{j}< 1$, where $ \ell \in \N $. Then, we have

    \[\sum_{j=0}^{q_{\ell}-1}\log(x_j) - q_{\ell}\int_{0}^{1} \log(x) \,\mathrm{d}x = \left(\sum_{j=1}^{q_{\ell} -1}\frac{\varepsilon_{j}-1/2}{j}\right) + \log(\varepsilon_{0}) + O(1)\]
    with the implied constant being absolute, independent of $\varepsilon_{j}$.
\end{proposition}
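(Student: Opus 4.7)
The plan is to compare the sum with the integral term-by-term by splitting off the $j=0$ contribution and using a Taylor expansion combined with Stirling's formula on the remaining indices. I would first separate the $j=0$ summand, writing $\log(x_0) = \log(\varepsilon_0) - \log q_{\ell}$, which already produces the $\log(\varepsilon_0)$ appearing on the right-hand side. For $j \geq 1$, I would write $\log(x_j) = \log(j + \varepsilon_j) - \log q_{\ell}$ and Taylor-expand
\[
\log(j+\varepsilon_j) \;=\; \log j + \log\!\left(1 + \tfrac{\varepsilon_j}{j}\right) \;=\; \log j + \frac{\varepsilon_j}{j} + O\!\left(\tfrac{1}{j^2}\right),
\]
with the implied constant absolute since $\varepsilon_j \in (0,1)$ (the $j=1$ summand is $O(1)$ trivially). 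Summing the error over $j \geq 1$ contributes only $O(1)$ as $\sum_{j \geq 1} j^{-2} < \infty$.

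Next, I would apply Stirling's formula in the sharp form
\[
\sum_{j=1}^{q_{\ell}-1} \log j \;=\; \log \Gamma(q_{\ell}) \;=\; q_{\ell}\log q_{\ell} - q_{\ell} - \tfrac{1}{2}\log q_{\ell} + O(1).
\]
Combining the pieces, the contributions of order $q_{\ell}\log q_{\ell}$ from the $q_{\ell}$ factors of $-\log q_{\ell}$ and from the leading Stirling term cancel, while the $-q_{\ell}$ from Stirling cancels against $q_{\ell}\int_0^1 \log(x)\,\mathrm{d}x = -q_{\ell}$. What remains is
\[
\sum_{j=0}^{q_{\ell}-1}\log(x_j) - q_{\ell}\int_{0}^{1}\log(x)\,\mathrm{d}x \;=\; \log(\varepsilon_{0}) + \sum_{j=1}^{q_{\ell}-1}\frac{\varepsilon_{j}}{j} - \tfrac{1}{2}\log q_{\ell} + O(1).
\]
Converting the residual half-logarithm via the harmonic asymptotic $\tfrac{1}{2}\log q_{\ell} = \tfrac{1}{2}H_{q_{\ell}-1} + O(1)$ and absorbing $-\tfrac{1}{2}H_{q_{\ell}-1}$ into the $\varepsilon_j$ sum converts the right-hand side into $\log(\varepsilon_{0}) + \sum_{j=1}^{q_{\ell}-1}\tfrac{\varepsilon_{j}-1/2}{j} + O(1)$, as claimed.

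The only slightly delicate point is retaining the $-\tfrac{1}{2}\log q_{\ell}$ term from Stirling's expansion: the cruder asymptotic $\log(n!) = n\log n - n + O(\log n)$ would absorb exactly the term needed to generate the ``$-1/2$'' shift in the numerator, so one genuinely requires the next-order Stirling term. All implied constants are manifestly absolute and independent of $(\varepsilon_j)$, since both the Taylor remainder bound and the application of Stirling are uniform in those parameters.
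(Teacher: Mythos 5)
Your proof is correct, and it takes a genuinely different route from the paper's. The paper compares each summand $\log(x_j)$ with the integral $q_\ell\int_{j/q_\ell}^{(j+1)/q_\ell}\log(x)\,\mathrm{d}x$ over the matching subinterval: for $j\ge 1$ a direct computation of that subinterval integral, followed by a Taylor expansion of $\log\bigl(\tfrac{j+\varepsilon_j}{j+1}\bigr)$ and $j\log(1+1/j)$, produces each term $\tfrac{\varepsilon_j-1/2}{j}+O(1/j^2)$ on the spot, and $j=0$ is handled separately. This \emph{local} comparison is entirely self-contained: it never invokes Stirling or the harmonic-sum asymptotic, because the $-1/2$ in the numerator emerges directly from the second-order Taylor coefficient of $j\log(1+1/j)$. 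Your \emph{global} approach instead decouples the sum from the integral, expands $\log(j+\varepsilon_j)=\log j + \varepsilon_j/j + O(1/j^2)$, and then needs two classical asymptotics to close the gap: the sharp Stirling formula $\log\Gamma(q_\ell)=(q_\ell-\tfrac12)\log q_\ell - q_\ell + O(1)$, and $\tfrac12\log q_\ell=\tfrac12 H_{q_\ell-1}+O(1)$ to fold the leftover half-logarithm back into the $j$-sum as the $-1/2$ shift. You correctly identify the delicate point---that the $-\tfrac12\log q_\ell$ term of Stirling is essential and would be lost under the crude $n\log n - n + O(\log n)$ asymptotic. What the paper's local route buys is that this $-1/2$ is produced automatically term by term and no external asymptotic is needed; what your route buys is a conceptually transparent explanation of where the $-1/2$ ``really comes from,'' namely the second Stirling coefficient. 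Both proofs yield the claimed absolute implied constant, uniformly in the $\varepsilon_j$.
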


\begin{proof}
    For $j \geq 1$, we have
    \[q_{\ell}\int_{j/q_{\ell}}^{(j+1)/q_{\ell}} \log(x) \,\mathrm{d}x
    = (j+1)\log\left(\frac{j+1}{q_{\ell}}\right) - (j+1) - j\log\left(\frac{j}{\ql}\right) + j
    = \log\left(\frac{j+1}{q_{\ell}}\right) + j\log\left(1 + \frac{1}{j}\right) - 1.
    \]

So, we obtain
\[
\log(x_j) - q_{\ell}\int_{j/q_{\ell}}^{(j+1)/q_{\ell}} \log(x) \,\mathrm{d}x =\log \left(\frac{j + \varepsilon_{j}}{j +1}\right) - j\log\left(1 + \frac{1}{j}\right) + 1.\]

By the Taylor expansion $\log(1 + x) = x - x^2/2 + O(x^3)$, we get

\[\log \left(\frac{j + \varepsilon_{j}}{j +1}\right) - j\log\left(1 + \frac{1}{j}\right) + 1
= \frac{\varepsilon_{j}-1/2}{j} + O\left(\frac{1}{j^2}\right).
\]
 For $j = 0$, we get
$q_{\ell}\int_{0}^{1/q_{\ell}} \log(x) \,\mathrm{d}x = - \log(\ql) - 1$
and thus, 
\[\log(x_0) - q_{\ell}\int_{0}^{1/q_{\ell}} \log(x) \,\mathrm{d}x = \log(\varepsilon_{0}) +1.\]

Combining the obtained estimates yields
 \[\sum_{j=0}^{q_{\ell}-1}\log(x_j) - q_{\ell}\int_{0}^{1} \log(x) \,\mathrm{d}x = \left(\sum_{j=1}^{q_{\ell} -1}\frac{\varepsilon_{j}-1/2}{j}\right) + \log(\varepsilon_{0}) + O(1).\]
\end{proof}

\begin{proposition}
\label{PropDistanceNalphaToq}
    Let $\frac{r}{s} \in [0,1) \cap \Q $ and let $1 < N < \frac{q_K}{s}$. Then,
    \[\left\lVert N\alpha - \frac{r}{s}\right\rVert > \frac{\lVert q_{K+1}\alpha\rVert}{s}.\]
\end{proposition}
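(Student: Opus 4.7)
The plan is to reduce the inhomogeneous Diophantine approximation $\|N\alpha - r/s\|$ to the homogeneous quantity $\|sN\alpha\|$ and then invoke the best-approximation property of continued fraction convergents.

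First I would use the elementary estimate $\|sx\| \leq s\|x\|$, which holds for every $x \in \R$ and every positive integer $s$ (write $x = p + \varepsilon$ with $p \in \Z$ and $|\varepsilon| = \|x\|$; then either $s|\varepsilon| \leq 1/2$ and $\|sx\| = s|\varepsilon|$, or $s|\varepsilon| > 1/2 \geq \|sx\|$). Applying this to $x := N\alpha - r/s$ and using $r \in \Z$ gives
\[
\|sN\alpha\| \;=\; \|sN\alpha - r\| \;=\; \|sx\| \;\leq\; s\,\bigl\|N\alpha - r/s\bigr\|.
\]
Hence it suffices to bound $\|sN\alpha\|$ from below by $\|q_{K+1}\alpha\|$.

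Next I would invoke the classical best-approximation property: for every positive integer $m$ with $1 \leq m < q_{K+1}$ one has $\|m\alpha\| \geq \|q_K\alpha\|$. The hypothesis $1 < N < q_K/s$ guarantees that $sN$ is a positive integer and $sN < q_K < q_{K+1}$, so this property applies with $m = sN$ and yields $\|sN\alpha\| \geq \|q_K\alpha\|$. Combined with the strict monotonicity $\|q_K\alpha\| > \|q_{K+1}\alpha\|$ (which is a direct consequence of the recursion $\delta_{K-1} = a_{K+1}\delta_K + \delta_{K+1}$ together with $a_{K+1} \geq 1$ and $\delta_{K+1} > 0$), one gets
\[
s\,\bigl\|N\alpha - r/s\bigr\| \;\geq\; \|sN\alpha\| \;\geq\; \|q_K\alpha\| \;>\; \|q_{K+1}\alpha\|,
\]
and dividing by $s$ produces exactly the strict inequality claimed.

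There is no genuine obstacle in this proof; the argument is essentially a one-line observation once the ``multiply by $s$'' trick is in hand. The only points that require a small verification are the contraction inequality $\|sx\| \leq s\|x\|$ and the strict decrease of $\|q_K\alpha\|$, both of which are standard. This is precisely why the proposition is stated immediately after the heuristic remark that shifts by a rational $r/s$ reduce, up to a harmless factor $s$, to the homogeneous theory of Diophantine approximation of $\alpha$ by rationals.
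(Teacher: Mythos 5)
Your proof is correct and takes essentially the same approach as the paper: multiply by $s$ to reduce the inhomogeneous quantity $\lVert N\alpha - r/s\rVert$ to the homogeneous $\lVert sN\alpha\rVert$, then invoke the best-approximation property of the convergent denominators. The only difference is cosmetic (you argue directly while the paper argues by contradiction); note a small index slip in your justification of $\lVert q_K\alpha\rVert > \lVert q_{K+1}\alpha\rVert$, where the relevant recursion is $\delta_K = a_{K+2}\delta_{K+1} + \delta_{K+2}$ rather than the one at index $K-1$, but this does not affect the argument.
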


\begin{proof}
Assume to the contrary that $\left\lVert N\alpha - \frac{r}{s}\right\rVert \leq \frac{\lVert q_{K+1}\alpha\rVert}{s}$.
Then, we have

\[\lVert sN\alpha\rVert = \left\lVert sN\alpha - s \frac{r}{s}\right\rVert = 
\left\lVert s\left(N\alpha - \frac{r}{s}\right)\right\rVert \leq s \left\lVert N\alpha - \frac{r}{s}\right\rVert
\leq \lVert q_{K+1}\alpha \rVert.\]

Since $sN < q_{K+1}$, this is a contradiction to the best approximation property of $q_{K+1}$: There would exist an integer $N' < q_{K+1}$ such that $q_{K-1} \lVert N'\alpha \rVert \leq \lVert q_{K+1}\alpha \rVert$.
\end{proof}

\begin{proposition}(Error term estimate for a rational shift)
\par{}
    \label{PropErrorEstimateLogSingularityRationalShift}
    Let $f(x)= \log( \{ x - q \} )$, where $q = \frac{r}{s} \in [ 0,1)$ is a rational number. Let $N \in \N$ with $ N  < \frac{q_K}{s}$ and let $N= b_{K-1} q_{K-1} + N'$, where $0 \leq N' < q_{K-1}$. Then, we have
    \begin{equation*}
    \left | S_{N'}( f, \alpha , b_{K-1}q_{K-1} \alpha ) \right | \ll \log( q_{K+1} ) \sum_{\ell=1}^{K-1} a_{\ell}.
    \end{equation*}
\end{proposition}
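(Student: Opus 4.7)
The strategy is to invoke the Denjoy--Koksma inequality with singularity, Proposition \ref{denj_koks_log}, applied to $f(x) = \log\{x - q\}$, whose unique singularity modulo $1$ sits at $x_1 = q = r/s$. To set this up, I need to exhibit a small interval around $x_1$ avoided by the shifted orbit $\{n\alpha + b_{K-1}q_{K-1}\alpha : 1 \le n \le N'\}$. The orbit indices $M = n + b_{K-1}q_{K-1}$ all satisfy $M \le b_{K-1}q_{K-1} + N' = N < q_K/s$, so Proposition \ref{PropDistanceNalphaToq} yields the uniform lower bound $\|M\alpha - r/s\| > \|q_{K+1}\alpha\|/s$. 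Setting $d := \|q_{K+1}\alpha\|/s$ and $A := (x_1 - d, x_1 + d) \pmod{1}$, the shifted orbit lies in $[0,1) \setminus A$.

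Proposition \ref{denj_koks_log} then delivers
$$\bigl| S_{N'}(f, \alpha, b_{K-1}q_{K-1}\alpha) \bigr| \ll \sup_{x \in [0,1) \setminus A} |f(x)| \cdot \sum_{i=0}^{K'-1} b_i(N') \;+\; N' \Bigl|\int_A f(x)\,\mathrm{d}x\Bigr|,$$
where $N' = \sum_{i=0}^{K'-1} b_i(N') q_i$ is the Ostrowski expansion of $N'$. Since $N' < q_{K-1}$, one has $K' \le K-1$ and $b_i(N') \le a_{i+1}$, so $\sum_{i=0}^{K'-1} b_i(N') \le \sum_{\ell=1}^{K-1} a_\ell$. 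On $[0,1) \setminus A$ the quantity $\{x-q\}$ stays away from $0$ by at least $d$, hence $|\log\{x-q\}| \le |\log d|$; the continued-fraction inequality $\|q_{K+1}\alpha\| \ge 1/(q_{K+2} + q_{K+1})$ then gives $|\log d| \ll \log q_{K+1}$ (with $s$-dependent and sub-leading contributions absorbed into the implicit constant). For the integral, the logarithmic nature of the singularity yields $\int_A |f|\,\mathrm{d}x = O(d |\log d|)$, and since $q_{K-1} d \le q_{K-1}/q_{K+1} \le 1$, the error term is $O(\log q_{K+1})$ and is absorbed into the leading product.

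Assembling the three factors produces the claimed bound $\ll \log q_{K+1} \sum_{\ell=1}^{K-1} a_\ell$. The main technical subtlety lies in the supremum step: the distance bound $\|q_{K+1}\alpha\|/s$ only guarantees $1/d \le s(q_{K+2} + q_{K+1})$, which literally gives $|\log d| = O(\log q_{K+2})$; matching the stated $\log q_{K+1}$ requires either an implicit assumption that $\log a_{K+2}$ is harmless in downstream applications (which holds for the indices used in Theorem \ref{logthm}) or treating $\log q_{K+1}$ as a mild abuse of notation. Everything else is routine: the Ostrowski digit count and the size of the integral over the excluded interval are standard continued-fraction estimates, and the core mechanism is simply the orbit-avoidance input of Proposition \ref{PropDistanceNalphaToq} feeding into the truncated Denjoy--Koksma inequality.
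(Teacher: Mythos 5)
Your proof is essentially the paper's own argument: apply Proposition~\ref{PropDistanceNalphaToq} to exhibit a singularity-avoiding interval around $x_1 = r/s$, then feed that into the truncated Denjoy--Koksma inequality (Proposition~\ref{denj_koks_log}), and bound the Ostrowski digit sum of $N' < q_{K-1}$ by $\sum_{\ell=1}^{K-1} a_\ell$. The subtlety you flag at the end is genuine but resolvable, and worth spelling out. Proposition~\ref{PropDistanceNalphaToq} as \emph{stated} only gives radius $\lVert q_{K+1}\alpha\rVert/s$, which literally produces $\log q_{K+2}$. However, its proof in fact establishes a slightly stronger bound: since $sM < q_K < q_{K+1}$ for every orbit index $M = n + b_{K-1}q_{K-1} \le N$, the best-approximation property yields $\lVert sM\alpha\rVert \ge \lVert q_K\alpha\rVert$, hence $\lVert M\alpha - r/s\rVert \ge \lVert q_K\alpha\rVert/s$. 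That is exactly the interval radius $\lVert q_K\alpha\rVert/s$ the paper uses when defining $A_{N'}$, and it gives $\sup_{x\notin A_{N'}} |f(x)| \ll \log(s/\lVert q_K\alpha\rVert) \ll \log q_{K+1}$ cleanly. So the paper implicitly invokes the sharper version of Proposition~\ref{PropDistanceNalphaToq} that its own proof delivers, rather than the weaker version recorded in the statement; you correctly identified that the weaker version already suffices for every downstream use, but the cleaner route is to note the strengthening.
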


\begin{proof}

Define $A_{N'} := \left ( q -  \frac{ \| q_K \alpha \| }{s}, q +  \frac{\| q_K \alpha \|}{s}\right )$. Using Proposition \ref{PropDistanceNalphaToq} we see $ n \alpha + b_{K-1}q_{K-1} \alpha \notin A_{N'}$ for all $n \leq N'$. Thus, the Denjoy--Koksma inequality with singularity (Proposition \ref{denj_koks_log}) yields
\begin{equation*}
\left | S_{N'}( f, \alpha , b_{K-1}q_{K-1} \alpha ) \right | \ll \sup_{x \in [0,1)\setminus A_{N'}} \lvert f(x)\rvert  \sum_{i =1}^{K-1}a_i+ q_{K-1} \left\lvert\int_{A_{N'}} f(x) \,\mathrm{d}x\right\rvert.
\end{equation*}
We have $\sup_{x \in [0,1)\setminus A_{N'}} \lvert f(x)\rvert = \log \frac{\| q_K \alpha \| }{s} \ll \log q_{K+1}$. Further, we use the estimate $q_{K-1} \left \lvert\int_{A_{N'}} f(x) \,\mathrm{d}x\right\rvert \ll \log (q_{K+1})$ to obtain the desired result.
\end{proof}

\begin{lem}(Main term estimate for a rational shift)
\label{LemMainTermEstRationalShift}
\par{}
Let $\delta > 0$, $q = \frac{r}{s} \in [ 0,1) \cap \Q $ and define $f(x) := \log( \{ x - q\} )$ for $x \in [0,1)$. Further, let $K \in \N$ with $ q_{K-1} \equiv 0 \Mod{s}$ and choose $N \in \N$ with $q_{K-1} < N < q_{K}$ and $\delta a_K < b_{K-1}(N) < \frac{a_K}{4}$. Then, if $a_K $ is sufficiently large, we get
\begin{equation*}
(-1)^K S_{b_{K-1}q_{K-1}}(f, \alpha ) \gg \delta  a_K \log(q_{K-1}).
\end{equation*} 
\end{lem}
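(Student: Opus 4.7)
The plan is to split $S_{b_{K-1}q_{K-1}}(f,\alpha)$ into $b := b_{K-1}$ blocks of length $q_{K-1}$, exploit $s \mid q_{K-1}$ to show the orbit traces a near-equispaced grid on $[0,1)$, and apply Proposition \ref{sum_vs_int_log} to each block. Using $\int_0^1 \log\{x-q\}\,\mathrm{d}x = -1$, I write
\[
S_{bq_{K-1}}(f,\alpha) \;=\; \sum_{u=0}^{b-1}\!\left(\sum_{n=1}^{q_{K-1}}\log\{n\alpha + uq_{K-1}\alpha - q\} \;+\; q_{K-1}\right).
\]

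Setting $\delta_{K-1} := q_{K-1}\alpha - p_{K-1}$, so $|\delta_{K-1}| \leq 1/q_K$ and $\mathrm{sgn}(\delta_{K-1}) = (-1)^{K+1}$, the identity $n\alpha = np_{K-1}/q_{K-1} + n\delta_{K-1}/q_{K-1}$ together with $uq_{K-1}\alpha \equiv u\delta_{K-1} \pmod 1$ and the fact that $s \mid q_{K-1}$ makes $rq_{K-1}/s \in \mathbb{Z}$ show that, for odd $K$,
\[
\{n\alpha + uq_{K-1}\alpha - q\} \;=\; \frac{j + \tau_{j,u}}{q_{K-1}}, \qquad \tau_{j,u} := \bigl(\tilde\pi^{-1}(j) + uq_{K-1}\bigr)\delta_{K-1},
\]
where $j = \tilde\pi(n)$ is a suitable permutation of $\{0,\ldots,q_{K-1}-1\}$ induced by $n \mapsto (np_{K-1} - rq_{K-1}/s) \bmod q_{K-1}$. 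The upper bound $b < a_K/4$ combined with $q_{K-1}|\delta_{K-1}| \asymp 1/a_K$ gives $\tau_{j,u} \in (0,1)$, so no fractional-part wraparound occurs. For even $K$ the points lie just below $j/q_{K-1}$ and are re-indexed $j \mapsto j - 1 \pmod{q_{K-1}}$ before applying Proposition \ref{sum_vs_int_log}.

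Applying Proposition \ref{sum_vs_int_log} block-by-block and summing over $u$ yields, for odd $K$,
\[
S_{bq_{K-1}}(f,\alpha) \;=\; \sum_{u=0}^{b-1}\sum_{j=1}^{q_{K-1}-1}\frac{\tau_{j,u} - 1/2}{j} \;+\; \sum_{u=0}^{b-1}\log\tau_{0,u} \;+\; O(b).
\]
Splitting $\tau_{j,u} = \tilde\pi^{-1}(j)\delta_{K-1} + uq_{K-1}\delta_{K-1}$, the fluctuation piece $\sum_{u,j}\tilde\pi^{-1}(j)\delta_{K-1}/j$ is $O(b\log q_{K-1}/a_K) = O(\log q_{K-1})$, while the bulk equals $\bigl(\log q_{K-1}+O(1)\bigr)\sum_{u=0}^{b-1}\bigl(uq_{K-1}\delta_{K-1} - \tfrac{1}{2}\bigr) = \bigl(\log q_{K-1}+O(1)\bigr)\bigl(\frac{b^2}{2a_K}(1+o(1)) - \frac{b}{2}\bigr)$. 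For $\delta a_K < b < a_K/4$ the inner parenthesis $\frac{b}{2}\bigl(\frac{b}{a_K}-1\bigr)$ is bounded above by $-3b/8 \leq -(3\delta/8)a_K$, so the bulk is $\leq -(3\delta/8)a_K\log q_{K-1}$. Since each $\tau_{0,u} \in (0,1)$, the singular sum $\sum_u \log\tau_{0,u}$ is non-positive and only reinforces the bulk; combined with the $O(b) + O(\log q_{K-1})$ errors this gives $-S_{bq_{K-1}}(f,\alpha) \gg \delta a_K\log q_{K-1}$ for $a_K$ (and $K$) sufficiently large.

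The even-$K$ case is analogous via the re-indexed grid: the bulk flips sign and becomes $\geq (3\delta/8)\,a_K\log q_{K-1} > 0$, and crucially no singular term appears, because the orbit point closest to $q$ now satisfies $\{n\alpha - q\} \approx 1$, where $\log\{\cdot\}$ is bounded. The main technical obstacle is the careful bookkeeping of the permutation $\tilde\pi$ together with the parity-dependent re-indexing, and ensuring that the singular contribution $\log\tau_{0,u}$ always has the sign that reinforces (rather than cancels) the bulk — this is precisely where the one-sidedness of the singularity of $\log\{x - q\}$ (in contrast to $\log\|x - q\|$) is decisive, and it underlies the dichotomy between parts (i) and (ii) of Theorem \ref{logthm}.
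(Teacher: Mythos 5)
Your argument is correct and follows essentially the same route as the paper: decompose into $b_{K-1}$ blocks of length $q_{K-1}$, use $s\mid q_{K-1}$ so that the shifted orbit of one block sits on an exact $q_{K-1}$-grid up to a uniformly small perturbation of one sign, and apply Proposition \ref{sum_vs_int_log} block by block. The one place you deviate is in how you bound $\sum_{j}\frac{\tau_{j,u}-1/2}{j}$: the paper simply observes $\tau_{j,u}<1/4$ uniformly (from $b<a_K/4$ and $q_{K-1}\delta_{K-1}\le 1/a_K$), giving $\tau_{j,u}-1/2<-1/4$ and hence $\le -\tfrac14\log q_{K-1}$ per block, whereas you perform a bulk/fluctuation split around the $u$-dependent mean $uq_{K-1}\delta_{K-1}$. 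Both give the same conclusion; the paper's uniform bound is a touch simpler, while your split tracks the quadratic-in-$b$ structure already visible in Lemma \ref{LemAsymptoticsSN}. Your explicit remark that in the even-$K$ case the re-indexed $\varepsilon_0$ lies near $1$ (so $\log\varepsilon_0=O(1)$ and the singular term cannot spoil the positive bulk) is correct and precisely matches the paper's choice $\varepsilon_n=1-\varepsilon_{b,\cdot}$; it also cleanly isolates why the one-sided singularity of $\log\{x-q\}$ behaves differently from $\log\|x-q\|$.
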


\begin{proof}
By definition, we have

\begin{align*}
S_{b_{K-1}q_{K-1}}(f, \alpha) & = \sum_{n=1}^{b_{K-1} q_{K-1}} \log \left( \left \{ n \alpha - q \right \} \right) + b_{K-1} q_{K-1} \\
&= \sum_{b = 0}^{b_{K-1}-1} \left(  \sum_{n=1}^{q_{K-1}} \log \left( \left \{ n \alpha + b q_{K-1} \alpha - q \right \} \right) + q_{K-1} \right)\\
& = \sum_{b = 0}^{b_{K-1}-1} \left(  \sum_{n=1}^{q_{K-1}} \log \left( \left \{ n \alpha + (-1)^{K-1}b \delta_{K-1} \alpha - q \right \} \right) + q_{K-1} \right),
\end{align*}
where we recall that $ \delta_{K-1} = | \alpha q_{K-1} - p_{K-1}| = (-1)^{K-1} ( \alpha q_{K-1} - p_{K-1})$. For $1 \leq n \leq q_{K-1}$ and $0 \leq b \leq b_{K-1}-1$, we get

\begin{align*}
\left \{n \alpha - q +(-1)^{K-1} \left (b\delta_{K-1} \right ) \right \} & = 
\left \{  n \frac{p_{K-1}}{q_{K-1}} - q  + \frac{(-1)^{K-1}}{q_{K-1}} \delta_{K-1} \left(b q_{K-1} + n  \right)\right\} \\
& = 
\left\{\frac{np_{K-1} - q q_{K-1}}{q_{K-1}} + \frac{(-1)^{K-1}}{q_{K-1}} \left(\underbrace{\delta_{K-1} (bq_{K-1} +n )}_{=:\varepsilon_{b,n}} \right)\right\} \\
& = \left\{\frac{np_{K-1}-n'}{q_{K-1}} + \frac{(-1)^{K-1}}{q_{K-1}} \varepsilon_{b,n}\right\} .
\end{align*}
We introduced $n' :=  q q_{K-1} $ which is an integer since $s | q_{K-1}$ with $0 \leq n' \leq q_{K-1} -1 $. Now observe that $\delta_{K-1} > 0$ by definition and since $b_{q_{K-1}} < \frac{a_K}{4}$, we have $bq_{K-1} +n \leq b_{q_{K-1}}q_{K-1} \leq (1/2 - \delta)a_Kq_{K-1}$. Thus, for any $ 0 \leq n \leq q_{K-1} -1$, we get 
\[0 < \varepsilon_{b,n} < \frac{a_k}{4}q_{K-1}\delta_{K-1} \leq 1/4,\]
where we used that $q_{K-1} \delta_{K-1} \leq 1/a_K$. First, we assume that $K$ is odd implying $(-1)^{K-1} = 1$. 
We apply Proposition \ref{sum_vs_int} with $x_n = \left\{\frac{n}{q_{K-1}} + \frac{\varepsilon_{n}}{q_{K-1}}  \right\}$, where $\varepsilon_{n} = \varepsilon_{b,((np_{K-1}^{-1}+n') \Mod{q_{K-1}} )}$ for $1 \leq n \leq q_{K-1}-1$ and $\varepsilon_{0} = \varepsilon_{b,(1-q)q_{K-1}}$. This leads to

\begin{align*}
\sum_{n=1}^{q_{K-1}} \log \left( \left \{ n \alpha + (-1)^{K-1}b \delta_{K-1} \alpha - q \right \} \right) + q_{K-1} & = 
\sum_{n=1}^{q_{K-1}} \log \left( \left \{ n \alpha + (-1)^{K-1}b \delta_{K-1} \alpha - q \right \} \right) - q_{K-1} \int_{0}^1 \log(x)dx \\
& = \left(\sum_{n=1}^{q_{K-1} -1}\frac{\varepsilon_{n}-1/2}{n}\right) + \log(\varepsilon_{0}) + O(1) \\
& \leq -\frac{1}{4}\log(q_{K-1}) + \log(\varepsilon_{0}) + O(1) \\
& \leq -\frac{1}{8}\log(q_{K-1}).
\end{align*}

We used that $ \varepsilon_0 \leq \frac{1}{4} < 1$ and hence $\log( \varepsilon_{0}) \leq 0$. Moreover, we applied the rough estimate $ O(1) \leq \frac{1}{8} \log (q_{K-1} )$. By summing over all $b =  0, \ldots, b_{K-1} -1 $, we obtain

\[ S_{b_{K-1}q_{K-1}}(f, \alpha)  \leq - \frac{1}{8}  b_{K-1} \log(q_{K-1}) \leq - \frac{\delta}{8} a_K \log(q_{K-1}) \ll - \delta  a_K \log( q_{K-1} ), \]

where we also used the assumption $ b_{K-1} \geq \delta a_K$. By rewriting, we finally get
\begin{equation*}
 - S_{b_{K-1}q_{K-1}}(f, \alpha)  \gg \delta a_K \log( q_{K-1} ),
\end{equation*}
as claimed.
\par{}
Now let $K$ be even. We apply Proposition \ref{sum_vs_int} with $x_n = \left\{\frac{n}{q_{K-1}} + \frac{\varepsilon_{n}}{q_{K-1}}  \right\}$, where $\varepsilon_{n} = 1 - \varepsilon_{b,(n p_{K-1}^{-1}+n' + 1) \Mod{q_{K-1}}}$ for $1 \leq n \leq q_{K-1}-1$ and $ \varepsilon_{0} = 1 - \varepsilon_{b,q q_{K-1} }$. Similar to before, we obtain

\begin{align*}
 S_{b_{K-1}q_{K-1}}(f, \alpha) & = S_{b_{K-1}q_{K-1}}(f, \alpha) \\
& \geq \frac{1}{8} b_{K-1} \log(q_{K-1}) \\
& \gg  \delta  a_K \log( q_{K-1} ).
\end{align*}
This finishes the proof.
\end{proof}

\subsection{Symmetric logarithmic singularities}

\begin{lem}\label{final_symm_log_estim}
Let $f(x) = \log \left \lVert x - \frac{r}{s} \right \rVert$, where $\frac{r}{s} \in [0,1) \cap \Q$. Then, for almost every $\alpha \in [0,1)$,
we have
\[\lvert S_N(f,\alpha) \rvert \ll (\log N )^2 \log \log N . \]
\end{lem}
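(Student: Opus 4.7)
The plan is to combine the Ostrowski expansion of $N$ with the reflection symmetry $\log \|y\| = \log \|1-y\|$ of the singularity, following the heuristic outlined above. I would write $N = \sum_{\ell = 0}^{K-1} b_\ell q_\ell$ in its Ostrowski representation (so $K = K(N) \asymp \log N$ almost surely by \eqref{Eq_size_of_q_k}) and decompose
\[
S_N(f,\alpha) = \sum_{\ell=0}^{K-1} \sum_{b=0}^{b_\ell-1} \Phi_\ell^{(b)}, \qquad \Phi_\ell^{(b)} := \sum_{n=1}^{q_\ell} f\!\left(n\alpha + c_{\ell,b}\right),
\]
where $c_{\ell,b}$ is the cumulative shift from earlier blocks. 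The task then reduces to showing $|\Phi_\ell^{(b)}| \ll \log q_\ell \asymp \ell$ uniformly in $b$ and then summing.

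For the per-block estimate I would reuse the parametrization from the proof of Lemma~\ref{LemMainTermEstRationalShift}. Assuming $s \mid q_\ell$ (which can be arranged on a dense set of $\ell$ by Lemma~\ref{LemInfiniteSets}; the complementary levels contribute only bounded error per block and are handled separately), the bijection $n \leftrightarrow j(n) := np_\ell \pmod{q_\ell}$ rewrites $\Phi_\ell^{(b)} = \sum_{j=0}^{q_\ell - 1} \log\!\left\|(j+\varepsilon_j)/q_\ell\right\|$ with $\varepsilon_j = \delta_\ell(n(j)/q_\ell + b) \in (0,1)$. Splitting the $j$-sum at $q_\ell/2$ (with $\log\|y\| = \log y$ on the lower half and $\log\|y\| = \log(1-y)$ on the upper half) and applying the sum-vs-integral expansion of Proposition~\ref{sum_vs_int_log} to each half via the substitution $j' = q_\ell - 1 - j$, $\tilde\varepsilon_{j'} := 1 - \varepsilon_{q_\ell - 1 - j'}$, the constant $-1/(2j)$ parts of the corrections from the two halves cancel across the natural pairing, leaving
\[
\Phi_\ell^{(b)} - q_\ell \int_0^1 \log\|x\|\,dx = \sum_{j=1}^{\lfloor q_\ell/2 \rfloor} \frac{\varepsilon_j - \varepsilon_{q_\ell - 1 - j}}{j} + \log\varepsilon_0 + \log\bigl(1-\varepsilon_{q_\ell - 1}\bigr) + O(1).
\]
The first sum is $O(1)$ because $|\varepsilon_j - \varepsilon_{q_\ell - 1 - j}| \leq \delta_\ell = O(q_{\ell+1}^{-1})$ and $\sum_{j \leq q_\ell/2} 1/j = O(\log q_\ell)$; the boundary terms are $O(\log q_{\ell+1}) = O(\ell)$, since $\varepsilon_{q_\ell - 1} \leq \delta_\ell (b_\ell + 1) \ll 1/q_\ell$ keeps $\log(1 - \varepsilon_{q_\ell - 1})$ bounded. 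This gives $|\Phi_\ell^{(b)}| \ll \log q_\ell \ll \ell$.

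Summing over all blocks and using $b_\ell \leq a_{\ell+1}$ together with the trimmed-sum formula \eqref{Eq_trimmed_sum} and the Borel--Bernstein theorem \eqref{Eqbernstein} (yielding $\sum_{\ell \leq K} a_\ell \ll K \log K \log\log K$ almost surely) gives
\[
|S_N(f,\alpha)| \ll \sum_{\ell=0}^{K-1} b_\ell \cdot \ell \leq K \sum_{\ell=1}^K a_\ell \ll K^2 \log K \log\log K,
\]
which matches $(\log N)^2 \log\log N$ up to a $\log\log\log N$ factor that can be absorbed by a sharper bookkeeping isolating the single exceptional Ostrowski level (where the per-block bound is tightened via the symmetric cancellation). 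The main obstacle I anticipate is the per-block cancellation: one has to verify rigorously that the $-1/2$ contributions in the sum-vs-integral expansions of the two halves truly cancel across the pairing $j \leftrightarrow q_\ell - 1 - j$, and that the boundary terms $\log \varepsilon_0$ and $\log(1-\varepsilon_{q_\ell - 1})$---a priori potentially of size $\log q_{\ell+1}$---really admit the claimed $O(\ell)$ control when one tracks the explicit form $\varepsilon_j = \delta_\ell(n(j)/q_\ell + b)$. A secondary technical point is the treatment of levels $\ell$ where $s \nmid q_\ell$, where the clean bijection breaks down and a direct error estimate must be supplied.
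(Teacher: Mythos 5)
There are two serious flaws in the assembly of the per-block estimates, and together they leave a gap of roughly a factor $\log N / \log\log N$ that cannot be closed by ``sharper bookkeeping.''

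First, your per-block bound $|\Phi_\ell^{(b)}| \ll \log q_\ell \asymp \ell$ is not correct for the blocks you actually need. Your parametrization $\varepsilon_j = \delta_\ell(n(j)/q_\ell + b)$ omits the residual shift coming from the other Ostrowski levels: writing $c_{\ell,b}-\frac{r}{s} = \frac{m}{q_\ell} + \frac{r'}{q_\ell}$ with $0\le r'<1$, the quantity $r'$ enters the grid offset, so $\varepsilon_0$ can be as small as $q_\ell\,\min_{n\leq N}\|n\alpha - r/s\|$, which by Proposition~\ref{PropDistanceNalphaToq} is $\gtrsim q_\ell/q_{K+s+2}$, giving a boundary term of order $\log q_{K}$, i.e.\ $O(K)$ rather than $O(\ell)$. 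The paper deals with exactly this: in the $K_0$-block analysis it writes $N'\alpha - \frac{r}{s} = \frac{m+r'}{q_{K_0-1}}$, and it must further observe that at most one shift $b'$ places a grid point within $1/(4q_{K_0})$ of the singularity (by the best-approximation property of $q_{K_0-1}$); only that single $b'$ incurs a boundary cost of size $K$, while every other $b$ incurs only $O(\log a_{K_0})$. Without this ``at most one bad $b$'' step you cannot avoid $a_{K_0}\cdot K \gg K^3$ from the boundary terms alone.

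Second, your almost-sure bound $\sum_{\ell \leq K} a_\ell \ll K\log K\log\log K$ is false. The trimmed-sum result \eqref{Eq_trimmed_sum} controls $\sum a_\ell - \max a_\ell \sim \frac{K\log K}{\log 2}$, but the maximum can be of size $K^2$ (by Borel--Bernstein with $\psi(k)=k^2$ the bound $a_K<K^2$ holds eventually a.s., and $a_K> K^2/\log^2 K$ infinitely often a.s.), so the correct uniform a.s.\ bound is only $\sum_{\ell\le K}a_\ell = O(K^2)$. Combined with even the optimistic per-block bound $\ll K$, the naive sum over all blocks gives $K\cdot\sum a_\ell \ll K^3 \asymp (\log N)^3$, which is too weak. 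This is precisely why the paper does not treat all Ostrowski levels symmetrically: it isolates $K_0 := \argmax_{\ell\le K}a_\ell$, applies the Denjoy--Koksma inequality with singularity (Proposition~\ref{denj_koks_log}) to the remaining levels so that only $\sum_{i\ne K_0}a_i \ll K\log K$ enters, and then performs the cancellation argument (together with the unique-bad-$b$ observation) only on the $K_0$-block. That decomposition, not a $\log\log\log$ absorption, is what brings the estimate down to $K^2\log K \ll (\log N)^2\log\log N$.

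The heuristic you identify -- the pairing $j \leftrightarrow q_\ell-1-j$ and the cancellation of the $-1/(2j)$ terms -- is the right mechanism and is what the paper uses, and your estimate that $\sum_j |\varepsilon_j - \varepsilon_{q_\ell-1-j}|/j \ll \delta_\ell\log q_\ell$ is sound. But to turn this into a proof you must (i) track the residual shift $r'$ explicitly and use Proposition~\ref{PropDistanceNalphaToq} to control boundary terms by $O(K)$, (ii) show at most one $b$ hits the $K$-order boundary cost, and (iii) separate the level $K_0$ from the rest so the sum $\sum_{i\ne K_0}a_i \ll K\log K$ can be used instead of $\sum a_i \ll K^2$. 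You flag all three as ``anticipated obstacles'' at the end, but they are the core of the argument rather than technicalities.
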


\begin{proof}
Writing $N = \sum_{\ell=0}^{K-1} b_{\ell}q_{\ell}$ in its Ostrowski expansion with $b_{K-1} \neq 0$, 
we obtain the decomposition 

\[S_N(f,\alpha) = S_{N'}( f , \alpha) +  S_{b_{K_0-1} q_{K_0-1}}(f, \alpha, N' \alpha ) + S_{N''}(f, \alpha, (N' + b_{K_0 -1}q_{K_0 -1}) \alpha),
\]

where $K_0 = \argmax_{\ell = 1, \ldots, K}  a_{ \ell} $, $N' = \sum_{ \ell =K_0 }^{K-1} b_{\ell} q_{\ell}$ and $N'' = \sum_{\ell=0}^{K_0 -2} b_{ \ell} q_{\ell}$. By the Denjoy--Koksma inequality with singularity (see \eqref{koksma_bound_sing} in Proposition \ref{denj_koks_log}), we can bound $S_{N'}( f, \alpha) $ by
\begin{align*}
\left | S_{N'}( f, \alpha) \right | \ll \sup_{x \in [0,1) \setminus A_{N'}} | f(x)| \sum_{i=K_0}^{K-1} b_i + N' \left | \int_{A_{N'}} f(x) dx \right | ,
\end{align*}
where we choose $ A_{N'} = \left ( q - \min_{n < q_{K}} \| n \alpha - q \|, q + \min_{n < q_{K}} \| n \alpha - q \| \right ) $. This ensures $ \{ n \alpha -q \} \notin A_{N'} $ and we have
\begin{align}
\notag 
 \sup_{x \in [0,1)\setminus A_{N'}} \left | \log \left ( \| x -q \| \right ) \right | & \leq \left | \log \left ( \min_{n < q_{K}} \| n \alpha - q \| \right ) \right | \\
 \notag 
 & \leq \left | \log \left ( \min_{n < q_{K +1} / s}  \| n \alpha - q \| \right ) \right |  \\
 \label{EqMinqks}
 & \leq \left | \log \left ( \frac{\| q_{K+s+1} \alpha \|}{s} \right ) \right |  \\
 \notag 
 & \ll  \log \left ( q_{K+s+2}  \right ) \\
 \notag 
 & \ll K,
\end{align}
where we have used that $ \log(q_{K}) \ll K$ by \eqref{Eq_size_of_q_k} and we used Proposition \ref{PropDistanceNalphaToq}. A simple calculation reveals
\begin{align*}
    \left | \int_{A_{N'}} f(x) dx \right| & = 2 \left | \min_{n < q_K} \| n \alpha - q\| \left( \log \left( \min_{n <  q_K} \| n \alpha - q\|  \right)   \right) \right | \\
    & \ll \frac{K}{q_K},
\end{align*}
where we used that $ \min_{n < q_K} \| n \alpha - q\| \leq \frac{1}{q_K}$ and $\left | \log \left( \min_{n \leq q_K} \| n \alpha - q \|  \right) \right | \ll K  $, as shown before. In total, we get
\begin{align*}
\left | S_{N'}( f, \alpha) \right | & \ll K \sum_{i=K_0 }^{K-1} b_i + N' \frac{K}{q_K} \\
& \ll K \sum_{i=K_0 +1}^{K} a_i \\
& \ll K^2 \log K,
\end{align*}
where the estimate in the last line uses \eqref{Eq_trimmed_sum}. Analogously, one obtains the same bound for $S_{N''}(f, \alpha, (N' + b_{K_0 -1}q_{K_0 -1}) \alpha)$, i.e., we get
\begin{equation*}
    \left | S_{N''}(f, \alpha, (N' + b_{K_0 -1}q_{K_0 -1}) \alpha) \right |  \ll K^2 \log K.
\end{equation*}
We are now left with $ S_{b_{K_0-1} q_{K_0-1}}(f, \alpha, N' \alpha ) $, where we will show that 
\begin{equation*}
\left\lvert S_{b_{K_0-1}q_{K_0-1}}(f,\alpha,N' \alpha ) \right\rvert \ll K^2 \log K.
\end{equation*}
By definition, we have
\begin{align*}
S_{b_{K_0-1}q_{K_0-1}}(f,\alpha,N' \alpha ) & = \sum_{n=1}^{b_{K_0 -1}q_{K_0 -1}} \log \left(  \left \| n \alpha - \frac{r}{s} + N' \alpha \right \|  \right) + b_{K_0-1}q_{K_0-1} \\
& = \sum_{b=0}^{b_{K_0 -1} -1} \left( \sum_{n=1}^{q_{K_0 -1}}   \log \left( \left \| n \alpha - \frac{r}{s} + N' \alpha + b q_{K_0 -1} \alpha   \right \|  \right) + q_{K_0-1} \right) \\
& = \sum_{b=0}^{b_{K_0 -1} -1} \left(  \sum_{n=1}^{q_{K_0 -1}}  \log \left( \left \|  n \alpha - \frac{r}{s} + N' \alpha + b (-1)^{K_0-1} \delta_{K_0 -1}   \right \| \right) + q_{K_0-1} \right),
\end{align*}
where we recall that $\delta_{K_0-1} = (-1)^{K_0-1} \left( q_{K_0-1} \alpha - p_{K_0-1} \right)$. 
\par{}
We first assume that $K_0 $ is odd. We can write $ N' \alpha  - \frac{r}{s} = \frac{m}{q_{K_0-1}} + \frac{r'}{q_{K_0-1}}$ where $m \in \mathbb{Z}$ and $0 \leq r' < 1$.
We observe that, for any $0 \leq b \leq b_{K_0-1}$, we have $0 \leq bq_{K_0-1}\delta_{K_0-1} + r' < 2$. For the following analysis, we define the quantity $d_{b} := bq_{K_0-1}\delta_{K_0-1} + r'$ and the sets $B_1, B_2, B_3$ as

\[\begin{split}B_1 &:= \{0 \leq b \leq b_{K_0-1}-1:  d_{b+1} < 1\},\\
B_2 &:= \{0 \leq b \leq b_{K_0-1}-1:  d_b > 1\},\\
B_3 &:= \{0,\ldots,b_{K_0-1}-1 : d_b < 1 < d_{b+1} \}.
\end{split}
\]
Since $d_b$ is irrational for all $0 \leq b \leq b_{K_0-1} -1 $, the sets $B_1, B_2, B_3$ form a partition of $ \{ 0, \ldots, b_{K_0-1} -1 \} $. 
We first assume $b \in B_1$, i.e. $d_{b+1} < 1$. We see that, for all $n=1, \ldots , q_{K_0-1}$, we have
\[\left\{(n + bq_{K_0-1})\alpha + N' \alpha  - \frac{r}{s}\right\}
= \left\{\frac{np_{K_0-1} + m + d_{b} + n\delta_{K_0-1}}{q_{K_0-1}}\right\}.
\]
Since $p_{K_0-1} $ and $ q_{K_0 -1}$ are coprime, the map $j(n) = np_{K_0-1} + m \pmod{q_{K_0-1}}$ is bijective with inverse $n(j)$. Thus, we can introduce the quantities
 \[y_j := \frac{j + d_{b} + n(j)\delta_{K_0-1}}{q_{K_0-1}}, \quad
 j = 0,\ldots, q_{K_0-1}-1.
 \]
Since $  d_{b} + n(j)\delta_{K_0-1} \leq d_{b+1} < 1$ by assumption, it holds that $0 \leq  y_j < 1$ for all $j=0, \ldots, q_{K_0-1} -1$. Further, we have the following equality of sets
\begin{equation*}
 \{ y_j : j=0, \ldots, q_{K_0-1} -1 \} = \left\{ \left\{ \frac{np_{K_0-1} + m + d_{b} + n\delta_{K_0-1}}{q_{K_0-1}} \right\} : n=1, \ldots, q_{K_0-1}  \right\}.
\end{equation*}
The previous arguments reveal that, for $b \in B_1$, we can write 
\begin{align*}
     \sum_{n=1}^{q_{K_0 -1}}   \log \left( \left \|  n \alpha - \frac{r}{s} + N' \alpha + b (-1)^{K_0-1} \delta_{K_0 -1}   \right \| \right) + q_{K_0-1} & = \sum_{j=0}^{q_{K_0-1}-1} \log \| y_j \| + q_{K_0 -1} \\
     & = \sum_{j=0}^{q_{K_0-1}-1} \left ( \log \| y_j \|  -  I_j  \right),
\end{align*}
where we set $I_j:= q_{K_0 -1} \int\limits_{j/q_{K_0-1}}^{(j+1)/q_{K_0-1}} \log(x) \,\mathrm{d}x $ for $j=0, \ldots , q_{K_0 -1}-1$.
In the following, we will compare the value of $\log \lVert y_j \rVert$ to the value of $ I_j$ for all $j=0 , \ldots, q_{K_0 -1}-1$. We start with the case where $1 \leq j \leq \lfloor q_{K_0-1}/2\rfloor - 1$. Then, we have $\lVert y_j \rVert = y_j$ and $ \lVert y_{q_{K_0-1} - j -1} \rVert = 1 - y_{q_{K_0-1} - j -1}$. This leads to
\begin{align*}
\log \lVert y_j\rVert - I_j
& =
\log(y_j) -I_j \\
& =
\log \left(\frac{j + d_{b} + n(j)\delta_{K_0-1}}{j +1}\right) - j\log\left(1 + \frac{1}{j}\right) + 1.
\end{align*}

By the Taylor expansion $\log(1 + x) = x - x^2/2 + O(x^3)$, we have

\[\log \left(\frac{j + d_{b} + n(j)\delta_{K_0-1}}{j +1}\right) - j\log\left(1 + \frac{1}{j}\right) + 1
= \frac{d_{b} + n(j)\delta_{K_0-1}-1/2}{j} + O\left(\frac{1}{j^2}\right).
\]

By the same arguments, we obtain

\[\log \lVert y_{q_{K_0-1}-j-1}\rVert - q_{K_0-1} I_j
= \frac{1 - d_{b} + n(q_{K_0-1}-j-1)\delta_{K_0-1}-1/2}{j} + O\left(\frac{1}{j^2}\right).
\]
So, by combining the two previous estimates, we get

\[
\begin{split}
\left | \log \lVert y_j\rVert + \log \lVert y_{q_{K_0-1}-j-1}\rVert - 2 I_j \right | 
& \leq 
\left\lvert\frac{\left(n(j)-n(q_{K_0-1}-j-1)\right) \delta_{K_0-1}}{j}\right\rvert +  O \left( \frac{1}{j^2} \right)\\
& \leq \frac{1}{j a_{K_0}}+ O\left(\frac{1}{j^2}\right).
\end{split}\]
In the last line, we used the estimate $q_{K_0-1}\delta_{K_0-1} \leq \frac{1}{a_{K_0}}$. It is easy to see that 
\begin{equation*}
\sum_{j= \lfloor q_{K_0-1}/2\rfloor }^{q_{K_0-1} - \lfloor q_{K_0-1}/2\rfloor } \left ( \log( \| y_j \|) - 2 I_j \right)   = O(1),
\end{equation*}
since the number of summands on the left-hand side is bounded by a constant and the $y_j$ are bounded away from $0$ and $1$. 
Thus, we have shown that 
\begin{align*}
\left | \sum_{j=1}^{q_{K_0-1}-2} \log \| y_j \| - I_j  \right | 
&\ll 
\frac{1}{a_{K_0} } \sum_{j=1}^{\lfloor q_{K_0-1}/2\rfloor - 1 } \frac{1}{j} + \sum_{j=1}^{\lfloor q_{K_0-1}/2\rfloor - 1 } \frac{1}{j^2} + O(1) \\
& \ll 
\frac{\log q_{K_0 -1}}{a_{K_0}} + O(1).
\end{align*}

We are now left with the cases $j=0$ and $j=q_{K_0-1} -1$, where we get

\begin{equation*}
\begin{split}
\left\lvert \log \lVert y_0 \rVert + \log \lVert y_{q_{K_0-1} - 1} \rVert 
- 2 I_0\right\rvert &\leq 
\left\lvert \log \lVert y_0 \rVert -  I_0 \right\rvert  + 
\left\lvert  \log \lVert y_{q_{K_0-1} - 1} \rVert 
- I_0 \right\rvert \\
& \leq 
\lvert \log(d_{b} + n(0)\delta_{K_0-1}) \rvert+
\lvert \log(1 - d_{b} - n(q_{K_0-1}-1)\delta_{K_0-1}) \rvert+ 2.
\end{split}
\end{equation*}

We discuss here the first term $ \lvert \log(d_{b} + n(0)\delta_{K_0-1}) \rvert$ in detail, the second term can be treated analogously. Observe that $d_{b} + n(0)\delta_{K_0-1} = q_{K_0-1}\lVert (N' + bq_{K_0 -1} +  n(0))\alpha - \frac{r}{s})\rVert$ by construction of $y_0$.

We claim that there exists at most one $b' \in B_1$ such that
\[ \left \lVert \left (N' + b'q_{K_0 -1} +  n(0) \right ) \alpha - \frac{r}{s}  \right \rVert 
\leq \frac{1}{4q_{K_0}}.\]

Assume to the contrary that there are two integers $b', b'' \in B_1$ with $b' \neq b''$ such that both satisfy this estimate. Then, we get
\begin{align*}
\left \|  \left ( b' - b'' \right) q_{K_0 -1} \alpha  \right \|  \leq \frac{1}{2 q_{K_0}}
\end{align*}
by the triangle inequality, which is an immediate contradiction to the best approximation property of $q_{K_0 -1} $. Thus, for the only possible $b' \in B_1$, we get
\begin{align*}
q_{K_0 -1} \left \lVert \left (N' + b'q_{K_0 -1} +  n(0) \right ) \alpha - \frac{r}{s}  \right \rVert & \geq q_{K_0 -1} \min_{ n < q_{K}} \left \lVert n \alpha - \frac{r}{s}  \right \rVert \\
& \geq \frac{q_{K_0 -1}}{q_{K +s +1}} \\
& \geq \frac{1}{q_{K +s +1}},
\end{align*}
where the estimate in the second last step can be argued analogously to \eqref{EqMinqks}. For all $b \in B_1$ with $b \neq b'$, we have 
\begin{align*}
    q_{K_0 -1} \left \lVert \left (N' + bq_{K_0 -1} +  n(0) \right ) \alpha - \frac{r}{s}  \right \rVert & \geq \frac{q_{K_0 -1}}{4 q_{K_0}} \\
    & \geq \frac{1}{4 ( a_{K_0 } +1)  }.
\end{align*}

Thus, by combining the estimates we obtained, we get
\begin{align*}
\left | \sum_{b \in B_1} \sum_{n=1}^{q_{K_0 -1}}  f(  n \alpha + b q_{K_0 -1} \alpha  + N' \alpha ) \right | & = \left |  \sum_{b \in B_1}  \sum_{j=0}^{q_{K_0 -1}} \left( \log \| y_j \| - I_j \right) \right |  \\
& \leq \left | \sum_{b \in B_1} \sum_{j=1}^{q_{K_0 -2}} \left(  \log \|y_j \| - I_j \right) \right | + \left | \sum_{b \in B_1}  \sum_{j \in \{ 0, q_{K_0 -1}  \} }  \left( \log \| y_j \| - I_j \right) \right |  \\
& \ll \log q_{K_0 -1} a_{K_0} + \left | \sum_{b \in B_1, b \neq b'}  \sum_{j \in \{ 0, q_{K_0 -1}  \} } \left( \log \| y_j \| - I_j \right) \right |   + \left | \sum_{j \in \{ 0, q_{K_0 -1}  \} } \left(  \log \| y_j( b') \|  - I_j \right) \right | \\
& \ll \log q_{K_0 -1} a_{K_0} +  \sum_{b \in B_1, b\neq b'} \log a_{K_0} + \log q_{K + s + 1} \\
& \ll a_{K_0} (\log a_{K_0} + \log q_{K_0 -1}) \\
& \ll K^2 \log (K).
\end{align*}
Here we used that $ | B_1| \leq b_{K_0 -1 } \leq a_{K_0}$, $ \log q_{K +s +1} \ll K$ by \eqref{Eq_size_of_q_k} and by \eqref{Eqbernstein}, $a_{K_0} \leq K^2$ if $K$ is sufficiently large. For the set $B_2$, a similar analysis leads to the same asymptotic bound, i.e., we get
\begin{equation*}
    \left | \sum_{b \in B_2} \sum_{n=1}^{q_{K_0 -1}}  f(  n \alpha + b q_{K_0 -1} \alpha  + N' \alpha ) \right |  \ll K^2 \log K.
\end{equation*}
The set $B_3$ contains at most $1$ element $\bar{b} \in \{0 , \ldots , b_{K_0 -1} -1 \} $ and thus, we can write
\begin{align*}
\left | \sum_{b \in B_3} \sum_{n=1}^{q_{K_0 -1}}  f(  n \alpha + b q_{K_0 -1} \alpha  + N' \alpha ) \right | 
& \leq 
\left | \sum_{n=1}^{q_{K_0 -1}}  \log \left ( \left  \|    n \alpha + x \right \|  \right ) \right |, 
\end{align*}
where $x := \bar{b} q_{K_0 -1} \alpha  + N' \alpha$. Applying the Denjoy--Koksma inequality with singularity in the form of \eqref{koksma_bound_sing}, we obtain
\begin{equation*}
 \left | \sum_{n=1}^{q_{K_0 -1}}  \log \left ( \left \|    n \alpha + x  \right \| \right ) \right | \ll K^2 \log K ,
\end{equation*}
as we did for $S_{N'}(f ,\alpha)$ at the beginning of this proof. Combining the estimates for $B_1, B_2, B_3$, we can deduce that 
\begin{equation*}
\left | S_{b_{K_0 -1}q_{K_0 -1}} (f, \alpha, N' \alpha) \right |  \ll K^2 \log K,
\end{equation*}
which finishes the proof for odd $K_0$. The case where $K_0$ is even can be handled under minor modifications. In total, we have shown that, for $ q_{K-1} \leq N < q_K$ 
\begin{align*}
 \vert S_N(f, \alpha) \rvert   & \ll  K^2 \log K \\
 & \ll (\log N)^2 \log \log N.
\end{align*}
\end{proof}

\subsection{Proof of Theorem \ref{logthm} and Theorem \ref{koks_sharp_log}}
We start by proving (ii) of Theorem \ref{logthm}, where the Birkhoff sum $S_N(f, \alpha, q)$ is generated by a function $f : \R \rightarrow \R$ with symmetric logarithmic singularity at a rational, i.e. $f$ is of the form $ f(x) = c \log \| x - x_1 \|  + t(x)$, where $c \neq  0$ ,$x_1 = \frac{r}{s} \in [0,1) \cap \Q $ and $t$ is of bounded variation. 
Without loss of generality, we can assume that $q=0$ because otherwise we just set $ \tilde{x}_1 := x_1 - q$.  
Let $N \in \N$ with Ostrowski expansion $N = \sum_{i =1}^{K-1} b_i q_i$.
By the Denjoy--Koksma inequality, we obtain
$S_N(t,\alpha) \leq \Var(t) \sum_{i=1}^{K}a_i \ll K^2$.
Thus by Lemma \ref{final_symm_log_estim}, we get
\begin{equation*}
    \left | S_N( f, \alpha)  \right | \ll ( \log N)^2 \log \log  N
\end{equation*}
implying statement (ii) of Theorem \ref{logthm}.
\par{}
Next, we consider the asymmetric case, i.e., where the Birkhoff sum is generated by a function $f$ of the form $f(x)= \underbrace{c_1 \log( \{ x-x_1\} )}_{=: f_1(x)}+ \underbrace{c_2 \log \| x - x_1 \| + t(x)}_{=: f_2(x)} $, where $c_1, c_2 \in \R$ with $c_1 \neq 0$ and $x_1 = \frac{r}{s} \in [0, 1) \cap \Q $ and $t$ is of bounded variation. Again, without loss of generality, it suffices to consider the case where $q=0$.
\par{}
 We start with the case where $\sum_{k=1}^{\infty} \frac{1}{\psi(k)} = \infty$ where we show that the set $ \left \{ N \in \N : S_N(f, \alpha) \geq \log N  \psi( \log(N)) \right\}$ has upper density $1$. Without loss of generality, we can assume that $ \lim_{K \rightarrow \infty } \frac{\psi(K)}{K \log K} = \infty $ since the result then follows also for slower growing $\psi$. Note that $S_N(f, \alpha) = S_N(f_1, \alpha) + S_N(f_2, \alpha)$. By the first part of this proof, we have $|S_N(f_2, \alpha )| \ll (\log N)^2 \log \log N $ and since $ \log N \psi ( \log N )$ dominates $ (\log N) ^2 \log \log N$, it suffices to show that $ \{ N \in \N : S_N( f_1 , \alpha ) \geq \log N \psi ( \log N) \} $ has upper density $1$.
\par{}
First, assume that $c_1 > 0$. By Lemma \ref{LemInfiniteSets}, for almost every $\alpha \in [0,1)$, the set 
\begin{align*}
&  \left \{ K \in \N \Bigg |  K \equiv 0 \Mod{2} , q_{K-1}  \equiv  0 \Mod{s}, \tilde{\psi}(K) < a_K < K^2, \sum_{i=1}^{K-1}a_i \leq 2  K \log K  \right \}
\end{align*}
has infinite cardinality, where $\tilde{\psi}(k) = C_1 \psi( C_2 k)$ with $C_1, C_2 > 0$ specified later. Denote by $(k_j)_{j \in \mathbb{N}}$ the increasing sequence of integers such that the above holds. Define $M_j := \left \lfloor \frac{a_{k_j}}{4s} q_{k_j-1} \right \rfloor $ and note that, for any $1 \leq N \leq M_j$, we can write $N= b_{k_j-1}(N) q_{k_j-1} + N'$ where $N' < q_{k_j-1}$. Moreover, for $\delta > 0$, let $A_j^{\delta}:= \left \{ 1 \leq N \leq M_j : \delta < \frac{b_{k_j-1}(N)}{a_{k_j}} \leq 1 \right\}$. We note that for any $N \in A_j^{\delta}$, the assumptions of Lemma \ref{LemMainTermEstRationalShift} are satisfied, and hence
\begin{equation*}
    S_{b_{k_j-1} q_{k_j -1}}(f_1, \alpha)  \geq c(\delta) a_{k_j} \log( q_{k_j-1}),
\end{equation*}
where $c(\delta)$ is a positive constant only depending on $\delta$. Moreover, by Proposition \ref{PropErrorEstimateLogSingularityRationalShift}, we have
\begin{equation*}
    |S_{N'}(f_1, \alpha, b_{k_j-1}q_{k_j -1} \alpha)| \leq D  \log( q_{k_j +1}) \sum_{\ell=1}^{k_j -1}a_{\ell}, 
\end{equation*}
where $D$ is a positive absolute constant. For $j$ sufficiently large, this leads to

\begin{equation}\label{f1_estim}
\begin{split}
S_N(f_1,\alpha) &= S_{b_{k_j-1} q_{k_j -1}}(f_1,\alpha) + S_{N'}(f_1,\alpha, b_{k_j-1}q_{k_j -1} \alpha) \\
& \geq  c( \delta) a_{k_j} \log( q_{k_j-1}) - D \sum_{\ell=1}^{k_j-1} a_{\ell} \\
& \geq  \frac{c( \delta)}{2} a_{k_j} \log( q_{k_{j} -1}),
\end{split}
\end{equation}
where the inequality in the last line holds since $a_{k_j}$ dominates $ \sum_{i=1}^{k_j -1} a_i $ by construction. Moreover, we have used that $c_1$ in the definition of $f_1$ is positive by assumption and we employed $ \log q_{k_j -1} \gg \log  q_{k_j+1}  $ which holds by \eqref{Eq_size_of_q_k}. 
We note that $\lim_{\delta \rightarrow 0 } | A_j^{ \delta} | = M_j $ and thus, fixing $\varepsilon> 0$, we can choose $\delta > 0$ such that $ \frac{| A_j^{ \delta} |}{M_j} \geq 1- \varepsilon$ for all sufficiently large $j$. Let $C_1, C_2 > 0$ such that, if $ a_{k_j} \geq \tilde{\psi}(k_j) = C_1 \psi( C_2 k_j)$, it follows that $ \frac{c( \delta)}{2} a_{k_j} \log( q_{k_{j} -1}) \geq \log N \psi( \log N)$ for all $N \in A_j^{\delta} $. We note that $C_1, C_2$ only depend on $ \delta > 0$, since $k_j$ is chosen such that $ \sum_{i=1}^{k_j -1 } a_i \leq 2 k_j \log (k_j) = o( \psi( k_j) ) $. This yields
\begin{equation}
\begin{split}
\label{EqUpDens1log}
\frac{ \left | \left \{  1 \leq N \leq M_j : S_N( f_1, \alpha) \geq \log N  \psi( \log N )  \right \}  \right | }{M_j} & \geq  \frac{ \left | \left \{  N \in A_j^{\delta} : \frac{c( \delta)}{2} a_{k_j} \log( q_{k_{j} -1}) \geq \log N  \psi( \log N ) \right \}   \right |}{M_j} \\
& \geq \frac{ \left | \left \{  N \in A_j^{\delta} :  a_{k_j }   \geq \tilde{\psi}( k_j  )  \right \}  \right |}{M_j} \\
& = \frac{|A_j^{ \delta} | }{M_j} \geq 1- \varepsilon.
\end{split}
\end{equation}

By taking the liminf as $j \rightarrow \infty $ and letting $\varepsilon\rightarrow 0$, we get
\begin{equation*}
\liminf_{j \rightarrow \infty} \frac{ \left | \left \{ 1 \leq N \leq M_j : S_N( f_1, \alpha) \geq \log N  \psi( \log N ) \right \} \right |}{M_j}  = 1.
\end{equation*}
The case where $c_1$ from the definition of $f_1$ is negative can be handled under minor modifications. Analogously, one can show that the set $ \left \{ N \in \N : S_N(f, \alpha) \leq - \log(N) \psi( \log(N)) \right\} $ has upper density $1$.
\par{}
The case where $\sum_{k=1}^{\infty} \frac{1}{\psi(k)} < \infty$ can be treated analogously to the proof of Theorem \ref{ThmUpperDensDiscFunctions} by using the Denjoy--Koksma inequality with singularity (Proposition \ref{denj_koks_log}).
\par{}
To prove Theorem \ref{koks_sharp_log}, we start with a few general estimates. Observe that, for $x_1 = \frac{r}{s}$ and $q_{K-1} < N < q_K$, we have

\begin{align*} 
 \sup_{x \in [0,1)\setminus A_{N}} \left | f_1(x) + f_2(x) \right | & \ll \left | \log \left ( \min_{n < q_{K}} \| n \alpha - x_1 \| \right ) \right | \\
 & \leq \left | \log \left ( \min_{n < q_{K +1} / s}  \| n \alpha - x_1 \| \right ) \right |  \\
 & \leq \left | \log \left ( \frac{\| q_{K+s+1} \alpha \|}{s} \right ) \right |  \\
 & \ll  \log \left ( q_{K+s+2}  \right ) \\ 
 & \ll \log q_K,
\end{align*}
where we have used \eqref{Eq_size_of_q_k} and Proposition \ref{PropDistanceNalphaToq}. This shows that

\[\sup_{x \in [0,1)\setminus A_N}\lvert f(x)\rvert  \sum_{i =1}^{K}a_i
\ll \log q_K  \sum_{i =1}^{K}a_i.
\]

Further, we get
\begin{align*}
    N\left | \int_{A_{N}} f(x) dx \right| & \ll N \left | \min_{n < q_K} \| n \alpha - x_1 \| \left( \log \left( \min_{n <  q_K} \| n \alpha - x_1 \|  \right) -1  \right) \right | \\
    & \ll \frac{NK}{q_K} 
    \\& \ll \log q_K \\
    &= o(a_K),
\end{align*}
where we used that $ \min_{n < q_K} \| n \alpha - x_1 \| \leq \frac{1}{q_K}$ and $\left | \log \left( \min_{n \leq q_K} \| n \alpha - x_1 \|  \right) \right | \ll K  $ by the previous calculation. Now let $0 < r < 1$. We will show that there exists a constant $C(r) > 0$ such that the set 
\begin{equation*}
\left \{ N \in \N \ | \ S_N( f , \alpha, q ) \geq C(r)  \sup_{x \in [0,1)\setminus A_N}\lvert f(x)\rvert  \sum_{i =1}^{K(N)}a_i
+ N \left\lvert\int_{A_N} f(x)\,\mathrm{d}x\right\rvert \right\}
\end{equation*}
has upper density of at least $r$. To that end, let $(k_j)_{j \in \N}$ be the sequence of integers from the first part of this proof. Let $ A_j^{\delta'} = \left \{ 1 \leq N \leq M_j : \delta' < \frac{b_{k_j -1}(N)}{a_{k_j}} \leq 1 \right \}$ and $M_j = \left \lfloor \frac{a_{k_j}}{4s} q_{k_j -1} \right  \rfloor$ be as in the first part of this proof, where we choose $ \delta' = \delta'(r) > 0$ sufficiently small such that $ \frac{| A_j^{ \delta'} |}{M_j} \geq r$. Using \eqref{f1_estim} we get, for $N \in A_{j}^{\delta'} $, 

\[S_N(f_1,\alpha) \geq \frac{c(\delta')}{2}a_{k_j} \log q_{k_j-1}.\]

Since $\left | S_N( f_2, \alpha)  \right | \ll k_j^2 \log k_j = o(a_{k_j}\log q_{k_j-1})$, we obtain

\[S_N(f,\alpha) \geq \frac{c(\delta')}{4}a_{k_j}\log_{q_{k_j-1}} \gg \sup_{x \in [0,1)\setminus A_N}\lvert f(x)\rvert  \sum_{i =1}^{k_j}a_i
+ N \left\lvert\int_{A_N} f(x)\,\mathrm{d}x\right\rvert.\]

Thus, there exists a $C( \delta')= C(r)$ such that

\[S_N(f,\alpha) \geq  C(r) \sup_{x \in [0,1)\setminus A_N}\lvert f(x)\rvert  \sum_{i =1}^{k_j}a_i
+ N \left\lvert\int_{A_N} f(x)\,\mathrm{d}x\right\rvert\]

holds for all $N \in A_j^{\delta'}$. Since $\frac{|A_j^{\delta'}|}{M_j} \geq r$, the statement of Theorem \ref{koks_sharp_log} follows by an analogous argument as in \eqref{EqUpDens1log}. The second case in Theorem \ref{koks_sharp_log}, where we deal with the set
\[
\left \{ N \in \N \ | \ S_N( f , \alpha ,q) \leq - C(r)\sup_{x \in [0,1)\setminus A_N}\lvert f(x)\rvert  \sum_{i =1}^{K(N)}a_i
- N \left\lvert\int_{A_N} f(x) \,\mathrm{d}x\right\rvert \right\} \]
can be handled similarly.

\section*{Appendix}

In the appendix, we provide the proofs of Proposition \ref{upp_dens_wo_cond} and Proposition \ref{counterex_density1}.
\begin{proof}[Proof of Proposition \ref{upp_dens_wo_cond}]
One can easily check that the condition of $f$ being as in \eqref{counterexample_form} is invariant under rational translation, thus it suffices to prove the statement for $ q = 0$. We show that the set $ \{ N \in \N : S_N(f, \alpha) \geq \psi(\log N) \}$ has upper density $1$. We can write
\begin{equation*}
    S_N(f, \alpha ) = S_{b_{K-1} q_{K-1}}(f, \alpha) + S_{N'}(f, \alpha , b_{K-1}q_{K-1} \alpha),
\end{equation*}
where $N = b_{K-1} q_{K-1} + N'$ with $N' < q_{K-1}$. By the Denjoy--Koksma inequality in the form of \eqref{koksma_bound}, we have that 

\begin{equation*}
S_{N'}(f, \alpha , b_{K-1}q_{K-1} \alpha) \ll \sum_{i=1}^{K-1} a_i.
\end{equation*}

We analyze the dominating term $ S_{b_{K-1} q_{K-1}}(f, \alpha) $ for certain $b_{K-1}$. By Lemma \ref{prop_telescopic_part}, we have
\begin{equation}
\label{counterex_analysis}
\begin{split}
S_{b_{K-1} q_{K-1}}(f, \alpha) & = b_{K-1} \Bigg( \left\{ \frac{q_{K-1} r_1}{s_1}\right\}
+ (-1)^{K-1}\mathds{1}_{\{s_1 \mid q_{K-1}\}}
 - 2 \left\{ \frac{q_{K-1} r_2}{s_2} \right\} - 2(-1)^{K-1} \mathds{1}_{ \{ s_2 \mid q_{K-1} \} } \\
 & \qquad + \left\{\frac{q_{K-1}r_3}{s_3}\right\}+(-1)^{K-1}\mathds{1}_{ \{ s_3 \mid q_{K-1} \} }  \Bigg),
 \end{split}
\end{equation}
provided $ b_{K-1} \leq \frac{1}{2 s_1 s_2 s_3} a_K$ and $a_K$ is sufficiently large. By Lemma \ref{LemInfiniteSets}, for almost all $\alpha \in [0,1)$ and for any pair of integers $(a,b)$, the sets
\begin{equation*}
\begin{split}
    & \left \{ K \in \N \ | \ a_{K} > \psi(K) , 2 \nmid (K -1)  , q_{K -1} \equiv  a \pmod{b} , \sum_{i=1}^{K-1} a_i \leq 2 K \log K \right\}, \\
    & \left \{ K \in \N \ | \ a_{K} > \psi(K) , 2 \mid (K-1) , q_{K-1}  \equiv  a \pmod{b} , \sum_{i=1}^{K-1} a_i \leq 2 K \log K \right\}
\end{split} 
\end{equation*}
both contain infinitely many integers $K$. In the upcoming case distinction we will consider different choices of $a$ and $b$.
\par{}
Case 1: $s_1 \nmid s_2$: The congruence relation $q_{K-1} \equiv s_2 \mod{s_1 s_2}$ ensures that $ s_2 \mid q_{K-1} $ and $ s_1 \nmid q_{K-1}$ since $s_1 \nmid s_2$ by assumption. Thus, \eqref{counterex_analysis} gives us
\begin{align*}
S_{ b_{K-1} q_{K-1}}(f,\alpha)  &= b_{K-1} \Bigg( \underbrace{\left\{\frac{q_{K-1}r_1}{s_1}\right\}}_{ \geq 0} + 2 +  \underbrace{\left\{\frac{q_{K-1} r_3}{s_3}\right\} - \mathds{1}_{\{s_3 \mid q_{K-1} \}}}_{\geq -1} \Bigg) \\
& \geq b_{K-1}.
\end{align*}

Case 2: We assume $s_3 \nmid s_2$. Under the congruence conditions $q_{K-1} \equiv s_2 \mod{s_2 s_3}$ and $2 \nmid (K-1)$, we obtain 
\begin{equation*}
\begin{split}
S_{b_{K-1} q_{K-1}}(f, \alpha)  & = b_{K-1} \Bigg( \left\{ \frac{q_{K-1} r_1}{s_1}\right\}
- \mathds{1}_{\{s_1 \mid q_{K-1}\}}
  + 2  + \left\{\frac{q_{K-1}r_3}{s_3}\right\}  \Bigg) \\
  & \geq b_{K-1}.
 \end{split}
\end{equation*}

Case 3: If $s = s_1 = s_2 = s_3$, then we use the congruence conditions $2 \nmid (K-1)$ and $q_{K-1} \equiv (r_2)^{-1} \pmod{s}$
to show
\begin{align*}
S_{ b_{K-1}q_{K-1}}(f,\alpha) & = b_{K-1} \Bigg( \left\{ \frac{q_{K-1} r_1}{s}\right\}
 - 2 \left\{ \frac{q_{K-1} r_2}{s} \right\} + \left\{\frac{q_{K-1}r_3}{s}\right\}  \Bigg) \\ 
& = b_{K-1} \left(  \left\{ \frac{q_{K-1} r_1}{s}\right\}
- \frac{2}{s} +  \left\{\frac{q_{K-1} r_3}{s} \right\} \right) \\
& \geq \frac{2}{s} b_{K-1}.
\end{align*}

The latter inequality holds since $r_1,r_3$ are distinct from $r_2$, and thus $\left\{\frac{q_{K-1} r_1}{s}\right\} + \left\{\frac{q_{K-1} r_3}{s}\right\} \geq \frac{4}{s}$.

Case 4: $s_1 \mid s_2$ and $ s_3\mid s_2$, but $s_1 \neq s_2 $ or $s_3 \neq s_2$: Without loss of generality, we assume $s_1 \neq s_2$. We use the congruence conditions $2 \mid (K-1) $ and $  q_{K-1} \equiv a s_1 \pmod{s_2}$ with $ a \equiv r_2^{-1} \mod{ \frac{s_2}{s_1}} $ (which is possible since $ \gcd (r_2, s_2) = 1$). We obtain
\begin{align*}
  S_{b_{K-1}q_{K-1}}(f,\alpha) & =  b_{K-1} \Bigg(  1
 - 2 \left\{ \frac{q_{K-1} r_2}{s_2} \right\} + \left\{\frac{q_{K-1}r_3}{s_3}\right\}+ \mathds{1}_{ \{ s_3 \mid q_{K-1} \} }  \Bigg) \\
  & = b_{K-1} \Bigg(  1 - 2\left\{\frac{ s_1}{s_2}\right\} + \left\{\frac{q_{K-1} r_3}{s_3}\right\}+ \mathds{1}_{\{s_3 \mid q_{K-1} \}} \Bigg) \\
  & \geq b_{K-1} \Bigg(  \left\{\frac{q_{K-1} r_3}{s_3}\right\}+ \mathds{1}_{\{s_3 \mid q_{K-1} \}} \Bigg) \\
  & \geq \frac{1}{s_3} b_{K-1}.
\end{align*}

The second last inequality follows from the congruence relation $   q_{K-1} \equiv a s_1 \pmod{s_2} $ and $ 2  \left\{\frac{ s_1}{s_2}\right\} \leq 1 $, where the latter holds since $s_1 \mid s_2$ and $s_1 \neq s_2$. In the last line we used that if $ \mathds{1}_{\{s_3 \mid q_{K-1} \}} = 0 $, then $  \left\{\frac{q_{K-1} r_3}{s_3}\right\} \geq \frac{1}{s_3}$.

Thus, in either case, there exists a constant $ C > 0 $ such that, for $N \in \N$ with $b_{K-1} \leq \frac{1}{2 s_1 s_2 s_3} a_K$, we have
\begin{equation*}
    S_N( f, \alpha) \geq C  b_{K-1} +  O \left( \sum_{i=1}^{K-1} a_i \right).
\end{equation*}
The remaining part of the proof can be argued in the same way as it is done in the proof of Theorem \ref{ThmUpperDensDiscFunctionsRefined}. The set $ \{ N \in \N : S_N(f, \alpha) \leq - \psi(\log N) \}$ can be handled analogously.
\end{proof}

\begin{proof}[Proof of Proposition \ref{counterex_density1}]
We will show that for almost every $\alpha \in [0,1)$, there exists a $ \delta > 0$ with
\[
\liminf_{M \rightarrow \infty} \frac{\left | \{1 \leq N \leq M: \lvert S_N(f,\alpha) \rvert \ll \log N \log \log N \} \right | }{M}
\geq \delta.
\]
By choosing $\psi(k) := k \log k \log \log (k +10)$, this implies that 
\begin{equation*}
\limsup_{M \rightarrow \infty} \frac{\left | \{1 \leq N \leq M: \lvert S_N(f,\alpha) \rvert \geq  \psi(\log N) \} \right | }{M}
\leq 1 -\delta.
\end{equation*}
Fixing $M \in \mathbb{N}$, there is exactly one $K \in \mathbb{N}$ such that $q_{K-1} \leq M < q_{K}$.
Let $K_0 = \argmax_{k \leq K}a_k$ (if the maximum is not unique, we can choose an arbitrary one among the maximizers). We define
\[A_{M}^{\delta} := \left\{ \sqrt{q_{K-1}} \leq N \leq M:   b_{K_0-1}(N) \leq  \delta   a_{K_0} \right\},\]
where $\delta > 0$ is a small constant specified later. In the following, we will show that for any $N \in A_M^{\delta}$, we have $\lvert S_N(f,\alpha)\rvert \ll K \log K$.
Writing $N = \sum_{\ell=0}^{K-1} b_{\ell}q_{\ell}$ in its Ostrowski expansion, we obtain the decomposition 

\[S_N(f,\alpha) = S_{N'}( f , \alpha) +  S_{b_{K_0-1} q_{K_0-1}}(f, \alpha, N' \alpha ) + S_{N''}(f, \alpha, (N' + b_{K_0 -1}q_{K_0 -1}) \alpha),
\]

with $N' = \sum_{ \ell =K_0 }^{K-1} b_{\ell} q_{\ell}$ and $N'' = \sum_{\ell=0}^{K_0 -2} b_{ \ell} q_{\ell}$. By the Denjoy--Koksma inequality \eqref{koksma_bound}, we can bound $S_{N'}( f, \alpha) $ by
\begin{align*}
\left | S_{N'}( f, \alpha) \right | 
& \ll  \sum_{i=K_0+1}^{K} a_i \\
& \ll K \log K,
\end{align*}
where we used \eqref{Eq_trimmed_sum} in the second line. Analogously, one obtains the same bound for $S_{N''}(f, \alpha, (N' + b_{K_0 -1}q_{K_0 -1}) \alpha)$, i.e., we get
\begin{equation*}
    \left | S_{N''}(f, \alpha, (N' + b_{K_0 -1}q_{K_0 -1}) \alpha) \right |  \ll K \log K.
\end{equation*}

 We now turn our attention to $ S_{b_{K_0-1} q_{K_0-1}}(f, \alpha, N' \alpha )$, where we will show $ S_{b_{K_0-1} q_{K_0-1}}(f, \alpha, N' \alpha ) = 0$.
Indeed, an analogous analysis to the proof of Lemma \ref{prop_telescopic_part} shows that there exists a $ \delta  > 0$ such that, for any $ b_{K_0 -1} \leq \delta  a_{k_0} $, it holds

\[S_{b_{K_0 -1} q_{K_0 -1}}(f,\alpha)
= \left(\left\{\frac{ q_{K_0 -1} u}{w}\right\} + \left\{\frac{q_{K_0 -1}(1-u)}{w}\right\}\right)
     - \left(\left\{\frac{q_{K_0 -1}v}{w}\right\} + \left\{\frac{q_{K_0 -1}(1-v)}{w}\right\}\right).
\]

Regardless of the congruence class of $ q_{K_0 -1}$  modulo $w$, the expression above equals $0$. In total, we have shown that, for all $N \in A_M^{\delta}$, we get the asymptotic bound
\begin{equation*}
    \left | S_N(f, \alpha) \right | \ll K \log K \ll \log N \log \log N,
\end{equation*}
where the last estimate uses that $N \geq \sqrt{q_{K-1}}$, which holds by the definition of $A_M^{\delta}$. This leads to
\begin{align*}
\liminf_{M \rightarrow \infty} \frac{\left | \{1 \leq N \leq M: \lvert S_N(f,\alpha) \rvert \ll \log N \log \log N \} \right | }{M}
& \geq  \liminf_{M \rightarrow \infty} \frac{\left | \{ N \in A_M^{\delta} : \lvert S_N(f,\alpha) \rvert \ll \log N \log \log N \} \right | }{M}  \\
& \geq \liminf_{M \rightarrow \infty} \frac{ | A_M^{\delta}| }{M} \\
& \geq \delta.
\end{align*}
This finishes the proof.
\end{proof}

\subsection*{Acknowledgements} 
We would like to thank Bence Borda for many valuable discussions. LF and MH were supported by the Austrian Science Fund (FWF) Project P 35322 \textit{Zufall und Determinismus in Analysis und Zahlentheorie}.

\author{Lorenz Fr\"uhwirth}
{\footnotesize

Graz University of Technology

Steyrergasse 30, 8010 Graz, Austria

Email: \texttt{fruehwirth@math.tugraz.at}

}
\vspace{5mm}

\author{Manuel Hauke}
{\footnotesize 

University of York

Department of Mathematics

YO10 5DD York, United Kingdom

Email: \texttt{hauke@math.tugraz.at; manuel.hauke@york.ac.uk}}

\end{document}